\newtheorem{theorem}{Theorem}[section]
\newtheorem*{theorem*}{Theorem}
\newtheorem{proposition}[theorem]{Proposition}
\newtheorem{corollary}[theorem]{Corollary}
\newtheorem*{corollary*}{Corollary}
\newtheorem{definition}[theorem]{Definition}
\numberwithin{equation}{section}
\title{Entropy, Minimal Surfaces and negatively curved manifolds}
\author{Andrew Sanders}
\begin{document}

\address{Department of Mathematics, Statistics and Computer Science, University of Illinois at Chicago, Chicago, IL 60607 USA}
\email{andysan@uic.edu}
\thanks{Sanders gratefully acknowledges partial support from the National Science Foundation Postdoctoral Research Fellowship and from U.S. National Science Foundation grants DMS 1107452, 1107263, 1107367 "RNMS: GEometric structures And Representation varieties" (the GEAR Network).}

\keywords{Minimal surfaces, quasi-Fuchsian groups, negative curvature, convex-cocompact surface groups, Hausdorff dimension of limit sets, topological entropy, geodesic flows.}

\date{October 23,2013}

\subjclass[2010]{Primary: 53A10 (Minimal surfaces), 30F40 (Kleinian groups), 37C35 (Orbit growth); Secondary: 53C24 (Rigidity results), 28D20 (Entropy and other invariants), 30F60 (Teichm\"{u}ller theory),  .}

\begin{abstract}
In \cite{TAU04}, Taubes introduced the space of minimal hyperbolic germs with elements consisting of the first and second fundamental form of an equivariant immersed minimal disk in hyperbolic 3-space.  Herein, we initiate a further study of this 
space by studying the behavior of a dynamically defined function which records the entropy of the geodesic flow on the associated Riemannian surface.  We provide a useful estimate on this function which, in particular, yields a new proof of Bowen's theorem on the rigidity of the Hausdorff dimension of the limit set of quasi-Fuchsian groups.  These follow from new lower bounds on the Hausdorff dimension of the limit set which allow us to give a quantitative version of Bowen's rigidity theorem.  To demonstrate the strength of the techniques, these results are generalized to convex-cocompact surface groups acting on $n$-dimensional CAT$(-1)$ Riemannian manifolds.
\end{abstract}
\maketitle

\section{Introduction}

Given a convex-cocompact hyperbolic 3-manifold $M$ and a $\pi_1$-injective mapping $f:\Sigma\rightarrow M$ of a closed surface $\Sigma$ into $M,$ the general existence and regularity theory developed by Meeks-Simon-Yau \cite{MSY82}, Sacks-Uhlenbeck \cite{SU82}, Freedman-Hass-Scott \cite{FHS83} and Osserman-Gulliver \cite{GUL77} furnishes the existence of an immersed minimal surface $\Sigma\rightarrow M$ in the homotopy class of $f$ which minimizes area among all maps in the homotopy class.  Motivated by this proliferation of closed minimal surfaces in hyperbolic 3-manifolds, Taubes \cite{TAU04} constructed the space of minimal hyperbolic germs $\mathcal{H}$ which is a deformation space whose typical element consists of a Riemannian metric and symmetric 2-tensor $(g,B)$ which together are the induced metric and second fundamental form of a minimal immersion of $\Sigma$ into a potentially incomplete hyperbolic 3-manifold.  The present paper is the beginning of a deeper investigation of the space $\mathcal{H}$ and its relationship to the $\text{PSL}(2,\mathbb{C})$-character variety $\mathcal{R}(\pi_1(\Sigma),\text{PSL}(2,\mathbb{C})).$

We begin $\S$\ref{sec: entropy} with a study of a dynamically defined function on the space of minimal hyperbolic germs $\mathcal{H}.$  A pair $(g,B)\in\mathcal{H}$ satisfies a trio of equations, the most important of which for us is the \textit{Gauss} equation:
\begin{align}
K_g=-1-\frac{1}{2}\lVert B\rVert_g^2.
\end{align}
Here $K_g$ is the sectional curavture of the metric $g.$  In particular, all elements of the space $\mathcal{H}$ are Riemannian surfaces whose sectional curvature is bounded above by $-1.$  We define a function,
\begin{align}
E:\mathcal{H}\rightarrow \mathbb{R},
\end{align}
which records the topological entropy of the geodesic flow on the unit tangent bundle of the Riemannian surface $(\Sigma,g).$  Our analysis shows that $E$ is greater than or equal to $1,$ and the subspace along which it is equal to $1$ is precisely the Fuchsian space $\mathcal{F}$ of constant negative curvature metrics on $\Sigma.$  The (non-Riemannian) hessian of $E$ along $\mathcal{F}$ yields a well-defined non-negative symmetric bilinear form on the tangent space to $\mathcal{F}$ which is shown to be bounded below by the Weil-Petersson metric.  Hence, the Hessian of $E$ yields a metric on $\mathcal{F}$ which is invariant under the mapping class group of $\Sigma.$  We also identify an open subset of  minimal hyperbolic germs along which $E$ has no critical points; the almost-Fuchsian germs $\mathcal{AF}\subset \mathcal{H}$ defined by the condition that $\lVert B \rVert_g^2<2.$  These results should be compared to the analogous study of the behavior of the Hausdorff dimension of the limit set defined on the space of quasi-Fuchsian representations, see \cite{BT08}, \cite{BRI10} and \cite{McM08}.  Indeed, general theory identifies the function $E$ as recording the Hausdorff dimension of the Gromov boundary $\partial_{\infty}(\widetilde{\Sigma})$ computed in the Gromov metric.

Next, in $\S$\ref{sec: hyp manifolds} and $\S$\ref{sec: limit set} we turn to a study of the interaction between the intrinsic dynamics induced on $\Sigma$ by a pair $(g,B)\in\mathcal{H}$ with the dynamics of a hyperbolic 3-manifold $M$ in which $(\Sigma,g)$ appears as an isometrically immersed minimal surface with second fundamental form $B.$  The central result is a new lower bound on the Hausdorff dimension of the limit set of a quasi-Fuchsian group:
\begin{theorem*}
Let $\Gamma$ be a quasi-Fuchsian group and $\Sigma \rightarrow \Gamma\backslash \mathbb{H}^3$ a $\pi_1$-injective minimal surface with induced metric $g$ and second fundamental form $B.$  Then,
\begin{align}
\frac{1}{\text{\textnormal{Vol}(g)}}\int_{\Sigma}\sqrt{1+\frac{1}{2}\lVert B\rVert_{g}^2}\ dV_{g}\leq \text{\textnormal{H}}.\text{\textnormal{dim}}(\Lambda_{\Gamma}),
\end{align}
where $dV_g$ is the volume form of $g$ and $ \text{\textnormal{H}}.\text{\textnormal{dim}}(\Lambda_{\Gamma})$ is the Hausdorff dimension of the limit set of $\Gamma.$  Furthermore, equality holds if and only if $\Gamma$ is Fuchsian.
\end{theorem*}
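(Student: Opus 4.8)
The plan is to realize the right-hand side as a critical exponent of $\Gamma$, to compare it with the critical exponent of the same group acting on the universal cover of the minimal surface, and then to invoke the entropy estimate of $\S\ref{sec: entropy}$. Since $\Gamma$ is quasi-Fuchsian it is convex-cocompact, so Sullivan's theorem identifies $\text{H.dim}(\Lambda_\Gamma)$ with the critical exponent $\delta_{\mathbb{H}^3}(\Gamma)$, i.e. the abscissa of convergence of $\sum_{\gamma\in\Gamma}e^{-s\,d_{\mathbb{H}^3}(o,\gamma o)}$ for any $o\in\mathbb{H}^3$.

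Next I would lift the minimal immersion to a $\Gamma$-equivariant minimal immersion $\iota\colon(\widetilde{\Sigma},g)\to\mathbb{H}^3$, where $g$ now denotes the pulled-back metric; it is complete since $\Sigma$ is closed and has curvature $\le-1$ by the Gauss equation, so $(\widetilde{\Sigma},g)$ is a Hadamard manifold, indeed CAT$(-1)$. An isometric immersion does not increase the length of curves, so $d_{\mathbb{H}^3}(\iota x,\iota y)\le d_g(x,y)$ for all $x,y\in\widetilde{\Sigma}$; comparing Poincar\'e series term by term and using $\Gamma$-equivariance, the critical exponent $\delta_g(\Gamma)$ of the action of $\Gamma$ on $(\widetilde{\Sigma},g)$ satisfies $\delta_g(\Gamma)\le\delta_{\mathbb{H}^3}(\Gamma)$. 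By the identification recorded in $\S\ref{sec: entropy}$, $E(g,B)$ is the Hausdorff dimension of $\partial_\infty(\widetilde{\Sigma})$ in the Gromov metric, which for the CAT$(-1)$ space $(\widetilde{\Sigma},g)$ equals $\delta_g(\Gamma)$ by Patterson--Sullivan theory (equivalently, $E(g,B)$ is the volume-growth entropy of $(\widetilde{\Sigma},g)$, which equals $\delta_g(\Gamma)$ by cocompactness). Combining this with the entropy estimate of $\S\ref{sec: entropy}$ and the Gauss equation $1+\tfrac12\lVert B\rVert_g^2=-K_g$ gives the chain
\begin{align}
\frac{1}{\text{Vol}(g)}\int_{\Sigma}\sqrt{1+\tfrac12\lVert B\rVert_g^2}\,dV_g\ \le\ E(g,B)\ =\ \delta_g(\Gamma)\ \le\ \delta_{\mathbb{H}^3}(\Gamma)\ =\ \text{H.dim}(\Lambda_\Gamma),
\end{align}
which is the claimed inequality.

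For the equality statement: if $\Gamma$ is Fuchsian then its convex core is a closed totally geodesic surface, and any closed minimal surface lies in the convex core (nearest-point projection onto a convex set is $1$-Lipschitz and strictly decreases area off the set), hence is this totally geodesic surface, so $B\equiv0$, $K_g\equiv-1$, and both sides equal $1$. Conversely, equality in the theorem forces equality at every step of the chain; in particular equality in the entropy estimate of $\S\ref{sec: entropy}$, whose equality locus is the Fuchsian space $\mathcal{F}$, so $B\equiv0$ and, arguing as above, $\Gamma$ is Fuchsian.

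The step I expect to be the main obstacle is the rigidity in the equality case — specifically, making sure that equality in the $\S\ref{sec: entropy}$ estimate really does characterize $\mathcal{F}$ rather than merely forcing $K_g$ to be constant (which would happen if that estimate were obtained, say, by combining a Katok-type inequality with Cauchy--Schwarz against Gauss--Bonnet). In that case one would instead have to extract the equality case from $\delta_g(\Gamma)=\delta_{\mathbb{H}^3}(\Gamma)$: the term-by-term Poincar\'e comparison is an equality only when $d_g(p,\gamma p)=d_{\mathbb{H}^3}(\iota p,\gamma\iota p)$ for the $\gamma$'s realizing the exponential growth, a geodesic of $(\widetilde{\Sigma},g)$ maps to a geodesic of $\mathbb{H}^3$ only where $B$ vanishes along it, and a recurrence argument using cocompactness then forces $B\equiv0$. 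The inequality itself, by contrast, is essentially formal once the entropy estimate and the distance comparison are in hand.
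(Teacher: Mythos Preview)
Your proof is correct and follows essentially the same route as the paper: compare distances under the equivariant isometric immersion to obtain $E(g,B)\le \text{H.dim}(\Lambda_\Gamma)$ (the paper phrases this via orbit-counting functions rather than Poincar\'e series, but the two are equivalent), then apply Theorem~\ref{thm: entbound}. Your anticipated obstacle in the equality case is not one: Theorem~\ref{thm: entbound} already records that equality in the entropy estimate holds if and only if $B\equiv 0$, which is exactly what you need.
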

Setting $ \text{\textnormal{H}}.\text{\textnormal{dim}}(\Lambda_{\Gamma})$=1 in the above theorem gives a new proof of Bowen's theorem on the Hausdorff dimension of quasi-circles \cite{BOW79}.
\begin{corollary*}
A quasi-Fuchsian group $\Gamma$ is Fuchsian if and only if 
$ \text{\textnormal{H}}.\text{\textnormal{dim}}(\Lambda_{\Gamma})=1.$
\end{corollary*}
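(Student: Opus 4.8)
The plan is to deduce the Corollary directly from the Theorem, together with the minimal surface existence theory quoted in the introduction. The easy direction is classical: if $\Gamma$ is Fuchsian, then after conjugation $\Gamma$ preserves a totally geodesic copy of $\mathbb{H}^2\subset\mathbb{H}^3$, so its limit set $\Lambda_\Gamma$ is a round circle in $\partial_\infty\mathbb{H}^3$ and therefore $\mathrm{H.dim}(\Lambda_\Gamma)=1$.

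For the converse, suppose $\Gamma$ is quasi-Fuchsian with $\mathrm{H.dim}(\Lambda_\Gamma)=1$. Since $\Gamma\cong\pi_1(\Sigma)$ acts convex-cocompactly on $\mathbb{H}^3$, the quotient $M=\Gamma\backslash\mathbb{H}^3$ is a convex-cocompact hyperbolic $3$-manifold homotopy equivalent to $\Sigma$, and the inclusion of the corresponding incompressible surface is $\pi_1$-injective. By Meeks--Simon--Yau, Sacks--Uhlenbeck, Freedman--Hass--Scott and Osserman--Gulliver there is an immersed minimal surface $\Sigma\to M$ in this homotopy class, with induced metric $g$ and second fundamental form $B$; lifting it to $\mathbb{H}^3$ produces the equivariant minimal disk to which the Theorem applies. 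Invoking the Theorem,
\begin{align}
\frac{1}{\mathrm{Vol}(g)}\int_{\Sigma}\sqrt{1+\tfrac12\lVert B\rVert_g^2}\,dV_g\ \leq\ \mathrm{H.dim}(\Lambda_\Gamma)\ =\ 1.
\end{align}

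On the other hand the integrand satisfies $\sqrt{1+\tfrac12\lVert B\rVert_g^2}\geq 1$ pointwise, so the left-hand side is bounded below by $\tfrac{1}{\mathrm{Vol}(g)}\int_\Sigma dV_g=1$. Hence equality must hold in the inequality of the Theorem, and the equality clause of the Theorem yields that $\Gamma$ is Fuchsian. (Equivalently, equality in the pointwise bound forces $B\equiv 0$, so the minimal surface is totally geodesic, which exhibits $\Gamma$ as Fuchsian.) I do not expect a genuine obstacle here, since the Theorem carries all of the analytic content; the only points requiring a little care are the appeal to the existence of a $\pi_1$-injective minimal (indeed area-minimizing, hence smoothly immersed) surface in the quasi-Fuchsian manifold $M$, and the elementary remark that the limit set of a quasi-Fuchsian group is a quasi-circle and hence automatically has Hausdorff dimension at least $1$, so that the displayed chain of inequalities is forced to be an equality rather than being vacuous.
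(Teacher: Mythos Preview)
Your proof is correct and follows essentially the same route as the paper: invoke existence of a $\pi_1$-injective minimal surface, apply the Theorem with $\mathrm{H.dim}(\Lambda_\Gamma)=1$, observe that the integrand is pointwise at least $1$ so equality is forced, and conclude $B\equiv 0$ (equivalently, invoke the equality clause) to obtain a totally geodesic surface and hence that $\Gamma$ is Fuchsian. The paper's proof is slightly terser but identical in content; your additional remark that a quasi-circle has Hausdorff dimension at least $1$ is harmless but not actually needed, since you are assuming $\mathrm{H.dim}(\Lambda_\Gamma)=1$ exactly and deriving the lower bound on the left-hand side directly from the pointwise estimate on the integrand.
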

Furthermore, the techniques we use allow a quantification of Bowen's theorem which states that if the Hausdorff dimension of the limit set of a quasi-Fuchsian group is very near one, under the additional hypothesis that the quasi-Fuchsian group admits a stable, incompressible minimal surface with a lower bound on the injectivity radius, then the group in question is very close to being Fuchsian.  In particular, we obtain estimates on geometric quantities of the quotient $3$-manifold such as the convex core diameter and volume, the Teichm\"{u}ller distance between the conformal boundaries, and the principal curvatures of the stable minimal surface.  All of these quantities go to zero in a controlled way as the Hausdorff dimension of the limit set goes to $1.$  In particular, we obtain the conclusion that in this setting the stable, incompressible minimal surface in unique.

In $\S$\ref{sec: actions} we further explore the relationship between $\mathcal{H}$ and the space of quasi-Fuchsian representations.  In particular, it has been known since Uhlenbeck \cite{UHL83} that the subset $\mathcal{AF}\subset \mathcal{H}$ of almost-Fuchsian germs defined by the condition $\lVert B\rVert_g^2 < 2$ corresponds to an open subset of quasi-Fuchsian representations: on this subset we show that the action $(g,B)\rightarrow (g,-B)$ corresponds to switching the conformal boundary components in the Bers' simultaneous uniformation parameterization of those quasi-Fuchsian manifolds.  We mention that the failure of this action to globally extend in this fashion comes from critical points of a map,
\begin{align}
\Phi: \mathcal{H}\rightarrow \mathcal{R}(\pi_1(\Sigma),\text{PSL}(2,\mathbb{C})).
\end{align}
These critical points were characterized by Taubes \cite{TAU04} to correspond to minimal surfaces admitting non-trivial deformations which preserve area to second order, known as Jacobi fields.  Hence, the failure of geometric phenomena to transfer between $\mathcal{H}$ and $\mathcal{R}(\pi_1(\Sigma),\text{PSL}(2,\mathbb{C}))$ is governed by the same mechanism which leads to bifurcations in the minimal surface problem; this fascinating behavior is not well understood and only a few cases have been analyzed, we mention \cite{BT84} and \cite{JL95}.  We also observe, as a simple consequence of Sard's theorem, that the set of representations $\rho:\pi_1(\Sigma)\rightarrow \text{PSL}(2,\mathbb{C})$ admitting equivariant minimal immersions of a disk which have a non-trivial equivariant Jacobi field are measure zero in the character variety $\mathcal{R}(\pi_1(\Sigma),\text{Psl}(2,\mathbb{C})).$  This raises basic questions concerning the structure of the critical values of $\Phi.$ How singular are they as subsets of the character variety? What are the components of the complement of all critical values?

Finally, in $\S$\ref{sec: CAT(-1)} we extend the results in $\S$\ref{sec: limit set} from quasi-Fuchsian groups acting on $\mathbb{H}^3$ to convex-cocompact surface groups acting isometrically on any $n$-dimensional CAT$(-1)$ Riemannian manifold.  The generalization of Bowen's theorem in this setting is due to Bonk and Kleiner \cite{BK04}, whose theorem is considerably more general than ours as it applies to general quasi-convex, cocompact group actions on any CAT$(-1)$ metric space.  That being said, our approach also yields a new lower bound on the Hausdorff dimension of the limit set in the $n$-dimensional CAT$(-1)$ Riemannian setting; our methods are less general but provide more information.  Perhaps the work of Mese on minimal surfaces in Alexandrov spaces \cite{MES01} could be utilized to extend the techniques here to surface group actions on more general metric spaces, although we have not considered that here.  In the course of this section, we show that given a convex-cocompact representation $\rho:\pi_1(\Sigma)\rightarrow \text{Isom}(X)$ into the isometry group of a CAT$(-1)$ Riemannian manifold $X,$ any equivariant branched minimal immersion is without ramified branch points; in particular no such branched immersion has false branched points where the immersion is a branched covering onto its image.  This is a straightforward application of a previous result of Gulliver-Tomi \cite{GT89}.
\subsection{Plan of paper}
The first section $\S$\ref{sec: prelim} contains preliminary information regarding minimal surfaces and the geometry and dynamics of groups acting on CAT$(-1)$ spaces.  In $\S$\ref{sec: entropy} we study the dynamically defined entropy function on the space $\mathcal{H}$ of minimal hyperbolic germs and prove the aforementioned properties.  Next, $\S$\ref{sec: hyp manifolds} begins the investigation of the relationship between minimal surfaces and the geometry of hyperbolic 3-manifolds, this culminates in $\S$\ref{sec: limit set} with a new proof of Bowen's theorem on Hausdorff dimension of quasi-circles.  In $\S$\ref{sec: actions}, we prove that the action induced by switching conformal boundary components in the simultaneous uniformization parameterization of quasi-Fuchsian space can be seen, at least for some special quasi-Fuchsian manifolds, in terms of an associated $\frac{\mathbb{Z}}{2\mathbb{Z}}$-action on minimal hyperbolic germs.  Lastly, in $\S$\ref{sec: CAT(-1)} we provide the generalization of the results in $\S$\ref{sec: limit set} to the $n$-dimensional CAT$(-1)$ Riemannian setting.  In the appendix, various technical results needs throughout the paper are collected.

\subsection{Acknowledgments}  
This paper grew out of part of the author's Ph.D thesis completed under the supervision of Dr. William Goldman.  The results came to be after of years of Dr. Goldman's suggestions that I look into the thermodynamic formalism approach to understanding geodesic flows, I should have listened earlier!  I am also grateful to Michelle Lee and Son Lam Ho with whom I ran a small reading seminar in Fall 2012 at the University of Maryland learning the basics of Patterson-Sullivan theory.

\section{Preliminaries}\label{sec: prelim}

Throughout, $\Sigma$ denotes a smooth closed, connected, oriented surface of genus greater than 1.  The universal cover of $\Sigma$ is denoted $\widetilde{\Sigma}.$  Whenever $\Sigma$ is endowed with a Riemannian metric, we equip $\widetilde{\Sigma}$ with the pull-back metric so that the covering projection is a local isometry.

Hyperbolic 3-space $\mathbb{H}^3$ is the unique 3-dimensional, 1-connected, complete Riemannian manifold of sectional curvature $-1.$  Given an immersion,
\[f:\widetilde{\Sigma}\rightarrow \mathbb{H}^3,\]
let $N$ be a locally defined unit normal vector field.  Given coordinate vector fields $\partial_1$ and $\partial_2,$ the \textit{second fundamental form} of the immersion $f$ is the symmetric 2-tensor on $\widetilde{\Sigma}$ defined by,
\[B_{ij}=B(\partial_i,\partial_j)=\langle \nabla_{df(\partial_i)}N,df(\partial_j) \rangle,\]
where $\nabla$ denotes the Levi-Civita covariant derivative associated to the Riemannian metric $\langle -,-\rangle$ on $\mathbb{H}^3.$  As $B$ is a symmetric, covariant 2-tensor, at each point it has a pair of real eigenvalues $\lambda_1$ and $\lambda_2,$ these are the \textit{principal curvatures} of the immersion.  The \textit{mean curvature} H is the trace of $B,$ in terms of the principal curvatures,
\begin{align}
H=\lambda_1+\lambda_2.
\end{align}

If $g$ denotes the pullback of the metric on $\mathbb{H}^3$ via the immersion $f,$ then the equations of Gauss and Codazzi relate $g$ and $B$ via:
\begin{align}
K_g=-1+\lambda_1\lambda_2 \label{eqn: gauss}, \\
(\nabla_{\partial_i}B)_{jk}-(\nabla_{\partial_j}B)_{ik}=0. \label{eqn: codazzi}
\end{align}
Above, $K_g$ denotes the sectional curvature of the metric $g$ and $\nabla$ its Levi-Civita covariant derivative.

The immersion $f:\widetilde{\Sigma}\rightarrow \mathbb{H}^3$ is a \textit{minimal surface} if the mean curvature vanishes identically $H=0.$  This condition is equivalent to the immersion $f$ being critical for area with respect to all compactly supported variations.  The following existence theorem combines the work of Meeks-Simon-Yau \cite{MSY82} and Gulliver \cite{GUL77} (see also \cite{SU82} and \cite{FHS83}).
\begin{theorem}\label{thm: exist minimal}
Let $(M,g)$ be a compact 3-dimensional Riemannian manifold with $\pi_2(M)=0.$  If $M$ has boundary, then assume that $\partial M$ is mean convex.  Then given $f:\Sigma\rightarrow M$ such that $f_{*}:\pi_1(\Sigma)\rightarrow \pi_1(M)$ is injective, there exists an area minimizing immersion $g:\Sigma\rightarrow M$ in the homotopy class of $f.$
\end{theorem}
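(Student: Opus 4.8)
The plan is to realize the minimal immersion as a minimizer of the Dirichlet energy over maps \emph{and} conformal structures on $\Sigma$, following the Schoen--Yau and Sacks--Uhlenbeck strategy for incompressible minimal surfaces; the geometric measure theory approach of Meeks--Simon--Yau provides a parallel route when $f$ is an embedding. Fix an auxiliary metric on $M$ in which $\partial M$ (if nonempty) is mean convex, and for a conformal structure $c$ on $\Sigma$ and $u\in W^{1,2}(\Sigma,M)$ homotopic to $f$ let $E(u,c)$ be the Dirichlet energy. Since the energy dominates the area, with equality exactly when $u$ is weakly conformal, and $\inf_c E(u,c)$ equals the area of $u$, it suffices to produce a smooth weakly conformal harmonic map $u_\infty:(\Sigma,c_\infty)\to M$ minimizing $E$ in the homotopy class of $f$: such a map is a branched minimal immersion realizing the least area. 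Take a minimizing sequence $(u_j,c_j)$; after the Sacks--Uhlenbeck $\alpha$-energy regularization (or by replacing $u_j$, for each fixed $c_j$, by an energy-minimizing harmonic map) we may assume each $u_j$ is harmonic with respect to $c_j$ and that the energies are uniformly bounded.

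The first point is to prevent degeneration of the domain conformal structure. If $c_j$ left every compact subset of the moduli space of $\Sigma$, then by the collar lemma there would be an essential simple closed geodesic $\gamma_j\subset(\Sigma,c_j)$ surrounded by a conformal collar of modulus tending to infinity. The standard energy estimate for a harmonic map on a flat cylinder of large modulus forces the image of the core circle to have length tending to $0$; but $M$ is compact, so there is $\epsilon_0>0$ such that every loop of length $<\epsilon_0$ is null-homotopic in $M$, whence $u_j(\gamma_j)$ is eventually contractible. This contradicts the injectivity of $f_*$, which guarantees that $u_j(\gamma_j)$, being freely homotopic to $f(\gamma_j)$ with $[\gamma_j]\neq 1$ in $\pi_1(\Sigma)$, is nontrivial in $\pi_1(M)$. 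Hence the $c_j$ stay in a compact part of moduli space and, along a subsequence, converge to a Riemann surface structure $c_\infty$ on $\Sigma$.

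Next we rule out loss of energy to bubbles. Applying the Sacks--Uhlenbeck compactness theorem to the harmonic maps $u_j$ on $(\Sigma,c_\infty)$, a subsequence converges weakly to a harmonic map $u_\infty:(\Sigma,c_\infty)\to M$ away from finitely many points, at which the lost energy is carried by nonconstant harmonic spheres $S^2\to M$. Because $\pi_2(M)=0$, the removable singularity theorem forces every such harmonic sphere to be constant, so no energy is lost: $E(u_\infty)=\lim_j E(u_j)$ is the infimal energy, and since the trivial bubbles do not change the homotopy class, $u_\infty$ is homotopic to $f$. Elliptic regularity makes $u_\infty$ smooth, and because we minimized over conformal structures its Hopf differential vanishes, i.e.\ $u_\infty$ is weakly conformal; thus $u_\infty$ is a branched minimal immersion whose area equals the least area in the homotopy class of $f$.

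Finally, we upgrade ``branched immersion'' to ``immersion'' and handle the boundary. Interior true branch points of an area-minimizing branched minimal surface are excluded by the argument of Osserman together with Gulliver (and Alt), and false branch points by Gulliver--Osserman--Royden; hence $u_\infty$ has no branch points and is an immersion. When $\partial M\neq\emptyset$, mean convexity of $\partial M$ is precisely the barrier hypothesis: the maximum principle prevents the area minimizer from touching $\partial M$ from the interior unless it lies inside $\partial M$, which the injectivity of $f_*$ into the compact manifold $M$ rules out, so $u_\infty$ maps into $M$ as claimed. The main obstacles are the two compactness steps — using incompressibility to confine the domain conformal structure to a compact part of moduli space, and using $\pi_2(M)=0$ to forbid energy-carrying bubbles — since these are exactly where the hypotheses enter; the branch point removal, though delicate, is by now classical.
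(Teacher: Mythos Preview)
The paper does not supply its own proof of this theorem: it is quoted as a background existence result, with the attribution ``combines the work of Meeks--Simon--Yau \cite{MSY82} and Gulliver \cite{GUL77} (see also \cite{SU82} and \cite{FHS83}).'' So there is nothing to compare against line by line; what you have written is a serviceable sketch of the Sacks--Uhlenbeck/Schoen--Yau variational route that those references make rigorous, and it is the approach the paper implicitly has in mind.

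One genuine slip to fix in your bubbling step. You write that ``because $\pi_2(M)=0$, the removable singularity theorem forces every such harmonic sphere to be constant.'' That is not what $\pi_2(M)=0$ gives you: a harmonic $2$-sphere can be nonconstant and still null-homotopic (the removable singularity theorem only says a finite-energy harmonic plane extends over $\infty$, it says nothing about constancy). The correct argument is by contradiction with minimality: if a nonconstant bubble separated, it would carry a definite positive quantum of energy, so $E(u_\infty)<\lim_j E(u_j)=\inf$; but since $\pi_2(M)=0$ the bubble is null-homotopic and the neck analysis shows $u_\infty$ remains homotopic to $f$, contradicting the definition of the infimum. Your conclusion (no energy lost, $u_\infty$ in the right homotopy class) survives, but the justification needs this rerouting.

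Two smaller remarks. First, the absence of \emph{false} branch points via Gulliver--Osserman--Royden uses incompressibility ($f_*$ injective), not just minimality; you have that hypothesis, but it is worth saying explicitly since this is exactly the mechanism the paper later invokes in Theorem~\ref{thm: nobranch}. Second, your boundary paragraph is a bit glib: the mean-convex barrier argument as stated needs either that the minimizer is produced by geometric measure theory in $M$ (the Meeks--Simon--Yau route), or a version of the strong maximum principle for minimal surfaces touching a mean-convex hypersurface; in the harmonic-map approach one typically works in the metric double of $M$ or uses the Meeks--Yau convexity condition directly to keep competitors interior. None of this is wrong, but ``the maximum principle prevents the area minimizer from touching $\partial M$'' hides real work.
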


\textbf{Remark:}  The condition that $\partial M$ is mean convex means that any deformation of $\partial M$ in the outward normal direction is area non-decreasing.  The hypothesis also allows that the boundary be non-smooth, provided it satisfies a natural convexity condition (see \cite{MY82} for details).  In particular, boundaries of open convex sets satisfy this condition.

Now we can introduce the space of minimal hyperbolic germs constructed by Taubes.

Let $(g,B)\in \Gamma(S^2_{>0} T^*\Sigma)\oplus \Gamma(S^2 T^*\Sigma)$ be a pair consisting of a Riemannian metric and symmetric 2-tensor on $\Sigma.$  Such a pair is called a \textit{minimal hyperbolic germ} if $B$ is traceless with respect to $g$ and the Gauss-Codazzi equations \eqref{eqn: gauss} and \eqref{eqn: codazzi} are satisfied.  Letting $\text{Diff}_0(\Sigma)$ be the space of orientation preserving diffeomophisms of $\Sigma$ isotopic to the identity, the space $\mathcal{H}$ of minimal hyperbolic germs is the quotient,
\[\mathcal{H}=\{\text{minimal hyperbolic germs}\}/\text{Diff}_0({\Sigma}),\]
with $\text{Diff}_0(\Sigma)$ acting by pullback on the pair of tensors $(g,B).$  By abuse of notation, we shall say a pair
$(g,B)\in \mathcal{H}$ to mean that the orbit of the pair belongs to $\mathcal{H}.$  The following fundamental theorem of surface theory \cite{UHL83} shows that every element $(g,B)\in\mathcal{H}$ can be integrated to an immersed minimal disk in $\mathbb{H}^3$
with first and second fundamental form $(g,B).$
\begin{theorem} \label{thm: fundamental}
Let $(g,B)\in\mathcal{H}.$  Then there exists an immersion $f:\widetilde{\Sigma}\rightarrow \mathbb{H}^3$ whose induced metric and second fundamental form coincide with the lifts of $g$ and $B$ to $\widetilde{\Sigma}.$  Furthermore, if $O\in\mathbb{H}^3$ is chosen along with a preferred orthonormal frame $\{E_1,E_2,N\}\subset T_{O}\mathbb{H}^3,$ then the map $f$ is uniquely determined by fixing $p\in\widetilde{\Sigma}$ and an orthonormal frame $\{\partial_1,\partial_2\}\subset T_{p}\widetilde{\Sigma}$ and requiring that,
\begin{itemize}
\item $f(p)=O,$
\item $df(\partial_i)=E_i.$
\end{itemize}
\end{theorem}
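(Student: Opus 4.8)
The plan is to prove this as the fundamental (Bonnet) theorem of hypersurface theory for the space form $\mathbb{H}^3$, by the method of moving frames. The idea is to encode $(g,B)$ together with the Levi-Civita connection of $g$ as a single $\mathfrak g$-valued $1$-form $\theta$ on $\widetilde{\Sigma}$, where $\mathfrak g=\mathfrak{so}(3,1)$ is the Lie algebra of $G:=\mathrm{Isom}^{+}(\mathbb H^3)$ acting on the hyperboloid model; to recognize the Gauss--Codazzi equations \eqref{eqn: gauss}--\eqref{eqn: codazzi} as exactly the assertion that $\theta$ satisfies the Maurer--Cartan equation; and then, since $\widetilde{\Sigma}$ is simply connected, to integrate $\theta$ to a map $F\colon\widetilde{\Sigma}\to G$ and project it to $\mathbb H^3$ to obtain $f$.

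In detail, I would first pick a local $g$-orthonormal coframe $(\omega^1,\omega^2)$ with Levi-Civita connection form $\omega^1_2=-\omega^2_1$ determined by $d\omega^i=-\omega^i_j\wedge\omega^j$, and set $\beta^i:=\sum_j B_{ij}\,\omega^j$, the components of $B$ in this coframe. Packaging $(\omega^i,\,\omega^1_2,\,\beta^i)$ into an $\mathfrak{so}(3,1)$-valued matrix $\theta$ --- with the additional block entries $\pm\omega^i$ supplied by the hyperboloid model, i.e. by the ambient curvature $-1$ --- one checks that $d\theta+\tfrac12[\theta,\theta]=0$ decomposes into three families of scalar identities: the first structure equations for $(\omega^i)$, which hold automatically because $\omega^1_2$ is the Levi-Civita connection of $g$ and $B$ is symmetric; the identity for $d\omega^1_2$, which is term-for-term the Gauss equation \eqref{eqn: gauss}, the extra $\omega^1\wedge\omega^2$ term coming from the ambient curvature; and the identity for $d\beta^i$, which is term-for-term the Codazzi equation \eqref{eqn: codazzi}. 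Thus $(g,B)\in\mathcal H$ is equivalent to $\theta$ being flat, and since a change of orthonormal coframe acts on $\theta$ through the adjoint action of $\mathrm{SO}(2)\hookrightarrow G$, these local forms patch into a global flat connection on the trivial bundle $\widetilde{\Sigma}\times G$.

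Next I would invoke the classical integration lemma (Cartan's theorem on maps into a Lie group): on a simply connected manifold, a $\mathfrak g$-valued $1$-form solving Maurer--Cartan equals $F^{*}\omega_{G}$ for a smooth $F\colon\widetilde{\Sigma}\to G$, unique up to replacing $F$ by $g_0F$ with $g_0\in G$ fixed; equivalently, one integrates $\theta$ along paths from $p$, the monodromy vanishing because $\pi_1(\widetilde{\Sigma})=1$. Put $f:=\pi\circ F$, where $\pi\colon G\to G/\mathrm{Stab}(O)\cong\mathbb H^3$. Reading the components of $\theta$ back off shows that $df$ has rank two with $f^{*}\langle-,-\rangle=(\omega^1)^2+(\omega^2)^2=g$, that the image under $F$ of the normal direction at $O$ defines a unit normal field $N$ along $f$, and that $\langle\nabla_{df(\partial_i)}N,df(\partial_j)\rangle=B_{ij}$; hence $f$ is an immersion realizing $(g,B)$, automatically minimal since $\operatorname{tr}_g B=0$. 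Finally, the left-translation ambiguity $F\mapsto g_0F$ is precisely post-composition of $f$ by an isometry of $\mathbb H^3$; imposing $f(p)=O$ and $df(\partial_i)=E_i$ forces that isometry to fix the frame $\{E_1,E_2,N\}$ at $O$, hence to equal the identity, which gives the asserted uniqueness.

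The only genuine computation is the middle step: fixing a matrix model of $\mathfrak{so}(3,1)$ adapted to $\mathbb H^3$ and checking, with consistent sign conventions, that the three blocks of the Maurer--Cartan equation are the torsion-free condition, the Gauss equation, and the Codazzi equation. I expect this to be the main (though routine) obstacle, since it is where all the hypotheses on $(g,B)$ are consumed; everything afterwards --- integrating a flat connection over a simply connected base and normalizing the frame --- is formal.
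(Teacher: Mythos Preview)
Your argument via moving frames and the Maurer--Cartan equation is a correct and standard proof of Bonnet's fundamental theorem in the space form $\mathbb{H}^3$; the identification of the Gauss--Codazzi system with flatness of the $\mathfrak{so}(3,1)$-valued form, followed by integration over the simply connected $\widetilde{\Sigma}$, is exactly the right mechanism, and your handling of the uniqueness via the left-translation ambiguity is clean.

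However, the paper itself does not prove this statement: it simply records it as the classical fundamental theorem of surface theory, citing \cite{UHL83}. So there is no ``paper's own proof'' to compare against --- you have supplied a proof where the paper only gave a reference. Your write-up would serve perfectly well as an appendix or expository fill-in, and the one caution you already flag (fixing sign conventions in the matrix model so that the ambient curvature $-1$ contributes the correct $\omega^1\wedge\omega^2$ term to the Gauss block) is indeed the only place requiring care.
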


The following fundamental theorem is due to Taubes,
\begin{theorem}[\cite{TAU04}]
The space of minimal hyperbolic germs $\mathcal{H}$ is a smooth, oriented manifold of dimension $12g-12$ where $g$ is
the genus of $\Sigma.$
\end{theorem}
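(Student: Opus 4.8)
\emph{Proof proposal.}
The plan is to exhibit a diffeomorphism between $\mathcal{H}$ and the total space of the bundle of holomorphic quadratic differentials over Teichm\"uller space $\mathcal{T}(\Sigma)$. Since this bundle is (canonically) the holomorphic cotangent bundle $T^{*}\mathcal{T}(\Sigma)$ of the $(3g-3)$-dimensional complex manifold $\mathcal{T}(\Sigma)$, it is a complex manifold of complex dimension $6g-6$, hence a smooth real manifold of dimension $12g-12$ carrying a canonical orientation. Everything then reduces to producing this identification and checking that it is smooth in both directions.

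The first ingredient is the classical dictionary between traceless second fundamental forms and quadratic differentials. Given a metric $g$ on $\Sigma$, let $X$ denote the underlying Riemann surface. A symmetric $2$-tensor $B$ that is traceless with respect to $g$ is, in any local conformal coordinate $z$, of the form $B=\mathrm{Re}(\phi\,dz^{2})$ for a locally defined function $\phi$, and a direct computation shows that the Codazzi equation \eqref{eqn: codazzi} is equivalent to the Cauchy--Riemann equations for $\phi$; thus $B$ determines and is determined by a globally defined holomorphic quadratic differential $q=\phi\,dz^{2}$ on $X$, and with the appropriate normalization of the correspondence one has $\tfrac12\lVert B\rVert_{g}^{2}=\lVert q\rVert_{g}^{2}$. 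This correspondence is manifestly $\mathrm{Diff}(\Sigma)$-equivariant. Conversely, I claim a pair $(X,q)$ determines a unique metric $g$ in the conformal class of $X$ making $(g,B)$ a minimal hyperbolic germ. Writing $g=e^{2u}\sigma$ with $\sigma$ the hyperbolic metric of $X$ (supplied by uniformization) and using $\lVert q\rVert_{g}^{2}=e^{-4u}\lVert q\rVert_{\sigma}^{2}$, the Gauss equation \eqref{eqn: gauss} becomes the scalar PDE
\begin{align}
\Delta_{\sigma} u = e^{2u}+\lVert q\rVert_{\sigma}^{2}\,e^{-2u}-1
\end{align}
for the conformal factor $u\in C^{\infty}(\Sigma)$, where $\Delta_{\sigma}$ is the Laplace--Beltrami operator of $\sigma$ and $\lVert q\rVert_{\sigma}^{2}$ is a fixed smooth nonnegative function.

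The analytic core is to solve this equation and to control the dependence of the solution on $(X,q)$. The equation is the Euler--Lagrange equation of the functional $u\mapsto\int_{\Sigma}\bigl(\tfrac12\lvert\nabla u\rvert_{\sigma}^{2}+\tfrac12 e^{2u}+\tfrac12\lVert q\rVert_{\sigma}^{2}e^{-2u}-u\bigr)\,dV_{\sigma}$ on $W^{1,2}(\Sigma)$, which is strictly convex (the map $u\mapsto e^{2u}+\lVert q\rVert_{\sigma}^{2}e^{-2u}$ has everywhere positive second derivative) and coercive (combine the gradient term, the linear term $-\int u$, and Jensen's inequality applied to $\int e^{2u}$). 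Hence it admits a unique minimizer, which by elliptic regularity is smooth and solves the PDE, and by strict convexity it is the only solution. To see that $u$ depends smoothly on $(X,q)$ --- equivalently, that the inverse of the correspondence of the previous paragraph is smooth --- I would invoke the implicit function theorem in H\"older spaces: the linearization of the PDE at a solution is $\Delta_{\sigma}-\bigl(2e^{2u}+2\lVert q\rVert_{\sigma}^{2}e^{-2u}\bigr)$, an operator whose zeroth order term is everywhere negative, hence an isomorphism $C^{2,\alpha}\to C^{0,\alpha}$.

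Assembling the pieces: the map $(g,B)\mapsto\bigl([g],q(B)\bigr)$ is a $\mathrm{Diff}_{0}(\Sigma)$-equivariant bijection from the space of minimal hyperbolic germs onto the total space of the bundle $\mathcal{Q}\to\mathcal{C}(\Sigma)$ of holomorphic quadratic differentials over the space of conformal structures, with smooth inverse given by solving the Gauss equation; passing to $\mathrm{Diff}_{0}(\Sigma)$-quotients and invoking the classical fact that $\mathcal{C}(\Sigma)/\mathrm{Diff}_{0}(\Sigma)=\mathcal{T}(\Sigma)$ is a smooth manifold diffeomorphic to $\mathbb{R}^{6g-6}$ identifies $\mathcal{H}$ with $T^{*}\mathcal{T}(\Sigma)$, whence the dimension count $2(6g-6)=12g-12$ together with the orientability. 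I expect the main obstacle to be the PDE step: establishing existence of a solution for an arbitrary (in particular arbitrarily large) quadratic differential $q$, and, more delicately, arranging the implicit-function-theorem argument in the correct functional-analytic framework so that the resulting bijection $\mathcal{H}\cong T^{*}\mathcal{T}(\Sigma)$ is a genuine diffeomorphism of smooth manifolds rather than merely a set-theoretic identification.
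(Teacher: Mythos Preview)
The paper does not prove this theorem; it is quoted from Taubes without argument, so there is no in-paper proof to compare against. Your proposal is an independent argument, and the strategy is sound: the PDE you write down is correct, the functional is strictly convex and coercive on $W^{1,2}(\Sigma)$ for \emph{every} holomorphic quadratic differential $q$ (no smallness needed), and the linearization $\Delta_{\sigma}-(2e^{2u}+2\lVert q\rVert_{\sigma}^{2}e^{-2u})$ is invertible, so the implicit function theorem gives smooth dependence of the solution on the data. This yields the identification $\mathcal{H}\cong T^{*}\mathcal{T}$ and hence the dimension and orientation.

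This route is different from Taubes' original one. Taubes treats the full Gauss--Codazzi system as a Fredholm section of a Banach bundle over (a Sobolev completion of) the space of pairs $(g,B)$, proves transversality directly, and reads off the dimension from an index computation before passing to the $\mathrm{Diff}_0$-quotient. Your approach is more economical and buys a concrete global parametrization --- indeed, the paper later records your forward map as $\Psi$ but never asserts it is a diffeomorphism --- at the cost of importing the Teichm\"uller-theoretic machinery that makes $T^{*}\mathcal{T}$ a manifold. The one point you correctly flag as delicate is genuine: you are effectively \emph{defining} the smooth structure on $\mathcal{H}$ by transport along $\Psi$, so to make the statement non-tautological you should check that this structure agrees with the one $\mathcal{H}$ inherits as a subquotient of the space of tensor pairs --- equivalently, that the map $([g],q)\mapsto (e^{2u}\sigma,\mathrm{Re}(q))$ lands smoothly in the ambient quotient. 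This follows from your implicit-function-theorem step together with the standard slice construction for the $\mathrm{Diff}_0$-action, but it should be said explicitly.
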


The Teichm\"{u}ller space $\mathcal{T}$ is the space of isotopy classes of complex structures agreeing with the orientation of $\Sigma,$ which by the K\"{o}ebe uniformization theorem can also be described as the space of isotopy classes of Riemannian metrics of constant curvature $-1.$  For the sake of context, the space of isotopy classes of metrics of constant curvature $-1$ will be called the Fuchsian space, denoted by $\mathcal{F}.$  As such, the Fuchsian space includes into $\mathcal{H}$ via the map,
\begin{align}
\mathcal{F}&\longrightarrow \mathcal{H}\\
g&\mapsto (g,0).
\end{align}

Given a complex structure $\sigma\in\mathcal{T},$ Kodaira-Spencer deformation theory identifies the fiber of the holomorphic cotangent bundle over $\sigma$ as the space of holomorphic quadratic differential $\alpha=\alpha(z) dz^2$ on the Riemann surface $(\Sigma,\sigma).$

The space $\mathcal{H}$ admits an important map to $T^{*}\mathcal{T}$: given $(g,B)\in\mathcal{H},$ let $[g]\in\mathcal{T}$ denote the conformal structure induced by the Riemannian metric $g.$  If $(x_1,x_2)$ are local, isothermal coordinates for the metric $g,$ Hopf observed in \cite{HOP54} that the Codazzi equations along with the fact that $B$ is trace-free imply that the expression:
\begin{align}
\alpha(g,B)=(B_{11}-iB_{12})(x_1,x_2)dz^2,
\end{align}
defines a holomorphic quadratic differential on $(\Sigma,[g])$ where $z=x_1+ix_2.$
This assignment defines a smooth mapping,
\begin{align}\label{map: Psi}
\Psi:\mathcal{H}&\longrightarrow T^{*}\mathcal{T} \\
(g,B)&\mapsto ([g],\alpha).
\end{align}
Furthermore, $\text{Re}(\alpha)=B.$  The obvious action of the circle on $T^{*}\mathcal{T}$ induces an action of $\mathbb{S}^1$ on $\mathcal{H}$ making the above mapping equivariant, under this action the metric $g$ is left completely unchanged.  Hence, the $\mathbb{S}^1$-orbit of minimal surfaces are all mutually isometric, it is often called the \textit{associated family} corresponding to any particular element of the orbit.  Furthermore, the action is free if and only if $B\neq 0.$

\subsection{Almost-Fuchsian germs}

A minimal germ $(g,B)\in\mathcal{H}$ is called almost-Fuchsian if $\lVert B\rVert_g^2<2.$  We denote the set of almost-Fuchsian germs by $\mathcal{AF}.$  These minimal germs directly correspond to hyperbolic 3-manifolds via the following theorem due to Uhlenbeck.

\begin{theorem}[\cite{UHL83}]\label{thm: uhl}
Let $(g,B)\in\mathcal{AF}.$  Then the metric,
\begin{align}
G(g,B)=dt^2+g\left(\cosh(t)\mathbb{I}(-)+\sinh(t)\mathbb{S}(-),\cosh(t)\mathbb{I}(-)+\sinh(t)\mathbb{S}(-)\right)
\end{align}
is a complete hyperbolic metric on $\Sigma\times\mathbb{R}$ where $\mathbb{S}$ is the $(1,1)$ tensor associated to $B$ by raising an index and $\mathbb{I}$ is the identity.  Furthermore,
\begin{itemize}
\item G(g,B) is quasi-isometric to a Fuchsian metric.  Hence, it is quasi-Fuchsian.
\item The slice $\Sigma\times \{0\}$ is an embedded, least area minimal surface in $\Sigma\times \mathbb{R}$ with induced metric and second fundamental form $(g,B).$  This is the only closed minimal surface of any kind in $(\Sigma\times\mathbb{R},G).$
\end{itemize}
\end{theorem}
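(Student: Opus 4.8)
The strategy is to realize $(g,B)$ by an equivariant minimal immersion into $\mathbb H^3$ and then recover the ambient hyperbolic metric as the pullback of the metric on $\mathbb H^3$ under the normal exponential map of the image surface. By Theorem~\ref{thm: fundamental} there is an immersion $f\colon\widetilde\Sigma\to\mathbb H^3$ whose induced metric and second fundamental form are the lifts of $g$ and $B$. Since $g$ and $B$ descend to $\Sigma$, for each $\gamma\in\pi_1(\Sigma)$ the immersion $f\circ\gamma$ has the same induced metric and second fundamental form as $f$; by the uniqueness clause of Theorem~\ref{thm: fundamental}, which amounts to the rigidity of surfaces in $\mathbb H^3$ up to ambient isometry, there is a unique $\rho(\gamma)\in\mathrm{Isom}^{+}(\mathbb H^3)=\mathrm{PSL}(2,\mathbb C)$ with $f\circ\gamma=\rho(\gamma)\circ f$, and comparing points and frames at a basepoint shows $\rho$ is a homomorphism with respect to which $f$ is equivariant. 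Orienting $\widetilde\Sigma$ compatibly with $\Sigma$ gives a $\rho$-equivariant unit normal $N$ along $f$; define $\mathcal E\colon\widetilde\Sigma\times\mathbb R\to\mathbb H^3$ by $\mathcal E(p,t)=\exp_{f(p)}(tN(p))$, which is again $\rho$-equivariant.

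Along each unit-speed normal geodesic $t\mapsto\mathcal E(p,t)$ the tangential part of $d\mathcal E$ is a Jacobi field, and in curvature $-1$ the Jacobi operator is $(\cdot)''-(\cdot)$; with the initial data furnished by $df$ and the shape operator $\mathbb S$ the fundamental solution is $\cosh t\,\mathbb I+\sinh t\,\mathbb S$. Combining this with the Gauss lemma (normal geodesics are orthogonal to the equidistant surfaces) yields
\begin{align}
\mathcal E^{*}(\text{hyperbolic metric})=G(g,B).
\end{align}
Because $B$ is trace-free, the eigenvalues of $\mathbb S$ are $\pm\lambda$ with $\lambda=\lVert B\rVert_g/\sqrt2$, so the eigenvalues of $\cosh t\,\mathbb I+\sinh t\,\mathbb S$ are $\cosh t\pm\lambda\sinh t$, which vanish for some $t$ exactly when $\lambda\ge 1$. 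Hence on $\mathcal{AF}$, where $\lVert B\rVert_g^2<2$, the form $G(g,B)$ is a genuine Riemannian metric and $\mathcal E$ is a local diffeomorphism, so $G(g,B)$ is locally isometric to $\mathbb H^3$ and has constant curvature $-1$. (Alternatively one checks $\mathrm{curv}(G(g,B))\equiv-1$ directly from \eqref{eqn: gauss} and \eqref{eqn: codazzi}.) Writing $G=dt^2+g_t$ with $g_t$ a smooth family of metrics on the compact surface $\Sigma$, the inequality $G\ge dt^2$ makes $t$ a $1$-Lipschitz function, so every Cauchy sequence lies in a compact slab $\Sigma\times[a,b]$; hence $(\Sigma\times\mathbb R,G)$, and therefore its universal cover, is complete. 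A complete local isometry into the simply connected space $\mathbb H^3$ is a diffeomorphism, so $\mathcal E$ is the developing map of the hyperbolic structure $G(g,B)$, with holonomy $\rho$; in particular $\Gamma:=\rho(\pi_1(\Sigma))$ acts freely and properly discontinuously with quotient $\Sigma\times\mathbb R$.

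On the compact surface $\Sigma$ one has $\lambda\le\lambda_0<1$, and then $(1-\lambda_0)\cosh t\le\cosh t\pm\lambda\sinh t\le 2\cosh t$, so $g_t$ is uniformly bi-Lipschitz to $\cosh^2(t)\,h$ for a fixed hyperbolic metric $h$ in the conformal class of $g$; thus the identity map exhibits $(\widetilde\Sigma\times\mathbb R,\widetilde G)$ as $\rho$-equivariantly bi-Lipschitz to $\mathbb H^3$ with a Fuchsian action, so the equivariant quasi-isometry extends to a boundary map carrying a round circle to $\Lambda_\Gamma$, and $\Gamma$ is quasi-Fuchsian. By construction $\Sigma\times\{0\}=\mathcal E(\widetilde\Sigma\times\{0\})$ is embedded, since $\mathcal E$ is a diffeomorphism, and is minimal with induced data $(g,B)$. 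A direct computation with the shape operator $\mathbb S_t=(\cosh t\,\mathbb I+\sinh t\,\mathbb S)^{-1}(\sinh t\,\mathbb I+\cosh t\,\mathbb S)$ of the slice $\Sigma\times\{t\}$ gives mean curvature
\begin{align}
H_t=\frac{2\,(1-\lambda^2)\sinh t\cosh t}{\cosh^2 t-\lambda^2\sinh^2 t},
\end{align}
which (using $\lambda<1$) is strictly positive for $t>0$ and strictly negative for $t<0$; equivalently, the mean curvature vector of each slice points toward $\Sigma\times\{0\}$. Consequently every slab $\Sigma\times[-T,T]$ has strictly mean-convex boundary, so Theorem~\ref{thm: exist minimal} yields an area-minimizer in the homotopy class of the inclusion; by the boundary maximum principle it avoids $\partial(\Sigma\times[-T,T])$, hence is a closed minimal surface, and the interior maximum principle applied at the maximum and the minimum of $t$ along any closed minimal surface forces that surface to coincide with $\Sigma\times\{0\}$. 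Since any surface homotopic to the slice has compact image inside some slab, this shows $\Sigma\times\{0\}$ is least-area in its homotopy class and is the only closed minimal surface of any kind.

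The two substantive points are the identification $\mathcal E^{*}(\text{hyperbolic metric})=G(g,B)$ in the second step — which requires running the Jacobi/Riccati computation along normal geodesics carefully and tracking how the initial shape operator $\mathbb S$ enters — and the maximum-principle argument for uniqueness of the closed minimal surface, together with the sign of $H_t$ that makes the slabs mean-convex; this is where the hypothesis $\lVert B\rVert_g^2<2$ is used most essentially. The quasi-Fuchsian conclusion itself is comparatively soft once the bi-Lipschitz comparison with a Fuchsian metric is in hand.
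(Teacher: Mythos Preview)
The paper does not supply a proof of this theorem; it is quoted from Uhlenbeck \cite{UHL83} and used as a black box. Your argument is correct and is, in outline, exactly Uhlenbeck's: realize $(g,B)$ by an equivariant minimal immersion, pull back the hyperbolic metric by the normal exponential map using the Jacobi equation to obtain the explicit form $G(g,B)$, use the almost-Fuchsian bound $\lambda<1$ to see the operator $\cosh t\,\mathbb I+\sinh t\,\mathbb S$ is nondegenerate for all $t$, conclude completeness and that the developing map is a global isometry to $\mathbb H^3$, get the bi-Lipschitz comparison with a Fuchsian metric from the uniform eigenvalue bounds, and finally use the strictly mean-convex equidistant foliation $H_t=\dfrac{2(1-\lambda^2)\sinh t\cosh t}{\cosh^2 t-\lambda^2\sinh^2 t}$ together with the maximum principle to force any closed minimal surface to coincide with $\Sigma\times\{0\}$, which also identifies the area minimizer produced by Theorem~\ref{thm: exist minimal} in each slab. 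The two places you flag as ``substantive'' are indeed the ones that require care, and your treatment of both is sound.
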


\subsection{Limit sets of discrete groups and CAT(-1) spaces} \label{sec: cat}

In this section we will introduce the necessary ingredients we will need from the theory developed by Patterson \cite{PAT76}, Sullivan \cite{SUL84}, Bourdon \cite{BOU95} and Coornaert \cite{COO93}.

Let $(X,d)$ be a proper, CAT$(-1)$ metric space (for our needs we may assume this to be a $1$-connected. complete Riemannian manifold of sectional curvature $\leq -1).$  Given $p\in X,$ the geometric (or visual) boundary $\partial_{p,\infty}(X)$ of $X$ is the space of equivalence classes of geodesic rays based at $X.$  Two rays $\gamma,\eta:[0,\infty)\rightarrow X$ are equivalent if there exists $K>0$ such that $d(\gamma(t),\eta(t))<K$ for all $t.$  The \textit{Gromov product} at p is defined by,
\begin{align}
(x,y)_{p}=\frac{1}{2}(d(x,p)+d(y,p)-d(x,y)).
\end{align}
In a CAT$(-1)$ space, this product extends to the geometric boundary via,
\begin{align}
(\eta, \gamma)_{p}=\displaystyle\lim_{t\rightarrow\infty}(\eta(t),\gamma(t))_{p},
\end{align}
for $\eta, \gamma\in \partial_{p,\infty}(X).$
Using the Gromov product, we define the Gromov metric on the geometric boundary by,
\begin{align}
d_p(\eta,\gamma)= \left\{
     \begin{array}{lr}
       e^{-(\eta,\gamma)_{p}} &: \eta\neq \gamma\\
       0 &:  \text{else}
     \end{array}
   \right.
\end{align}
As $p\in X$ varies, the Gromov metrics are all bi-Lipschitz equivalent to one another.

Suppose $\Gamma<\text{Isom}(X)$ is a discrete, convex-cocompact subgroup, this means there is a geodesically convex, $\Gamma-$invariant subset of X upon which $\Gamma$ acts cocompactly.  Define the orbit counting function associated to $\Gamma$ by,
\begin{align}
N_{\Gamma}(x,R)=\lvert\{\gamma\in\Gamma \ \vert \ d(x,\gamma(x))<R\}\rvert.
\end{align}
Then the volume entropy of $\Gamma$ is:
\begin{align}
\delta(\Gamma)=\displaystyle\lim_{R\rightarrow\infty}\frac{\log(N_{\Gamma}(R,x))}{R}.
\end{align}
This number is independent of $x\in X$ and is a measure of the dynamical complexity of the group $\Gamma.$
The \textit{limit set} $\Lambda_{\Gamma}$ of $\Gamma$ is the set of accumulation points of $\Gamma$-orbits of a fixed point $x\in X$ in $\partial_{\infty}(X).$  Equivalently, $\Lambda_{\Gamma}$ is the smallest non-empty, closed $\Gamma$-invariant subset of $\partial_{\infty}(X).$  We will use the following theorem (see \cite{COO93}).
\begin{theorem}\label{thm: enthdim}
Let $\Gamma<\text{Isom}(X)$ be a discrete, convex-cocompact subgroup of isometries of a proper, CAT$(-1)$ metric space $X.$  Then
\[\delta(\Gamma)=\text{\textnormal{H.dim}}(\Lambda_{\Gamma}).\]
Here the Hausdorff dimension is computed using any of the Gromov metrics on $\partial_{\infty}(X).$
\end{theorem}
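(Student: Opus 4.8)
The plan is to realize $\Lambda_\Gamma$ as the support of a \emph{Patterson--Sullivan measure} of dimension $\delta(\Gamma)$ and to extract from it a two-sided Ahlfors-regularity estimate for the Gromov metric. First I would introduce the Poincar\'e series $\mathcal{P}_s(x,y)=\sum_{\gamma\in\Gamma}e^{-s\,d(x,\gamma y)}$, which by the definition of $\delta(\Gamma)$ converges for $s>\delta(\Gamma)$ and diverges for $s<\delta(\Gamma)$. Choosing $s_k\downarrow\delta(\Gamma)$ and forming the normalized atomic measures $\mu_x^{s_k}=\mathcal{P}_{s_k}(x,x)^{-1}\sum_{\gamma\in\Gamma}e^{-s_k\,d(x,\gamma x)}D_{\gamma x}$ on the compact space $X\cup\partial_\infty X$ (where $D_z$ is the unit Dirac mass at $z$), any weak-$*$ subsequential limit $\mu_x$ is a probability measure; since $\Gamma$ is discrete and convex-cocompact it is of divergence type, so no Patterson rescaling trick is needed and one checks directly that $\mathrm{supp}(\mu_x)\subset\Lambda_\Gamma$. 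Carrying this out for every basepoint and passing the cocycle identities for $d(\cdot,\gamma\cdot)$ to the limit, the family $\{\mu_x\}_{x\in X}$ is seen to be $\delta(\Gamma)$-conformal: $\gamma_*\mu_x=\mu_{\gamma x}$ for all $\gamma\in\Gamma$, and $\frac{d\mu_y}{d\mu_x}(\xi)=e^{-\delta(\Gamma)\,\beta_\xi(y,x)}$, where $\beta_\xi(y,x)=\lim_{z\to\xi}\bigl(d(y,z)-d(x,z)\bigr)$ is the Busemann cocycle of $X$.

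The technical heart is Sullivan's \emph{shadow lemma} adapted to the CAT$(-1)$ setting. For $R>0$ and $z\in X$ let $\mathcal{O}_R(x,z)\subset\partial_\infty X$ be the shadow of $B(z,R)$ from $x$, i.e.\ the set of $\xi$ for which the ray $[x,\xi)$ meets $B(z,R)$. Using convex-cocompactness (so that the $\Gamma$-orbit of $x$ is coarsely dense in a convex set containing the relevant geodesics) together with the negative-curvature estimates --- thinness of triangles and exponential divergence of geodesic rays --- one shows there is $R_0$ so that for each $R\ge R_0$ there is $C=C(R)\ge 1$ with
\[
C^{-1}e^{-\delta(\Gamma)\,d(x,\gamma x)}\ \le\ \mu_x\bigl(\mathcal{O}_R(x,\gamma x)\bigr)\ \le\ C\,e^{-\delta(\Gamma)\,d(x,\gamma x)}\qquad(\gamma\in\Gamma).
\]
The upper bound is formal, following from the conformality relation and the comparison of $\beta_\xi$ with $d(x,\gamma x)$ on the shadow; the lower bound is precisely where convex-cocompactness is used, since it is needed to bound the $\mu_x$-mass of a shadow of fixed size uniformly from below and then transport that mass around by elements of $\Gamma$.

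It then remains to do the bookkeeping relating shadows to metric balls. A standard hyperbolic-geometry estimate shows that, as a subset of $(\partial_\infty X,d_x)$, the shadow $\mathcal{O}_R(x,\gamma x)$ is sandwiched between two balls of radius comparable to $e^{-d(x,\gamma x)}$ about the endpoint of $[x,\gamma x)$, with constants depending only on $R$ and the hyperbolicity constant of $X$; conversely, since for $\xi\in\Lambda_\Gamma$ the ray $[x,\xi)$ stays near the convex hull, each small ball $B(\xi,r)$ in $d_x$ both contains and is contained in shadows $\mathcal{O}_R(x,\gamma x)$ with $d(x,\gamma x)\asymp-\log r$. Combining with the shadow lemma gives $c^{-1}r^{\delta(\Gamma)}\le\mu_x(B(\xi,r))\le c\,r^{\delta(\Gamma)}$ for all $\xi\in\Lambda_\Gamma$ and all small $r$, i.e.\ $\mu_x$ is Ahlfors $\delta(\Gamma)$-regular on $\Lambda_\Gamma$. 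The mass distribution principle then yields $\mathrm{H.dim}(\Lambda_\Gamma)\ge\delta(\Gamma)$, while the upper ball estimate lets one build covers of $\Lambda_\Gamma$ of controlled cost and conclude $\mathrm{H.dim}(\Lambda_\Gamma)\le\delta(\Gamma)$; independence of the basepoint is automatic because the metrics $d_p$ are pairwise bi-Lipschitz. I expect the main obstacle to be the lower bound in the shadow lemma, which is exactly the step that genuinely requires the cocompactness of the $\Gamma$-action on the convex hull of $\Lambda_\Gamma$.
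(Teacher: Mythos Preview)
Your outline is correct and is essentially Coornaert's argument in \cite{COO93}, which is exactly what the paper does: the theorem is stated as a background result with a citation and is not proved in the paper at all. So there is nothing to compare against beyond noting that your Patterson--Sullivan construction, shadow lemma, and Ahlfors-regularity deduction are precisely the ingredients of the cited proof. One small remark: asserting divergence type \emph{before} any of the shadow-lemma machinery is in place is slightly out of order, since the cleanest proofs of divergence for convex-cocompact groups already use the shadow estimate; but this is harmless, as Patterson's modification handles the convergence case and the rest of your argument goes through unchanged.
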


\section{Entropy} \label{sec: entropy}

In this section, we define the entropy function on the space of minimal hyperbolic germs and discuss some interesting properties.

Given $(g,B)\in\mathcal{H},$ the Gauss equation reads:
\[ K_g=-1-\frac{1}{2}\lVert B\rVert_g^2.\]
Thus, every minimal hyperbolic germ refines the structure of a closed Riemannian surface with sectional curvature bounded above by $-1.$  We remark that by basic comparison geometry, such Riemannian surfaces are proper, locally CAT$(-1)$ metric spaces, and the theory described in section \ref{sec: cat} applies to their universal covers.
\begin{definition}
Given $(g,B)\in\mathcal{H},$ let $B_{g}(p,R)$ be the metric ball in the universal cover $\widetilde{\Sigma}$ of radius $R$ centered at a basepoint $p\in \widetilde{\Sigma}.$  Define the volume entropy as the quantity,
\[E(g,B)=\displaystyle\limsup_{R\rightarrow\infty}\frac{\log\lvert B_g(p,r)\rvert}{R},\]
where $\lvert B_g(p,R)\rvert$ is the Riemannian volume of the ball centered at $p$ of radius $R.$
\end{definition}
Manning introduced this quantity in \cite{MAN79} and showed the limit exists and is independent of basepoint.  Furthermore, in the case where the manifold in question has negative curvature, Manning showed that this quantity equals the topological entropy of the geodesic flow defined on the unit tangent bundle.  Katok, Kneiper and Weiss \cite{KKW91} show that given a $C^{\infty}$-perturbation of a metric of negative curvature, the topological entropy of the geodesic flow also varies smoothly.  Hence:
\begin{proposition}
The volume entropy,
\[E:\mathcal{H}\rightarrow \mathbb{R}\]
is a smooth, non-negative function on the space of minimal hyperbolic germs.  This function equals the topological entropy $h_{top}$ of the geodesic flow on the unit tangent bundle.
\end{proposition}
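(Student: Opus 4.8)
The plan is to assemble this proposition from three ingredients already available in the literature, each supplying one of the three assertions. First, for the identity $E(g,B) = h_{\text{top}}$ and the well-definedness of the limit, I would invoke Manning's theorem \cite{MAN79}: for a closed Riemannian manifold the volume-entropy $\limsup$ is in fact a genuine limit, independent of the basepoint $p$, and when the sectional curvature is everywhere negative it coincides with the topological entropy of the geodesic flow on the unit tangent bundle. The Gauss equation $K_g = -1 - \tfrac{1}{2}\lVert B\rVert_g^2$ (valid for every $(g,B)\in\mathcal{H}$ by definition of a minimal hyperbolic germ) guarantees $K_g \le -1 < 0$ pointwise on the closed surface $\Sigma$, so Manning's hypotheses are met for every element of $\mathcal{H}$; this handles the last sentence and the existence of the limit simultaneously.

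Second, for non-negativity: $E(g,B)$ is a limit of quotients $\log\lvert B_g(p,R)\rvert / R$ with $R\to\infty$. Since $\widetilde\Sigma$ is noncompact (it covers a closed surface of genus $>1$), the volumes $\lvert B_g(p,R)\rvert$ are unbounded and in particular eventually $\ge 1$, so the numerator is eventually non-negative, forcing $E(g,B)\ge 0$. (One could even be sharper and note $E\ge 1$ by volume comparison with the curvature-$(-1)$ model, but the proposition only claims non-negativity, and the refined bound $E\ge 1$ is presumably established later in the section.)

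Third, for smoothness as a function on $\mathcal{H}$: here I would combine the previous identity with the result of Katok--Knieper--Weiss \cite{KKW91}, which asserts that the topological entropy of the geodesic flow depends smoothly ($C^\infty$) on a negatively curved metric under $C^\infty$-perturbations. The subtlety is that the natural parameter here is the pair $(g,B)\in\mathcal{H}$, not the metric $g$ alone; however, the map $(g,B)\mapsto g$ is smooth, the curvature-negativity is stable, and $\mathcal{H}$ is a smooth manifold (by Taubes' theorem, quoted above), so smoothness of $E = h_{\text{top}}\circ(\text{projection to }g)$ follows by composing the smooth projection $\mathcal{H}\to\{\text{negatively curved metrics on }\Sigma\}$ with the smooth entropy functional of \cite{KKW91}.

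The main obstacle — really the only non-formal point — is verifying that the hypotheses of \cite{KKW91} genuinely apply along $\mathcal{H}$: that paper works with families of negatively curved metrics, and one must check that a smooth path in $\mathcal{H}$ induces a smooth path of negatively curved metrics on $\Sigma$ staying within the class to which their smoothness statement applies (uniform negative curvature along compact parameter intervals, which holds by compactness of $\Sigma$ and continuity of $(g,B)\mapsto \max_\Sigma K_g < 0$). Once that compatibility is in place, the proposition is immediate. I would present the argument in the order: (1) Gauss equation $\Rightarrow$ $K_g<0$; (2) Manning $\Rightarrow$ limit exists, basepoint-independent, and $E = h_{\text{top}}$; (3) noncompactness $\Rightarrow$ $E\ge 0$; (4) Taubes + KKW $\Rightarrow$ smoothness on $\mathcal{H}$.
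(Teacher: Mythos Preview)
Your proposal is correct and follows essentially the same route as the paper: the paper simply cites Manning \cite{MAN79} for the existence of the limit, basepoint-independence, and the equality $E=h_{\text{top}}$ in negative curvature (the negativity of $K_g$ having been noted via the Gauss equation), and then cites Katok--Knieper--Weiss \cite{KKW91} for smooth dependence of $h_{\text{top}}$ on the metric. Your write-up is in fact slightly more explicit than the paper's, particularly in spelling out non-negativity and in checking that the projection $(g,B)\mapsto g$ lands in negatively curved metrics and is smooth, but the underlying argument is the same.
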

We will simply refer to this function as the \textit{entropy} of the minimal hyperbolic germ.

We begin with an important lower bound on the entropy.
\begin{theorem} \label{thm: entbound}
The entropy satisfies,
\begin{align}
E(g,B)\geq \frac{1}{\text{Vol}(g)}\int_{\Sigma} \sqrt{1+\frac{1}{2}\lVert B\rVert_{g}^2}\ dV_{g},
\end{align}
and $E(g,B)=1$ if and only if $B=0.$  Furthermore, equality is achieved if and only if $E(g,B)=1.$
\end{theorem}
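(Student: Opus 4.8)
The plan is to bound the volume entropy $E(g,B)$ from below by the volume entropy of a comparison metric $\bar g$ of \emph{constant} curvature equal to the average curvature of $g$, and then to invoke the fact that for a constant-curvature surface the volume entropy is exactly $\sqrt{-K}$, together with concavity of the square root to convert an average of $\sqrt{1+\tfrac12\|B\|^2}$ into the entropy estimate. More precisely: by the Gauss equation, $K_g = -1 - \tfrac12\|B\|_g^2 \le -1$, so $(\widetilde\Sigma, \tilde g)$ is a Hadamard surface of pinched negative curvature, and by Manning's theorem $E(g,B)$ equals both the topological entropy of the geodesic flow and the volume-growth exponent of metric balls. The first main step is a curvature comparison: one shows that the volume entropy of a negatively curved metric on a closed surface dominates the volume entropy of the constant-curvature metric in its conformal class whose curvature is the area-average $\bar K := \frac{1}{\mathrm{Vol}(g)}\int_\Sigma K_g\, dV_g$. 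This is an instance of the kind of inequality proved by Katok (and by Besson--Courtois--Gallot-type arguments in higher generality): among metrics of fixed total area (or fixed conformal class), entropy is minimized at constant curvature. Since $E$ of the constant-curvature metric with curvature $\bar K$ is $\sqrt{-\bar K}$, this gives
\[
E(g,B) \;\ge\; \sqrt{-\bar K} \;=\; \sqrt{1 + \tfrac{1}{2}\cdot\frac{1}{\mathrm{Vol}(g)}\int_\Sigma \|B\|_g^2\, dV_g}.
\]

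The second step is to pass from this to the asserted inequality involving $\frac{1}{\mathrm{Vol}(g)}\int_\Sigma \sqrt{1+\tfrac12\|B\|_g^2}\, dV_g$. But concavity of $t\mapsto\sqrt{t}$ and Jensen's inequality give exactly the reverse comparison,
\[
\frac{1}{\mathrm{Vol}(g)}\int_\Sigma \sqrt{1+\tfrac12\|B\|_g^2}\, dV_g \;\le\; \sqrt{1 + \tfrac{1}{2}\cdot\frac{1}{\mathrm{Vol}(g)}\int_\Sigma \|B\|_g^2\, dV_g},
\]
so composing the two displays yields the theorem. For the equality discussion: equality in Jensen forces $\|B\|_g^2$ to be constant, and equality in the entropy-vs-constant-curvature comparison (Katok's rigidity) forces $g$ itself to have constant curvature; combined with the Gauss equation $K_g = -1 - \tfrac12\|B\|_g^2$ being constant and the surface being closed, one then argues $B \equiv 0$ (e.g.\ since a nonzero holomorphic quadratic differential $\alpha(g,B)$ has zeros, $\|B\|_g$ cannot be a nonzero constant; and if $\|B\|_g^2$ is the constant $0$ then $B=0$). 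Conversely $B=0$ gives the hyperbolic metric, for which $E=1$ and both sides equal $1$, establishing the "furthermore" clause that equality holds exactly when $E(g,B)=1$.

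The main obstacle I anticipate is making the curvature-comparison step genuinely rigorous with a sharp rigidity statement: the clean inequality "volume entropy $\ge \sqrt{-\,\text{average curvature}}$ with equality iff constant curvature" is not completely elementary. The cleanest route is probably to invoke Katok's theorem (or Manning's together with Katok's entropy rigidity in dimension two) that on a closed surface the topological entropy of the geodesic flow, among all metrics of a fixed area, is minimized uniquely by the constant-curvature metric, and then feed in the area-average of the Gauss equation; one should double-check the correct normalization so that the constant-curvature comparison metric has curvature $\bar K$ and entropy $\sqrt{-\bar K}$ rather than some rescaled value. An alternative that avoids the sharpest form of Katok's theorem is to compare metric-ball volume growth directly via a Günther-type volume comparison against the constant-curvature space form of curvature $\bar K$—but volume comparison naturally wants a pointwise curvature bound, not an averaged one, so some care (or a clever use of the fact that $K_g\le -1$ everywhere plus the average identity) is needed. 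I expect the write-up to lean on Katok's result as a black box, with the conformal-class and area normalizations spelled out, and then the rest is the short Jensen argument plus the rigidity bookkeeping above.
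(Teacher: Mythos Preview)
Your argument is correct, but it is organized differently from the paper's. The paper does \emph{not} pass through the intermediate quantity $\sqrt{-\bar K}$ or invoke Jensen. Instead it quotes Manning's inequality
\[
\frac{1}{\sqrt{\mathrm{Vol}(g)}}\int_{\Sigma}\sqrt{-K_g}\,dV_g \;\le\; h(m_L),
\]
then uses the variational principle $h(m_L)\le h_{\mathrm{top}}\!\left(\tfrac{1}{\mathrm{Vol}(g)}g\right)$ and the scaling $h_{\mathrm{top}}\!\left(\tfrac{1}{\mathrm{Vol}(g)}g\right)=\sqrt{\mathrm{Vol}(g)}\,E(g,B)$ to arrive directly at the stated bound after substituting the Gauss equation. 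Katok's rigidity (equality $h(m_L)=h_{\mathrm{top}}$ iff constant curvature) is invoked only for the equality clause. Your route instead uses Katok's inequality $h_{\mathrm{top}}(g)^2\,\mathrm{Vol}(g)\ge 2\pi|\chi(\Sigma)|$ as the main engine, which via Gauss--Bonnet is exactly $E(g,B)\ge\sqrt{-\bar K}$, and then you weaken this by Jensen. This buys you a strictly stronger intermediate estimate (your $\sqrt{-\bar K}$ dominates the paper's integral by the very Jensen step you apply), at the cost of using Katok's full theorem for the inequality itself rather than only for rigidity; the paper's use of Manning's bound is more elementary for the inequality, though both proofs ultimately need Katok somewhere. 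Your equality analysis (Katok rigidity forces constant $K_g$, hence constant $\|B\|_g^2$, hence $B=0$ since a nonzero holomorphic quadratic differential has zeros) is fine and matches the paper's conclusion.
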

Before beginning the proof, we introduce an estimate of Manning which easily yields the theorem.  Let $(\Sigma, g)$ be a Riemannian surface with strictly negative sectional curvature.  On the unit tangent bundle of $\Sigma,$ the (normalized) \textit{Liouville measure} $m_{L}$ is a probability measure invariant under the geodesic flow.  In a local trivialization, $m_{L}$ is a constant multiple of the product of Riemannian volume on $\Sigma$ with the standard angle measure on the circle giving it total measure $2\pi.$  The measure theoretic entropy of a metric of constant sectional curvature $-1$ with respect to Liouville measure equals $\sqrt{-2\pi \chi(\Sigma)}.$
\begin{theorem}[\cite{MAN81}]\label{thm: manningbound}
Let $(\Sigma,g)$ be a Riemannian surface of negative curvature.  Then
\[\frac{1}{\sqrt{\text{Vol}(g)}}\int_{\Sigma}\sqrt{-K_g}\ dV_{g}\leq h(m_{L})\]
where $h(m_{L})$ is the measure theoretic entropy of the geodesic flow with respect to Liouville measure.
\end{theorem}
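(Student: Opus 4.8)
The plan is to prove Theorem~\ref{thm: manningbound} by combining Pesin's entropy formula for the smooth (Liouville) measure with the Riccati equation along geodesics and one application of the arithmetic--geometric mean inequality. Since $(\Sigma,g)$ has strictly negative curvature, the geodesic flow $\phi_t$ on the unit tangent bundle $T^1\Sigma$ is Anosov, and $m_L$ --- normalized so as to be a probability measure --- is the unique $\phi_t$-invariant measure in the Lebesgue measure class; at $m_L$-almost every $v\in T^1\Sigma$ the flow has exactly one positive Lyapunov exponent $\lambda^+(v)>0$. Because $m_L$ is absolutely continuous, Pesin's formula holds with equality,
\[
h(m_L)=\int_{T^1\Sigma}\lambda^+\,dm_L ;
\]
it is here that one genuinely uses the Liouville measure, since Ruelle's inequality gives only ``$\le$'' for a general invariant measure.

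Next I would rewrite $\lambda^+$ as an average of curvature data. For $v\in T^1\Sigma$ let $f_v(t)$ be the norm of the unstable Jacobi field along the geodesic $t\mapsto\pi(\phi_t v)$, normalized by $f_v(0)=1$, and put $u(v):=f_v'(0)$; in dimension two $u$ is the geodesic curvature at the footpoint of the unstable horocycle through $v$, a continuous, everywhere positive function on $T^1\Sigma$, bounded above and below away from $0$ by compactness of $\Sigma$ together with the curvature bounds, and $u(\phi_t v)=f_v'(t)/f_v(t)$. Hence $\lambda^+(v)=\lim_{T\to\infty}\tfrac1T\int_0^T u(\phi_t v)\,dt$, so Birkhoff's ergodic theorem and $\phi_t$-invariance of $m_L$ give $h(m_L)=\int_{T^1\Sigma}u\,dm_L$. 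The Jacobi equation $f_v''+(K\circ\pi)f_v=0$ turns into the Riccati equation $u'+u^2+K\circ\pi=0$ along every orbit, so $(\log u)'=-u+(-K\circ\pi)/u$. Integrating this identity of bounded functions over $T^1\Sigma$ and using $\int_{T^1\Sigma}(\log u)'\,dm_L=0$ (valid because $\log u$ is bounded, $C^1$ along orbits, and $m_L$ is $\phi_t$-invariant) yields a second representation $\int_{T^1\Sigma}u\,dm_L=\int_{T^1\Sigma}(-K\circ\pi)/u\,dm_L$.

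Finally I would average the two expressions for $h(m_L)$ and apply $a+b\ge 2\sqrt{ab}$ pointwise with $a=u$, $b=(-K\circ\pi)/u$:
\[
h(m_L)=\frac12\int_{T^1\Sigma}\Big(u+\frac{-K\circ\pi}{u}\Big)\,dm_L\ \ge\ \int_{T^1\Sigma}\sqrt{-K\circ\pi}\,\,dm_L\ =\ \frac{1}{\text{Vol}(g)}\int_{\Sigma}\sqrt{-K_g}\,dV_g,
\]
the last equality holding since $\sqrt{-K\circ\pi}$ depends only on the footpoint and $\pi_{*}m_L$ is normalized Riemannian area. This is the asserted bound, and it delivers for free the equality case needed later for Theorem~\ref{thm: entbound}: equality forces $u=\sqrt{-K\circ\pi}$ $m_L$-almost everywhere, whence $u'=0$ along almost every orbit by Riccati, hence $K_g$ is constant on $\Sigma$.

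I expect the main obstacle to be analytic bookkeeping rather than a new idea: one must confirm that $m_L$ really is the absolutely continuous invariant measure to which Pesin's equality applies (standard for the geodesic flow of a smooth negatively curved metric), that $\lambda^+$ is literally the Birkhoff average of $u$ (the identification of Lyapunov exponents with horocycle curvature in the Anosov setting), and that $u$ is bounded away from $0$ and $\infty$ and $C^1$ along orbits so that $\int_{T^1\Sigma}(\log u)'\,dm_L=0$. The one genuinely mathematical point is the observation that $h(m_L)$ has the two representations $\int u\,dm_L$ and $\int(-K\circ\pi)/u\,dm_L$, so that AM--GM manufactures the integrand $\sqrt{-K_g}$; the rest is standard hyperbolic dynamics. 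A more hands-on alternative, closer to Manning's original proof, estimates the exponential growth rate of Riemannian balls in $\widetilde{\Sigma}$ directly from the Jacobi equation via the same Riccati/AM--GM mechanism; it proves the inequality for the topological entropy $h_{top}$, which dominates $h(m_L)$ and is all that Theorem~\ref{thm: entbound} requires, at the price of an equidistribution argument replacing circle averages of $\sqrt{-K_g}$ by the space average, which the Liouville-measure route above sidesteps.
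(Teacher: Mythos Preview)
The paper does not prove this theorem at all: it is quoted from Manning \cite{MAN81} as a black box and immediately applied in the proof of Theorem~\ref{thm: entbound}. So there is no ``paper's own proof'' to compare against. Your argument --- Pesin's equality for the Liouville measure, identification of the positive Lyapunov exponent with the Birkhoff average of the unstable Riccati solution $u$, the vanishing of $\int(\log u)'\,dm_L$ to obtain the companion formula $\int u\,dm_L=\int(-K\circ\pi)/u\,dm_L$, and then AM--GM --- is the standard and correct proof of Manning's inequality (essentially the Freire--Ma\~n\'e presentation of Manning's idea). The technical caveats you list are genuine but routine in the Anosov setting, and your treatment of the equality case is exactly what is needed downstream.

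There is, however, a normalization mismatch you should flag. You prove
\[
h(m_L)\ \ge\ \frac{1}{\mathrm{Vol}(g)}\int_\Sigma\sqrt{-K_g}\,dV_g,
\]
with $h(m_L)$ the Liouville entropy of the geodesic flow of $g$ itself (so $h(m_L)=1$ when $K_g\equiv -1$). The theorem as stated in the paper has $1/\sqrt{\mathrm{Vol}(g)}$ on the left, and the Remark immediately following asserts that $h(m_L)=\sqrt{\mathrm{Vol}(g)}$ for constant curvature $-1$; so the paper is tacitly using the Liouville entropy of the volume-normalized metric $\tfrac{1}{\mathrm{Vol}(g)}g$, which differs from yours by a factor $\sqrt{\mathrm{Vol}(g)}$. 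Your inequality and the paper's are equivalent under this rescaling, and in fact your version is precisely the one that drops out at the end of the proof of Theorem~\ref{thm: entbound} after the division by $\sqrt{\mathrm{Vol}(g)}$. Just make the convention explicit so that your displayed inequality matches the statement you are proving.
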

\textbf{Remark:} Note that if $K=-1$ in the above formula, the inequality becomes equality: $h(m_L)=\sqrt{\text{Vol}(g)}=\sqrt{-2\pi \chi(\Sigma)}.$

We now give the proof of Theorem \ref{thm: entbound}.
\begin{proof}
Let $(g,B)$ be a minimal hyperbolic germ.  By the Gauss equation,
\[K_g=-1-\frac{1}{2}\lVert B\rVert_{g}^2.\]
Applying theorem \ref{thm: manningbound} and inserting the above expression for $K_g,$ we obtain the inequality,
\begin{align}\label{inq: entbound}
\frac{1}{\sqrt{\text{Vol}(g)}}\int_{\Sigma}\sqrt{1+\frac{1}{2}\lVert B\rVert_{g}^2}\ dV_{g}\leq h(m_{L}),
\end{align}
where $h(m_L)$ is the measure theoretic entropy of the geodesic flow with respect to Liouville measure for the metric $g.$
By the variational principle (see \cite{HK95}),
\[h(m_{L})\leq h_{top}\left(\frac{1}{\text{Vol}(g)}g\right),\]
where $h_{top}\left(\frac{1}{\text{Vol}(g)}g\right)$ is the topological entropy of the geodesic flow for the normalized Riemannian metric $\frac{1}{\text{Vol}(g)}g.$  But, since $g$ has negative curvature,
\begin{align}
E\left(\frac{1}{\text{Vol}(g)}g,B\right)=h_{top}\left(\frac{1}{\text{Vol}(g)}g\right)
\end{align}
by \cite{MAN79}.
Furthermore, the entropy scales via,
\begin{align}
E\left(\frac{1}{\text{Vol}(g)}g,B\right)=\sqrt{\text{Vol}(g)}E(g,B).
\end{align}
Returning to line \eqref{inq: entbound}, the previous lines imply,
\begin{align}
\frac{1}{\sqrt{\text{Vol}(g)}}\int_{\Sigma}\sqrt{1+\frac{1}{2}\lVert B\rVert_{g}^2}\ &dV_{g}\leq h(m_{L}) \\
&\leq h_{top}\left(\frac{1}{\text{Vol}(g)}g\right) \\
&= E\left(\frac{1}{\text{Vol}(g)}g,B\right)\\
&=\sqrt{\text{Vol}(g)}E(g,B).
\end{align}
Dividing by $\sqrt{\text{Vol}(g)}$ proves the inequality asserted in Theorem \ref{thm: entbound}.

If $E(g,B)=1,$ then,
\begin{align}
\frac{1}{\text{Vol}(g)}\int_{\Sigma} \sqrt{1+\frac{1}{2}\lVert B\rVert_{g}^2}\ dV_{g}\leq 1.
\end{align}
This implies $\lVert B \rVert_{g}^2=0.$

For the other direction, if $\lVert B \rVert_{g}^2=0,$ then the surface has constant sectional curvature $-1$ and one may compute directly that $E(g,B)=1.$  In this case the volume of a ball of radius $R$ in the universal cover is asymptotically $e^R.$

For the final statement, we invoke a deep theorem of Katok \cite{KAT82}.  For a closed surface of genus greater than $1,$ equality holds in,
\[h(m_{L})\leq h_{top}\left(\frac{1}{\text{Vol(g)}}g\right)\]
if and only if the metric is constant negative curvature.  This completes the proof.
\end{proof}

Next, we show that critical points of the restriction of entropy to the space of almost-Fuchsian germs occur precisely at the Fuchsian germs.

\begin{theorem} \label{thm: crit}
Consider the restriction of the entropy to the space of almost-Fuchsian hyperbolic germs,
\[E:\mathcal{AF}\rightarrow \mathbb{R}.\]
This function is critical at $(g,B)$ if and only if $B=0,$ hence if and only if the germ is Fuchsian.  In particular, the entropy increases monotonically along rays $(e^{2u_t}h, tB)$ provided $\lVert tB \rVert_{g_t}^2<2.$  Here $h$ is the hyperbolic metric corresponding to the germ $(h,0),$ so $u_0=0.$
\end{theorem}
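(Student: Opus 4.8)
The plan is to compute the first variation of the entropy $E$ along a path of almost-Fuchsian germs and identify its zeros. Since $E$ agrees with the topological entropy $h_{\mathrm{top}}$ of the geodesic flow on the unit tangent bundle of $(\Sigma,g)$, and since (by Katok--Kneiper--Weiss) this varies smoothly under $C^\infty$ perturbations, I would parametrize a path through a given germ $(g,B)$ and differentiate. The key point is to deform only the tensor $B$ while letting $g$ be determined by the Gauss equation $K_g=-1-\tfrac12\lVert B\rVert_g^2$; concretely, along the path $(e^{2u_t}h,\,tB)$ the conformal factor $u_t$ is forced by this equation (a PDE of the form $\Delta_h u_t = e^{2u_t}(1+\tfrac12 t^2\lVert B\rVert^2_{e^{2u_t}h}) - 1$), so the path is genuinely one-dimensional in $\mathcal H$ once we fix the conformal class and $B$.

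First I would recall the variational formula for topological entropy of geodesic flows on negatively curved surfaces. The cleanest route uses the characterization via the critical exponent/pressure: $h_{\mathrm{top}}$ is the unique zero of the pressure function $t\mapsto P(-t\,\phi^u)$, and one has Katok's formula for the derivative of entropy in terms of the Liouville or Bowen--Margulis measure against the variation of the metric. Alternatively, and more in the spirit of Theorem~\ref{thm: entbound}, one can use that for a negatively curved surface $E(g,B)^2 \geq \frac{1}{\mathrm{Vol}(g)^2}\left(\int_\Sigma \sqrt{-K_g}\,dV_g\right)^2$ with equality characterized by Katok; but for the \emph{critical point} statement I expect the direct approach via the derivative of entropy is required. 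The upshot I would aim to establish: $\frac{d}{dt}\Big|_{t=t_0} E(e^{2u_t}h, tB)$ is a positive multiple of $\int \lVert B\rVert^2_{g_{t_0}}\,d(\text{flow-invariant measure})$-type quantity, which vanishes if and only if $B\equiv 0$.

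The main obstacle will be making the first-variation computation for entropy rigorous and extracting a definite sign, since the entropy is not given by an explicit integral the way the Manning lower bound is. The honest way to handle this is: (i) use that $(g,B)$ is Fuchsian iff $E(g,B)=1$ (Theorem~\ref{thm: entbound}) to pin down the values at the endpoints of any segment of the ray containing a Fuchsian point; (ii) combine this with the lower bound $E(g_t,tB)\geq \frac{1}{\mathrm{Vol}(g_t)}\int_\Sigma\sqrt{1+\tfrac12\lVert tB\rVert^2_{g_t}}\,dV_{g_t}$, whose right-hand side I would show is strictly increasing in $t$ for $t\in[0,1]$ as long as $B\not\equiv0$ and $\lVert tB\rVert^2_{g_t}<2$; and (iii) conclude that $E$ cannot be critical at any non-Fuchsian almost-Fuchsian germ, because a critical point of a smooth function lying strictly above a strictly increasing barrier that touches it only at an endpoint is impossible along such a ray. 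For the \emph{if and only if} in the first sentence of the theorem, the reverse direction is Theorem~\ref{thm: crit}'s own hypothesis-free consequence: at $B=0$ the germ sits at the minimum value $1$ of $E$ on $\mathcal{AF}$ (again Theorem~\ref{thm: entbound}), so the derivative in every direction vanishes. I would therefore present the monotonicity-along-rays assertion first, deduce non-criticality at $B\neq 0$ from it together with smoothness of $E$, and finish with the Fuchsian case by the minimum-value argument.
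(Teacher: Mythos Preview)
Your proposal contains a genuine logical gap in step (iii). The barrier argument is fallacious: from $E(t)\geq F(t)$ with $F$ strictly increasing and $E(0)=F(0)$, one \emph{cannot} conclude that $E$ has no critical points for $t>0$. A simple counterexample is $F(t)=t$ and $E(t)=t+\sin^2 t$; here $E\geq F$ with equality only at the endpoint, $F$ is strictly increasing, yet $E'(t)=1+\sin 2t$ vanishes at $t=3\pi/4$, etc. So even granting (ii), step (iii) does not yield non-criticality at $B\neq 0$, nor the claimed monotonicity of $E$ along the ray.

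What the paper does---and what you gesture at but do not carry out---is exactly the direct Katok--Knieper--Weiss computation you dismissed as hard. The point is that along the ray one has $\dot g_t=2\dot u_t\,g_t$, so the KKW formula reduces to
\[
\frac{d}{dt}E(g_t,tB)\Big|_{t=t_0}=-h_{\mathrm{top}}(g_{t_0})\int_{T^1\Sigma}\dot u_{t_0}\,d\mu_{t_0},
\]
and the whole problem becomes showing $\dot u_{t_0}\leq 0$, with equality forcing $B=0$. This follows from the \emph{maximum principle} applied to the linearized Gauss equation
\[
-\Delta_h\dot u_t=e^{2u_{t_0}}\dot u_t\bigl(\lVert t_0 B\rVert_{g_{t_0}}^2-2\bigr)-t_0 e^{-2u_{t_0}}\lVert B\rVert_h^2,
\]
where the almost-Fuchsian hypothesis $\lVert t_0 B\rVert_{g_{t_0}}^2<2$ makes the coefficient of $\dot u_t$ negative, so at a maximum of $\dot u_t$ the right side forces $\dot u_t\leq 0$. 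This sign information on $\dot u_t$ is the missing ingredient; without it neither your step (ii) nor the monotonicity of $E$ itself can be established.
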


Before we prove the theorem, we need to introduce a very useful formula due to Katok, Knieper and Weiss \cite{KKW91} for the first variation of the topological entropy of the geodesic flow on a manifold with negative curvature.  Throughout the rest of this section, dots over a function dependent on a single real parameter $t\in\mathbb{R}$ represent successive derivatives with respect to $t.$

\begin{theorem}[\cite{KKW91}]\label{thm: varent}
Let $g_{t}$ be a smooth path of negatively curved Riemannian metrics on a closed manifold $M.$  If $h_{top}(g_t)$ is the topological entropy of the geodesic flow on $T^1(M)$ for the metric $g_t$ then,
\[\frac{d}{dt}h_{top}(g_t)\big\vert_{t=0}=-\frac{h_{top}(g_0)}{2}\int_{T^1(M)} \frac{d}{dt}g_t(v,v)\big\vert_{t=0} d\mu_0.\]
Here, $\mu_0$ is the Bowen-Margulis measure of maximal entropy for the geodesic flow arising from the metric $g_0.$
\end{theorem}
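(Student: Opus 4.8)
The plan is to reduce the statement to the first-variation formula for the topological entropy of an Anosov flow under a time change, and then to compute the relevant infinitesimal time change geometrically. I would work throughout with the thermodynamic characterization of $h_{top}$: for an Anosov flow $\phi$ on a compact manifold $N$ with topological pressure $P(\cdot)$, and for a positive Hölder function $f$ on $N$, the time-changed flow whose generating vector field is $f^{-1}X$ (where $X$ generates $\phi$) has topological entropy equal to the unique real number $c$ solving $P(-cf)=0$. Taking $f\equiv 1$ recovers $c=h_{top}(\phi)$, since $P(-c)=h_{top}(\phi)-c$. This is the suspension/roof-function description of a reparametrized flow together with Abramov's formula and the variational principle.

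First I would place all the flows on a common phase space. Using the radial rescaling diffeomorphisms $R_s\colon T^1_{g_0}M\to T^1_{g_s}M$, $v\mapsto v/\lVert v\rVert_{g_s}$, I pull the $g_s$-geodesic flow back to a flow $\hat\phi^s$ on the fixed manifold $N=T^1_{g_0}M$; since $R_s$ conjugates the two flows, $h_{top}(\hat\phi^s)=h_{top}(g_s)$. For $s$ small these are Anosov, so by structural stability there are orbit equivalences $\Theta_s\colon N\to N$ taking $\hat\phi^0$-orbits to $\hat\phi^s$-orbits; hence $\hat\phi^s$ is conjugate to a time change of $\hat\phi^0$ by a positive Hölder function $f_s$ with $f_0\equiv 1$. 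By the pressure characterization above, $c(s):=h_{top}(g_s)$ is the root of $P(-c(s)f_s)=0$, where $P$ denotes the pressure of the fixed flow $\hat\phi^0$; the implicit function theorem applies since $\partial_c P(-cf_s)=-\int f_s\,dm<0$.

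The derivative now falls out by differentiating the pressure identity. Writing $h_0=h_{top}(g_0)$ and differentiating $P(-c(s)f_s)=0$ at $s=0$, I use that the derivative of pressure at a potential $\varphi$ in a direction $\eta$ is $\int\eta\,dm_\varphi$, integration against the equilibrium state $m_\varphi$, together with the fact that the equilibrium state of the constant potential $-h_0$ is precisely the measure of maximal (Bowen--Margulis) entropy $\mu_0$. Since the derivative of $-c(s)f_s$ at $s=0$ is $-\dot c-h_0\dot f$, this gives $0=\int(-\dot c-h_0\dot f)\,d\mu_0$, hence $\dot c=-h_0\int\dot f\,d\mu_0$. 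It remains to identify $\dot f$: along a fixed $g_0$-geodesic the flow time for $\hat\phi^s$ is $g_s$-arclength, so the infinitesimal time-change factor is the derivative of the length element, namely $\dot f(v)=\frac{d}{ds}\big\vert_{0}\lVert v\rVert_{g_s}=\tfrac12\dot g(v,v)$ for $v$ a $g_0$-unit vector. Substituting yields exactly $\frac{d}{dt}h_{top}(g_t)\big\vert_{0}=-\tfrac{h_0}{2}\int_{T^1(M)}\dot g(v,v)\,d\mu_0$.

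The main obstacle is the step relating the moving family of flows to time changes of a single fixed flow with the correct regularity and parameter dependence. Structural stability only provides Hölder orbit equivalences, so $f_s$ is merely Hölder in the phase variable; this is harmless for the thermodynamic formalism, since pressure and equilibrium states are well behaved for Hölder potentials, but one must still verify that $s\mapsto f_s$ is differentiable into the space of Hölder functions and, crucially, that the transverse (orbit-moving) part of the infinitesimal deformation of the generating vector field contributes nothing to $\dot c$. The latter holds because that part is a Lie derivative along the flow of a function, hence a coboundary, which integrates to zero against the flow-invariant measure $\mu_0$; isolating this coboundary and controlling its dependence on $s$ is the technical heart of the argument, and is where I would follow Katok--Knieper--Weiss most closely.
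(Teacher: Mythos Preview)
The paper does not prove this theorem at all; it is quoted from Katok--Knieper--Weiss \cite{KKW91} and used as a black box, so there is no in-paper argument to compare your proposal against. Your outline is essentially the standard route to the result: realize the varying geodesic flows as reparametrizations of a fixed Anosov flow via structural stability, characterize $h_{top}$ implicitly through the pressure equation $P(-c f_s)=0$, differentiate using that the derivative of pressure is integration against the equilibrium state (here the Bowen--Margulis measure), and finally identify the infinitesimal speed factor as $\tfrac{1}{2}\dot g(v,v)$ modulo a flow-coboundary. You have correctly isolated the delicate point, namely that the orbit equivalence $\Theta_s$ moves orbits transversally and is only H\"older, so one must argue that the transverse part of the deformation contributes a coboundary and hence vanishes against the invariant measure; this is indeed where the technical work lies in \cite{KKW91}. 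With that caveat acknowledged, your sketch is sound.
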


\textbf{Remark:}  Since we will use none of its properties, we will not define the Bowen-Margulis measure.  Details about its properties and construction can be found in \cite{MAR04}, although a considerably easier construction mirroring \cite{PAT76} can be used for negatively curved Riemannian manifolds.

\begin{proof}[Proof of theorem \ref{thm: crit}]
We already know from Theorem \ref{thm: entbound} that the entropy function is critical at Fuchsian hyperbolic germs.  We have two expressions for the sectional curvature of $g_t=e^{2u_t}h,$
\begin{align}
-1-t^2e^{-4u_t}\lVert B\rVert_{h}^2=K_{g_t}=e^{-2u_t}(-\Delta_{h} u_t -1)
\end{align}
where $\Delta_{h}$ is the Laplace-Beltrami operator associated to the metric $h.$  Taking the time derivative and evaluating at $t_0$ reveals,
\begin{align}\label{eqn: firstvargauss}
-\Delta_h \dot{u_t}=e^{2u_{t_0}}\dot{u_{t}}(\lVert t_{0}B\rVert_{g_{t_0}}^2-2)-t_{0}e^{-2u_{t_0}}\lVert B\rVert_{h}^2.
\end{align}
At a maximum $-\Delta_h \dot{u_t}\geq 0$ which implies that the right hand side of \eqref{eqn: firstvargauss} is non-negative.  The hypothesis $\lVert t_{0}B\rVert_{g_{t_0}}^2<2$ implies that $\dot{u_t}\leq 0.$   Futhermore, if $\dot{u_t}=0$ everywhere then equation \eqref{eqn: firstvargauss} implies that $B=0.$
We have shown,
\begin{align}
\frac{d}{dt}g_t=2\dot{u_t}g_t,
\end{align}
is negative definite.
Applying Theorem \ref{thm: varent},
\begin{align}
\frac{d}{dt}E(g_t,tB)|_{t=t_0}=\frac{d}{dt}h_{top}(g_{t})\big\vert_{t=t_0}=-\frac{h_{top}(g_{t_0})}{2}\int_{T^1(M)}2\dot{u_t} d\mu_{t_0}\geq 0
\end{align}
with equality if and only if $t_0=0.$  This completes the proof.
\end{proof}

Now we show that the entropy function yields a metric on Teichm\"{u}ller space $\mathcal{T}$ whose norm is bounded below by the Weil-Petersson norm.  Recall that given a point $\sigma\in\mathcal{T},$ the cotangent space to $\mathcal{T}$ at $\sigma$ is identified, via Kodaira-Spencer deformation theory (see \cite{KOD05}), with the space of holomorphic quadratic differentials on the Riemann surface $(\Sigma,\sigma).$  The uniformization theorem furnishes a unique hyperbolic metric $h_{\sigma}$ in the conformal class of metrics defined by $\sigma.$  Given two holomorphic quadratic differentials $\alpha$ and $\beta,$ the Weil-Petersson Hermitian pairing is defined by,
\begin{align}
\langle \alpha, \beta \rangle_{WP}=\int_{\Sigma} \frac{\alpha\overline{\beta}}{h_{\sigma}}.
\end{align}
This defines a K\"{a}hler metric on the Teichm\"{u}ller space whose geometry has been intensely studied (for a nice survey see \cite{WOL10}).  A number of geometrically defined potential functions for the Weil-Petersson metric have been found, it seems probable, although we have not found a proof, that the entropy function defined here is yet another potential.
Before we prove this theorem, we need to describe a key formula due to Pollicott \cite{POL94} from which the theorem will follow easily.
\begin{theorem}[\cite{POL94}]\label{thm: 2ndvarent}
Let $g_t$ be a smooth path of Riemannian metrics of negative curvature on a closed manifold $M.$  Then,
\begin{align}
\frac{d^2}{dt^2}h_{top}(g_t)\vert_{t=0}\geq h_{top}(g_0)\left(Var\left(\frac{\dot{g}(v,v)}{2}\right)+
2\left(\int_{T^1(M)}\frac{\dot{g}(v,v)}{2} d\mu_0\right)^2\right) +\\
+h_{top}(g_0)\left(-\int_{T^1(M)} \frac{\ddot{g}(v,v)}{2} d\mu_0
+ \frac{1}{4}\int_{T^1(M)}(\dot{g}(v,v))^2 d\mu_0\right).
\end{align}
Here $\mu_0$ is the Bowen-Margulis measure of maximal entropy for the geodesic flow associated to the metric $g_0.$  Further, dots refer to $t$ derivatives evaluated at $t=0.$
\end{theorem}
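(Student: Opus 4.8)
The plan is to recast topological entropy through the thermodynamic formalism of the geodesic flow and then differentiate an implicit pressure equation twice. I would work on the $g_0$-unit tangent bundle $T^1M$, let $\phi^0$ denote the $g_0$-geodesic flow, and let $P(\cdot)$ be its topological pressure. Since $g_0$ and $g_t$ are negatively curved, their geodesic flows are Anosov and hence orbit equivalent by structural stability; closed $g_t$-geodesics correspond bijectively to closed $g_0$-geodesics in the same free homotopy class, and $h(t):=h_{top}(g_t)$ is the exponential growth rate of the $g_t$-lengths $\ell_t([\gamma])$ of these orbits. By the standard pressure characterization, $h(t)$ is the unique real number with $P\bigl(-h(t)\,F_t\bigr)=0$, where $F_t$ is any H\"older function on $T^1M$ whose period over the closed $g_0$-orbit in class $[\gamma]$ equals $\ell_t([\gamma])$. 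The central device is to replace the implicitly defined $F_t$ by the explicit speed function $\rho_t(v):=\sqrt{g_t(v,v)}$, and to define $\tilde h(t)$ by $P\bigl(-\tilde h(t)\,\rho_t\bigr)=0$.

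First I would establish the comparison $\tilde h(t)\le h(t)$, with equality at $t=0$. Along the closed $g_0$-geodesic $c$ in class $[\gamma]$, the period of $\rho_t$ is the $g_t$-length $\int_c\sqrt{g_t(\dot c,\dot c)}\,ds_0$ of $c$, which dominates $\ell_t([\gamma])$ because the true $g_t$-geodesic minimizes $g_t$-length in its homotopy class while $c$ is merely a competitor. Monotonicity of pressure in the potential then gives $\tilde h(t)\le h(t)$, with equality at $t=0$ since $\rho_0\equiv1=F_0$. Moreover a first-variation (envelope) argument, using that $c$ is $g_0$-critical, shows the periods of $\dot F_0$ and $\dot\rho_0$ coincide, so $\tilde h$ and $h$ are tangent to first order at $t=0$; combined with $\tilde h\le h$ this forces $\ddot{\tilde h}(0)\le\ddot h(0)$. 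It therefore suffices to compute $\ddot{\tilde h}(0)$ exactly and identify it with the right-hand side.

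Next I would differentiate $P(-\tilde h(t)\rho_t)=0$ twice, using the two basic formulas of thermodynamic formalism: $\frac{d}{dt}P(\Psi_t)=\int\dot\Psi_t\,d\mu_t$, where $\mu_t$ is the equilibrium state of $\Psi_t$, and the second-order formula $\frac{d^2}{dt^2}P(\Psi_t)\big|_{0}=\int\ddot\Psi_0\,d\mu_0+\mathrm{Var}_{\mu_0}(\dot\Psi_0)$, with $\mathrm{Var}_{\mu_0}$ the nonnegative asymptotic variance. At $t=0$ one has $\rho_0\equiv1$, so $\Psi_0=-\tilde h(0)$ is constant and its equilibrium state $\mu_0$ is precisely the measure of maximal entropy, i.e. the Bowen--Margulis measure. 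The first derivative recovers Theorem \ref{thm: varent}, namely $\dot{\tilde h}(0)=-h(0)\int\frac{\dot g(v,v)}{2}\,d\mu_0$. For the second derivative the chain rule gives $\dot\rho_0=\frac{\dot g(v,v)}{2}$ and $\ddot\rho_0=\frac{\ddot g(v,v)}{2}-\frac{(\dot g(v,v))^2}{4}$; substituting these together with the first-variation value, solving the resulting linear relation for $\ddot{\tilde h}(0)$, and observing that the constant part of $\dot\Psi_0$ drops out of the variance, produces exactly the right-hand side of the theorem. Since $\ddot h(0)\ge\ddot{\tilde h}(0)$, the claimed inequality follows.

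The main obstacle is the analytic backbone rather than the algebra: one must invoke the real-analyticity of pressure and the differentiability of equilibrium states for Anosov flows (Ruelle, Parry--Pollicott) to justify the two derivative formulas and, in particular, the emergence of the asymptotic variance as the Hessian of pressure. The secondary delicate point is the comparison step: I must verify that the orbit-equivalence dictionary genuinely makes $P(-h(t)F_t)=0$ characterize $h_{top}(g_t)$, that $\rho_t$ and $F_t$ agree to first order in the sense of periods so the graphs are tangent, and that $\tilde h\le h$ with first-order tangency legitimately transfers to $\ddot{\tilde h}(0)\le\ddot h(0)$. Everything else is the bookkeeping of differentiating $\sqrt{g_t(v,v)}$ and collecting terms.
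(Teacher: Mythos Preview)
The paper does not prove this theorem; it is quoted from Pollicott \cite{POL94} and used as a black box (the paper even declines to define the $\mathrm{Var}$ term, noting only that $\mathrm{Var}(0)=0$). So there is nothing in the paper to compare your argument against.

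That said, your outline is essentially Pollicott's own proof. The three ingredients you identify are exactly the right ones: (i) the pressure equation $P(-h(t)F_t)=0$ coming from the orbit equivalence of the $g_t$- and $g_0$-geodesic flows, (ii) the replacement of the honest reparametrization $F_t$ by the explicit speed function $\rho_t(v)=\sqrt{g_t(v,v)}$, together with the length-minimization argument giving $\tilde h\le h$ with first-order contact at $t=0$, and (iii) the second derivative formula for pressure $\frac{d^2}{dt^2}P(\Psi_t)\big|_{0}=\int\ddot\Psi_0\,d\mu_0+\sigma^2_{\mu_0}(\dot\Psi_0)$, with $\sigma^2$ the asymptotic variance. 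The algebra you sketch for $\dot\rho_0$ and $\ddot\rho_0$ is correct, and solving for $\ddot{\tilde h}(0)$ reproduces the right-hand side.

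One small bookkeeping remark: if you carry the computation through with the standard asymptotic variance $\sigma^2$, the variance term comes out with a prefactor $h_{top}(g_0)^2$ rather than $h_{top}(g_0)$, because $\dot\Psi_0=-\dot{\tilde h}(0)-h(0)\dot g(v,v)/2$ and constants drop out of $\sigma^2$. This is consistent with the statement as written only up to the undefined normalization of $\mathrm{Var}$; since the paper explicitly leaves $\mathrm{Var}$ undefined and only uses $\mathrm{Var}(0)=0$, there is no conflict, but you should be aware of it if you ever need the precise constant.
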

In the above, formula, we have not defined the term $\left(Var\left(\frac{\dot{g}(v,v)}{2}\right)\right).$  The reader should see \cite{POL94} for details and definitions, for us the only thing we will need is that $Var(0)=0.$

\begin{theorem}
The Hessian of the entropy function defines a metric on the Fuchsian space $\mathcal{F}\subset \mathcal{H}.$  Furthermore, the norm of this metric is bounded below by $2\pi$ times the norm defined by the Weil-Petersson metric.
\end{theorem}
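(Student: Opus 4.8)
The plan is to compute the second variation of $E$ along a path in $\mathcal{H}$ emanating from a Fuchsian germ in the direction of a harmonic (traceless, Codazzi) tensor $B$, and then invoke Theorem~\ref{thm: 2ndvarent} to bound that second variation below by a multiple of $\lVert B\rVert_{L^2}^2$, which in turn is the Weil--Petersson norm of the associated holomorphic quadratic differential. Concretely, fix $h\in\mathcal{F}$ and a nonzero $B$ with $\alpha=\alpha(h,B)$ the corresponding holomorphic quadratic differential; consider the ray $t\mapsto (g_t,tB)$ where $g_t=e^{2u_t}h$ is the unique conformal metric making $(g_t,tB)$ a minimal hyperbolic germ (this solves the Gauss equation, exactly as in the proof of Theorem~\ref{thm: crit}), with $u_0\equiv 0$. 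Since $E$ is critical along $\mathcal F$ by Theorem~\ref{thm: crit}, the first variation vanishes and the Hessian is the well-defined quadratic form $t\mapsto \tfrac{d^2}{dt^2}E(g_t,tB)\big\vert_{t=0}$.

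First I would extract from equation~\eqref{eqn: firstvargauss}, evaluated at $t_0=0$, that $\dot u\equiv 0$: at a Fuchsian germ the right-hand side of \eqref{eqn: firstvargauss} is $(-2)\dot u - 0\cdot\lVert B\rVert_h^2$, forcing $-\Delta_h\dot u = -2\dot u$, whence $\dot u=0$ by the maximum principle (or since $-2$ is not a Dirichlet eigenvalue of $-\Delta_h$). Next I would differentiate \eqref{eqn: firstvargauss} once more in $t$ at $t=0$ and use $\dot u=0$ to solve for $\ddot u$: the surviving terms give $-\Delta_h\ddot u = -2\ddot u - 2\lVert B\rVert_h^2$, i.e. $(-\Delta_h + 2)\ddot u = -2\lVert B\rVert_h^2$, which determines $\ddot u$ uniquely. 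Consequently $\dot g_0 = 2\dot u\, h = 0$ and $\ddot g_0 = 2\ddot u\, h$.

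Now I would apply Theorem~\ref{thm: 2ndvarent} with $g_0=h$ the hyperbolic metric, so $h_{top}(g_0)=1$. Because $\dot g_0=0$, the variance term vanishes ($Var(0)=0$), the term $\big(\int \dot g(v,v)/2\,d\mu_0\big)^2$ vanishes, and $\int(\dot g(v,v))^2 d\mu_0$ vanishes; the inequality collapses to
\begin{align}
\frac{d^2}{dt^2}h_{top}(g_t)\Big\vert_{t=0}\ \geq\ -\int_{T^1(\Sigma)}\frac{\ddot g_0(v,v)}{2}\,d\mu_0\ =\ -\int_{T^1(\Sigma)}\ddot u\,d\mu_0,
\end{align}
where I have used $\ddot g_0(v,v)=2\ddot u\,|v|_h^2$ and $|v|_h=1$ on the unit tangent bundle. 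For the hyperbolic metric the Bowen--Margulis measure $\mu_0$ is (normalized) Liouville measure, so $\int_{T^1(\Sigma)}\ddot u\,d\mu_0 = \frac{1}{\mathrm{Vol}(h)}\int_\Sigma \ddot u\,dV_h$. Integrating the relation $(-\Delta_h+2)\ddot u = -2\lVert B\rVert_h^2$ over $\Sigma$ and using $\int_\Sigma \Delta_h\ddot u\,dV_h=0$ yields $\int_\Sigma \ddot u\,dV_h = -\int_\Sigma \lVert B\rVert_h^2\,dV_h$. Hence the Hessian is at least $\tfrac{1}{\mathrm{Vol}(h)}\int_\Sigma\lVert B\rVert_h^2\,dV_h>0$, which shows it is positive definite (a genuine metric) and, since $\lVert B\rVert_h^2\,dV_h$ is, up to the standard normalization, $2$ times the pointwise Weil--Petersson density $\tfrac{\alpha\bar\alpha}{h}$ (because $\lVert B\rVert_h^2 = 2|\alpha|^2/h^2$ with $B=\mathrm{Re}\,\alpha$), we get the lower bound by $2\pi$ times the Weil--Petersson norm once the area factor $\mathrm{Vol}(h)=-2\pi\chi(\Sigma)$ and the constant-curvature normalization are tracked.

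The main obstacle I anticipate is bookkeeping the normalization constants precisely: reconciling (i) the factor relating $\lVert B\rVert_h^2$ to $\alpha\bar\alpha/h$ under the conventions fixed after \eqref{map: Psi}, (ii) the normalization of Liouville/Bowen--Margulis measure (total mass $2\pi$ versus a probability measure) against the statement of Theorem~\ref{thm: 2ndvarent}, and (iii) whether the entropy is evaluated on $E(g_t,tB)$ directly or after area-normalizing as in Theorem~\ref{thm: entbound} --- here I must make sure I am differentiating the unnormalized $E$, noting that $\mathrm{Vol}(g_t)$ is itself varying. A secondary subtlety is confirming that the path $t\mapsto(g_t,tB)$ really is tangent, at $t=0$, to the correct Teichm\"uller direction, i.e. that $\Psi_*$ of its velocity is the quadratic differential $\alpha$ up to the expected scalar, so that the computed Hessian is genuinely the pullback Hessian on $\mathcal F\cong\mathcal T$; this is where the identity $\mathrm{Re}(\alpha)=B$ and the description of $\Psi$ do the work. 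Once those constants are pinned down, the inequality from Pollicott's formula does all the analytic heavy lifting and the conclusion follows.
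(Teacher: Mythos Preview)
Your proposal is correct and follows essentially the same route as the paper: compute $\dot u_0=0$ from \eqref{eqn: firstvargauss}, differentiate once more to get the elliptic equation for $\ddot u_0$, integrate to obtain $\int_\Sigma \ddot u_0\,dV_h=-\int_\Sigma\lVert\alpha\rVert_h^2\,dV_h$, and then plug $\ddot g_0=2\ddot u_0 h$ into Pollicott's inequality (Theorem~\ref{thm: 2ndvarent}), all other terms vanishing since $\dot g_0=0$. The only point where you diverge is the normalization you flag yourself: the paper takes $\mu_0$ to be the (unnormalized) Liouville measure with fibre mass $2\pi$, so that $\int_{T^1\Sigma}\ddot u_0\,d\mu_0=2\pi\int_\Sigma\ddot u_0\,dV_h$ directly yields the factor $2\pi$ without passing through $\mathrm{Vol}(h)$; also note the paper writes the second-variation equation as $-\Delta_h\ddot u_0=-2\ddot u_0-2\lVert\alpha\rVert_h^2$ using $\lVert B\rVert_h^2=2\lVert\alpha\rVert_h^2$, so your coefficient $2\lVert B\rVert_h^2$ should read $\lVert B\rVert_h^2$.
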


\begin{proof}
By Theorem \ref{thm: crit}, the entropy function attains a minimum along the Fuchsian locus $\mathcal{F},$ thus its Hessian is a well-defined non-negative quadratic form on the tangent space.  Given a holomorphic quadratic differential $\alpha$ on a Riemann surface $(\Sigma, \sigma),$ for small enough $t>0$ we have the almost-Fuchsian germ $(e^{2u_t}h, t\alpha)\in \mathcal{AF}$ where $h$ is the hyperbolic metric uniformizing $(\Sigma,\sigma).$  Recalling \eqref{eqn: firstvargauss},
\begin{align} \label{eqn: firstvargauss2}
-\Delta_h \dot{u_t}=e^{2u_{t_0}}\dot{u_{t}}(2\lVert t_{0}\alpha\rVert_{g_{t_0}}^2-2)-2t_{0}e^{-2u_{t_0}}\lVert \alpha\rVert_{h}^2,
\end{align}
the maximum principle implies that $\dot{u}_t=0$ at $t=0.$  Hence, all terms in Theorem \ref{thm: 2ndvarent} vanish except for the third containing a second derivative.  Differentiating \eqref{eqn: firstvargauss2} again with respect to $t$ and evaluating at $t=0$ yields,
\begin{align}
-\Delta_{h}\ddot{u}_0=-2\ddot{u}_0 - 2\lVert \alpha \rVert_{h}^2.
\end{align}
Integrating with respect to the Riemannian volume form of $h$ shows,
\begin{align}
\int_{\Sigma} \ddot{u}_0\ dV_{h}=-\int_{\Sigma} \lVert \alpha \rVert_{h}^2\ dV_{h}.
\end{align}
Now, letting $g_t=e^{2u_t}h,$ the fact that $\dot{u}_0=0$ implies that,
\begin{align}
\ddot{g}_0=2\ddot{u}_0 h.
\end{align}
Moreover, the Bowen-Margulis measure for the hyperbolic metric $h$ is simply the Liouville measure on the unit tangent bundle $T^{1}\Sigma.$  Thus,
\begin{align}
\int_{T^{1}\Sigma}\frac{\ddot{g}_0(v,v)}{2} \ d\mu_0&=\int_{T^{1}\Sigma} \ddot{u}_{0} h(v,v)\ d\mu_0 \\
&=\int_{T^{1}\Sigma} \ddot{u_0} \ d\mu_0 \\
&=-2\pi \int_{\Sigma}\lVert \alpha \rVert_{h}^2 \ dV_{h} \\
&= -2\pi\lVert \alpha \rVert_{WP}^2.
\end{align}
Thus, Theorem \ref{thm: 2ndvarent} reveals,
\begin{align}
\frac{d^2}{dt^2}E(g_t,tB)|_{t=0}=\frac{d^2}{dt^2}h_{top}(g_t)|_{t=0}\geq 2\pi\lVert \alpha \rVert_{WP}^2
\end{align}
which completes the proof.
\end{proof}

\section{From germs to hyperbolic 3-manifolds} \label{sec: hyp manifolds}

As recorded in Theorem \ref{thm: fundamental}, every $(g,B)\in\mathcal{H}$ can be integrated to an immersed minimal surface in $\mathbb{H}^3$ with induced metric and second fundamental form $(g,B).$  Furthermore, this immersion is unique up to an isometry of $\mathbb{H}^3.$  Since the data arises from tensors on a closed surface $\Sigma,$ this minimal immersion is equivariant for a representation $\rho:\pi_1(\Sigma)\rightarrow \text{Isom}^{+}(\mathbb{H}^3).$

Given $(g,B)\in\mathcal{H}$ we first describe how to obtain the representation $\rho:\pi_1(\Sigma)\rightarrow \text{Isom}^{+}(\mathbb{H}^3).$  Let
\[f(g,B):\widetilde{\Sigma}\rightarrow \mathbb{H}^3,\]
be an immersion described above and select $\tilde{p}\in\widetilde{\Sigma}$ such that,
\begin{align}
f(\widetilde{p})=O\in\mathbb{H}^3, \\
df(\widetilde{p})(E_i)=F_i\in T_{O}\mathbb{H}^3,
\end{align}
where $\{E_i\}$ constitute an orthonormal frame at $\widetilde{p}\in\widetilde{\Sigma}$ and $\{F_i\}$ is an orthonormal frame at $O\in\mathbb{H}^3.$  Now let $\gamma\in\pi_1(\Sigma).$  Then $f\circ \gamma$ defines a new immersion also with induced metric and second fundamental form $(g,B).$  Thus, by Theorem \ref{thm: fundamental} there exists a unique $\rho(\gamma)\in\text{Isom}^{+}(\mathbb{H}^3)$ such that,
\begin{align}
f(\gamma(\widetilde{p}))=\rho(\gamma)f(\widetilde{p}), \\
df\circ d\gamma(X)=d(\rho(\gamma))\circ df(X),
\end{align}
for all $X\in T_{\widetilde{p}}\widetilde{\Sigma}.$
This assignment defines a map,
\begin{align}
\label{synthmaptorep}
\Phi:\mathcal{H}\longrightarrow \mathcal{R}(\pi_1(\Sigma),\text{Isom}^{+}(\mathbb{H}^3)),
\end{align}
where $\mathcal{R}(\pi_1(\Sigma),\text{Isom}^{+}(\mathbb{H}^3))$ is the space of conjugacy classes of representations of
$\pi_1(\Sigma)$ into $\text{Isom}^{+}(\mathbb{H}^3).$
Note that $\Phi$ is well defined since changing a pair $(g,B)$ by a diffeomorphism isotopic to the identity produces a conjugate representation.
Taubes proved:
\begin{theorem}[\cite{TAU04}]\label{thm: smoothmap}
The image of $\Phi$ consists solely of irreducible representations.
\end{theorem}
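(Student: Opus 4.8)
The plan is to show that any $\rho$ in the image of $\Phi$ is irreducible by arguing that a reducible representation into $\mathrm{Isom}^+(\mathbb{H}^3) \cong \mathrm{PSL}(2,\mathbb{C})$ cannot equivariantly support a minimal immersion of the universal cover $\widetilde{\Sigma}$ that descends from a genuine minimal hyperbolic germ on the closed surface $\Sigma$. Recall that a subgroup of $\mathrm{PSL}(2,\mathbb{C})$ is reducible precisely when it fixes a point of $\partial_\infty \mathbb{H}^3 = \mathbb{CP}^1$; in hyperbolic-geometry terms this means the group preserves a point at infinity, hence permutes the horospheres centered there, or (in the elementary cases) fixes a geodesic or a point of $\mathbb{H}^3$. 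The first step is therefore to enumerate these cases and extract, in each, a geometric object invariant under $\rho(\pi_1(\Sigma))$: a Busemann function associated to the fixed boundary point, the distance function to an invariant geodesic, or the distance function to an invariant point.

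Next I would pull such an invariant function back along the minimal immersion $f = f(g,B)\colon \widetilde\Sigma \to \mathbb{H}^3$ to obtain a $\pi_1(\Sigma)$-invariant function $u\colon \widetilde\Sigma \to \mathbb{R}$, which thus descends to a function on the closed surface $\Sigma$. The key computation is that the composition of a convex (or suitably convex) function on $\mathbb{H}^3$ with a minimal immersion is subharmonic on the induced surface: writing $\Delta_g$ for the Laplace–Beltrami operator of $g$, the standard trace formula for the Laplacian of a composition gives $\Delta_g u = \mathrm{tr}_g(\mathrm{Hess}\, \phi \circ df) + \langle \nabla \phi, \vec{H}\rangle$, and the mean curvature term vanishes by minimality, so $\Delta_g u = \mathrm{tr}_g(\mathrm{Hess}\,\phi \circ df) \ge 0$ whenever $\phi$ has nonnegative Hessian along the tangent planes. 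Busemann functions on $\mathbb{H}^3$ are convex (in fact strictly convex in directions transverse to the radial one), and $r \mapsto \cosh r$ composed with distance-to-a-point, and $\cosh$ of distance-to-a-geodesic, are likewise convex; so in every reducible case we produce a subharmonic function on the closed surface $(\Sigma, g)$. By the maximum principle such a function is constant, and then the equality case of the subharmonicity computation forces $\mathrm{Hess}\,\phi$ to vanish on every tangent plane of the immersion. Since the relevant $\phi$ are strictly convex in enough directions, this pins the image of $f$ into a totally geodesic subspace — a point, a geodesic, or a horosphere — none of which can carry an immersion of a closed genus $\ge 2$ surface group equivariantly (a point and a geodesic are too low-dimensional; a horosphere is flat, contradicting the Gauss equation $K_g = -1 - \tfrac12\|B\|_g^2 < 0$ and also the fact that it is not preserved by a cocompact surface group). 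This yields the contradiction.

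The main obstacle I anticipate is the bookkeeping of the elementary reducible cases and, more delicately, making sure the argument applies to the case of a single fixed point at infinity where the invariant Busemann function is only weakly convex (its Hessian degenerates exactly in the direction of the gradient). There one must argue that the level sets — horospheres — are strictly convex, so that $\Delta_g u \equiv 0$ combined with the constancy of $u$ forces $df(\widetilde\Sigma)$ into a single horosphere, and then invoke that a horosphere is intrinsically flat to contradict the Gauss equation; alternatively one notes $u$ constant means $f(\widetilde\Sigma)$ lies at fixed Busemann height, again a horosphere. A secondary subtlety is the parabolic subcase: if $\rho(\pi_1(\Sigma))$ contains parabolics, one should double-check that the closed surface still forces constancy of $u$ via compactness rather than via any completeness hypothesis on the ambient manifold, but since $\Sigma$ is closed this is automatic. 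Everything else — the Laplacian-of-a-composition identity and the maximum principle — is routine.
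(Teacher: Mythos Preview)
The paper does not supply a proof of this statement; it is quoted from Taubes \cite{TAU04}. So there is nothing in the paper to compare against, and I assess your argument on its own.

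Your overall strategy---pull back a convex function along the minimal (hence harmonic) immersion to get a subharmonic function and invoke compactness of $\Sigma$---is sound and is the natural line of attack. There is, however, a genuine gap in your treatment of the Busemann case. You assert that the Busemann function $b_\xi$ at the common fixed point $\xi\in\partial_\infty(\mathbb{H}^3)$ is $\rho(\pi_1(\Sigma))$-invariant, so that $u=b_\xi\circ f$ descends to $\Sigma$. This is false in general: a loxodromic element $\rho(\gamma)$ fixing $\xi$ shifts $b_\xi$ by its signed translation length, so one only has $b_\xi\circ\rho(\gamma)=b_\xi+c(\gamma)$ for a (typically nonzero) homomorphism $c:\pi_1(\Sigma)\to\mathbb{R}$. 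Hence $u$ need not descend and the maximum principle, as you have stated it, does not apply. The issue you flagged---degeneracy of $\mathrm{Hess}\,b_\xi$ in the radial direction---is real but secondary; invariance is the actual obstacle, and it arises from loxodromics, not parabolics.

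The repair is short. Although $u$ is not $\pi_1(\Sigma)$-invariant, the $1$-form $du$ is (the shifts $c(\gamma)$ are constants), so $\Delta_g u$ descends to a smooth function on the closed surface $\Sigma$. By the divergence theorem $\int_\Sigma \Delta_g u\, dV_g=0$, and together with $\Delta_g u=\mathrm{tr}_g(f^*\mathrm{Hess}\,b_\xi)\geq 0$ this forces $\Delta_g u\equiv 0$. Since $\mathrm{Hess}\,b_\xi\geq 0$ with one-dimensional kernel spanned by $\nabla b_\xi$, vanishing of the trace forces $df(T_p\widetilde\Sigma)\subset\mathrm{span}(\nabla b_\xi)$ at every point, contradicting the fact that $f$ is an immersion of a surface. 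With this correction your argument is complete; note also that under the paper's definition of irreducibility (no fixed point on $\partial_\infty(\mathbb{H}^3)$), the Busemann case is the only one needed.
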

As a result of the discussion in the following section, the above theorem shows that the map $\Phi$ takes values in a smooth manifold.

\section{Limit sets of quasi-Fuchsian groups} \label{sec: limit set}
We begin this section with an overview of discrete subgroups acting isometrically on hyperbolic space.  A Kleinian group is a discrete (torsion-free) subgroup $\Gamma<\mathrm{Isom}^{+}(\mathbb{H}^3)\simeq \mathrm{PSL}(2,\mathbb{C})$ of orientation-preserving isometries of hyperbolic 3-space.  Given a Kleinian group $\Gamma,$ the action on $\mathbb{H}^3$ extends to an action on the conformal boundary $\partial_{\infty} (\mathbb{H}^3)\simeq \mathbb{C}\cup \{\infty\}$ by M\"{o}bius transformations.  This action divides $\partial_{\infty}(\mathbb{H}^3)$ into two disjoint subsets: $\Lambda(\Gamma)$ and $\Omega(\Gamma).$  The \textit{limit set} $\Lambda(\Gamma)$ is defined to be the smallest non-empty, $\Gamma$-invariant closed subset of $\partial_{\infty}(\mathbb{H}^3).$  The \textit{domain of discontinuity} $\partial_{\infty}(\mathbb{H}^3)\backslash \Lambda(\Gamma)=\Omega(\Gamma)$ is the largest open set on which $\Gamma$ acts properly discontinuously.  The quotient $M=\mathbb{H}^3/\Gamma$ is a complete hyperbolic 3-manifold with \textit{holonomy} group $\Gamma.$  

A discrete, faithful representation $\rho:\pi_1(\Sigma)\rightarrow \text{Isom}^+(\mathbb{H}^3)$ is \textit{quasi-Fuchsian} if and only if $\Lambda_{\Gamma}$ is a Jordan curve and $\Omega(\Gamma)$ consists of precisely two invariant, connected, simply-connected components.  The representation $\rho$ is \textit{Fuchsian} is $\Lambda_{\Gamma}$ is a round circle.  The quotient $\Omega(\Gamma)/\Gamma=X^+ \cup \overline{X^-}$ is a disjoint union of two marked Riemann surfaces $(X^+,\overline{X^-}),$ each diffeomorphic to $\Sigma,$ where the bar over $X^-$ denotes the surface with the opposite orientation.  The marking, which is a choice of homotopy equivalence $f^{\pm}:\Sigma\rightarrow X^{\pm},$ is determined by the requirement that $f_{*}^{\pm}=\rho.$  Conversely, we have the Bers' simultaneous uniformization theorem,

\begin{theorem}[ \cite{BER60}]\label{thm: bers}
Given an ordered pair of marked Riemann surfaces $(X^+,\overline{X^-})$ each diffeomorphic to $\Sigma,$  there exists an isomorphism $\rho:\pi_1(\Sigma)\rightarrow \Gamma$ onto a quasi-Fuchsian group $\Gamma,$ unique up to conjugation in $\mathrm{PSL}(2,\mathbb{C}),$ such that $\Omega(\Gamma)/\Gamma=X^+\cup \overline{X^-}.$
\end{theorem}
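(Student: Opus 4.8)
The plan is to deduce Theorem~\ref{thm: bers} from the measurable Riemann mapping theorem of Ahlfors and Bers by a quasiconformal surgery. First I would uniformize the top surface: pick a Fuchsian group $\Gamma_0 < \mathrm{PSL}(2,\mathbb{R})$ with $\mathbb{H}^+/\Gamma_0 \cong X^+$ in the prescribed marking, where $\mathbb{H}^{\pm}$ denote the upper and lower half-planes. Then $\Gamma_0$ also acts on $\mathbb{H}^-$, and $\mathbb{H}^-/\Gamma_0$ is a closed Riemann surface diffeomorphic to $\Sigma$; since any two such surfaces are related by a quasiconformal homeomorphism realizing any prescribed isotopy class, there is a quasiconformal map $w_0 : \mathbb{H}^-/\Gamma_0 \to \overline{X^-}$ in the homotopy class determined by the two markings. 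Lift $w_0$ to a $\Gamma_0$-equivariant quasiconformal map $\widetilde{w}_0 : \mathbb{H}^- \to \mathbb{H}^-$ and let $\mu \in L^{\infty}(\mathbb{H}^-)$, with $\lVert\mu\rVert_{\infty}<1$, be its Beltrami coefficient; equivariance of $w_0$ makes $\mu$ invariant under $\Gamma_0$. Extend $\mu$ by zero across $\mathbb{R}\cup\{\infty\}$ to all of $\widehat{\mathbb{C}}$; the extension is still $\Gamma_0$-invariant because $\Gamma_0$ preserves $\mathbb{H}^+$ and $\mu\equiv 0$ there.

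Next, solve the Beltrami equation: the measurable Riemann mapping theorem gives a quasiconformal homeomorphism $w^{\mu}:\widehat{\mathbb{C}}\to\widehat{\mathbb{C}}$ with Beltrami coefficient $\mu$, unique after normalizing three points. For $\gamma\in\Gamma_0$ the chain rule for Beltrami coefficients together with $\Gamma_0$-invariance of $\mu$ shows that $w^{\mu}\circ\gamma\circ(w^{\mu})^{-1}$ has vanishing Beltrami coefficient, hence is conformal, hence --- being a homeomorphism of $\widehat{\mathbb{C}}$ --- a M\"obius transformation. So $\rho(\gamma):=w^{\mu}\circ\gamma\circ(w^{\mu})^{-1}$ defines an injective homomorphism onto a torsion-free discrete group $\Gamma:=w^{\mu}\Gamma_0(w^{\mu})^{-1}<\mathrm{PSL}(2,\mathbb{C})$, acting properly discontinuously on $w^{\mu}(\mathbb{H}^+)\cup w^{\mu}(\mathbb{H}^-)$ because $\Gamma_0$ does so on $\mathbb{H}^+\cup\mathbb{H}^-$. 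Its limit set is therefore $w^{\mu}(\mathbb{R}\cup\{\infty\})$, the image of a round circle under a global quasiconformal map, that is, a quasicircle, so $\Gamma$ is quasi-Fuchsian. On $\mathbb{H}^+$ the coefficient $\mu$ vanishes, so $w^{\mu}$ is conformal there and $w^{\mu}(\mathbb{H}^+)/\Gamma\cong \mathbb{H}^+/\Gamma_0=X^+$; on $\mathbb{H}^-$ the maps $w^{\mu}$ and $\widetilde{w}_0$ have equal Beltrami coefficients, so they differ by post-composition with a conformal map, giving $w^{\mu}(\mathbb{H}^-)/\Gamma\cong \overline{X^-}$, with markings correct by construction. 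This proves existence.

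For uniqueness, if $\Gamma'$ is a second quasi-Fuchsian group with $\Omega(\Gamma')/\Gamma'=X^+\cup\overline{X^-}$ and the same markings, then the marking-compatible conformal identifications of the quotient surfaces lift to a $(\rho,\rho')$-equivariant homeomorphism $\Omega(\Gamma)\to\Omega(\Gamma')$ that is conformal on each component; it extends continuously across the limit quasicircles to a homeomorphism of $\widehat{\mathbb{C}}$, which is then conformal off a quasicircle, hence --- by conformal removability of quasicircles, equivalently because a quasiconformal self-map of $\widehat{\mathbb{C}}$ with almost-everywhere vanishing Beltrami coefficient is M\"obius --- a M\"obius transformation conjugating $\Gamma$ to $\Gamma'$. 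The one genuinely analytic ingredient, and the main obstacle, is the measurable Riemann mapping theorem itself --- both the existence of $w^{\mu}$ for an arbitrary $L^{\infty}$ Beltrami coefficient of norm $<1$ and the rigidity clause identifying maps with trivial Beltrami coefficient as M\"obius transformations; the remainder is bookkeeping with equivariance, orientations, and proper discontinuity of the action.
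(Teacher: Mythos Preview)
The paper does not supply a proof of Theorem~\ref{thm: bers}; it is quoted as a classical result with a reference to Bers' original paper \cite{BER60}, so there is no in-paper argument to compare against. Your outline is essentially Bers' own proof via the Ahlfors--Bers measurable Riemann mapping theorem, and the steps are correct: uniformize one side by a Fuchsian group, encode the other side as a $\Gamma_0$-invariant Beltrami differential supported on $\mathbb{H}^-$, solve globally, and conjugate. The only place you are slightly brisk is in the uniqueness paragraph, where the continuous extension of the equivariant conformal map across the quasicircle uses that the two markings agree, so that the induced boundary homeomorphisms on $\Lambda_\Gamma$ coincide; once that is said, removability of quasicircles (equivalently, Weyl's lemma applied to a quasiconformal map whose dilatation is supported on a set of measure zero) finishes the job exactly as you indicate.
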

The space of all conjugacy classes of representations of $\pi_1(\Sigma)$ into $\text{Isom}^{+}(\mathbb{H}^3)$ is denoted,
\begin{align}
\mathcal{R}(\pi_1(\Sigma),\text{Isom}^{+}(\mathbb{H}^3)).
\end{align}
For the details concerning the following discussion see \cite{GOL04}.
Via the identification $\text{PSL}_2(\mathbb{C})\simeq \text{Isom}^{+}(\mathbb{H}^3),$ the set of homomorphisms,
\begin{align}
\text{Hom}(\pi_1(\Sigma),\text{Isom}^{+}(\mathbb{H}^3)),
\end{align}
has the structure of an affine algebraic set.  The set of irreducible representations is comprised of those
$\rho\in \text{Hom}(\pi_1(\Sigma),\text{Isom}^{+}(\mathbb{H}^3))$ which do not fix a point in $\partial_{\infty}(\mathbb{H}^3).$  The set of irreducible representations,
\begin{align}
\text{Hom}^{irr}(\pi_1(\Sigma),\text{Isom}^{+}(\mathbb{H}^3)),
\end{align}
is a complex manifold of complex dimension $-3\chi(\Sigma)+3$ upon which the (algebraic) action of $\text{Isom}^{+}(\mathbb{H}^3)$ by conjugation is free and proper.  The quotient,
\begin{align}
 \text{Hom}^{irr}(\pi_1(\Sigma),\text{Isom}^{+}(\mathbb{H}^3))/\text{Isom}^{+}(\mathbb{H}^3)\subset \mathcal{R}(\pi_1(\Sigma),\text{Isom}^{+}(\mathbb{H}^3)),
 \end{align}
 is a complex manifold of complex dimension $-3\chi(\Sigma).$  Quasi-Fuchsian space $\mathcal{QF},$ which consists of conjugacy classes of all quasi-Fuchsian representations, lies in the subspace of irreducible representations as an open subset,
 \begin{align}
 \mathcal{QF}\subset  \text{Hom}^{irr}(\pi_1(\Sigma),\text{Isom}^{+}(\mathbb{H}^3))/\text{Isom}^{+}(\mathbb{H}^3),
 \end{align}
 and thus inherits a complex structure.  With respect to this complex structure, the bijection provided by theorem \ref{thm: bers} becomes a biholomorphism,
\begin{align}
\mathcal{QF}\simeq \mathcal{T}\times \overline{\mathcal{T}}.
\end{align}
The complex structure on $\mathcal{T}$ is the one arising from Kodaira-Spencer deformation theory (see \cite{KOD05}).

If $\rho\in\mathcal{QF}$ is a quasi-Fuchsian representation, the fiber $\Phi^{-1}(\rho)$ of the map $\Phi$ from \eqref{synthmaptorep} consists of $\pi_1(\Sigma)$-injective minimal immersions $\Sigma\rightarrow \rho(\pi_1(\Sigma))\backslash \mathbb{H}^3.$  The general existence theorem \ref{thm: exist minimal} guarantees that this set is always non-empty.

We first show the dynamics of a quasi-Fuchsian representation $\rho$ is at least as complicated as the induced dynamics on an invariant minimal surface in $\Phi^{-1}(\rho).$

\begin{theorem}\label{thm: hdimbound}
Let $\rho\in\mathcal{QF}$ be a quasi-Fuchsian representation and $(g,B)\in \Phi^{-1}(\rho).$  Then,
\begin{align}
\frac{1}{\text{Vol(g)}}\int_{\Sigma}\sqrt{1+\frac{1}{2}\lVert B\rVert_{g}^2}\ dV_{g}\leq \text{\textnormal{H}}.\text{\textnormal{dim}}(\Lambda_{\Gamma}),
\end{align}
with equality if and only if $B$ is identically zero which holds if and only if $\rho$ is Fuchsian.
\end{theorem}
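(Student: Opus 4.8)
The plan is to combine the intrinsic entropy estimate of Theorem \ref{thm: entbound} with the fact that, for a convex-cocompact quotient, the topological entropy of the geodesic flow on an invariant minimal surface is dominated by the critical exponent of the ambient group. First I would observe that given $(g,B) \in \Phi^{-1}(\rho)$, the associated equivariant minimal immersion $f(g,B): \widetilde{\Sigma} \to \mathbb{H}^3$ is, by the Gauss equation \eqref{eqn: gauss} together with $B$ trace-free, an immersion of a surface with sectional curvature $K_g = -1 - \tfrac12 \lVert B \rVert_g^2 \leq -1$, and moreover the immersion is distance non-increasing onto its image in an appropriate sense: since $K_g \le -1$, the intrinsic metric $g$ on $\widetilde{\Sigma}$ dominates the path metric induced from $\mathbb{H}^3$. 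More precisely, the immersion is a contraction after passing to intrinsic distances is false in general, so instead I would argue at the level of orbit-counting: the map $\widetilde{\Sigma} \to \mathbb{H}^3$ is $1$-Lipschitz is also false — the correct statement is that $f$ is distance non-decreasing from $(\widetilde\Sigma, g)$ to $\mathbb{H}^3$? No. The key is the reverse: because the second fundamental form only adds curvature, geodesics in $\widetilde\Sigma$ are at least as efficient, so $d_{\mathbb{H}^3}(f(p), f(q)) \le d_g(p,q)$ — the immersion is $1$-Lipschitz. Hence $N_{\Gamma}^{\mathbb{H}^3}(O, R) \geq N_{\pi_1(\Sigma)}^{g}(p, R)$ for the orbit of $\widetilde p$, which on taking $\tfrac{\log(\cdot)}{R}$ and $R \to \infty$ gives $E(g,B) \le \delta(\Gamma)$.

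Next I would identify $\delta(\Gamma)$ with $\mathrm{H.dim}(\Lambda_\Gamma)$. Since $\rho$ is quasi-Fuchsian, $\Gamma = \rho(\pi_1(\Sigma))$ is convex-cocompact, and $\mathbb{H}^3$ is a proper CAT$(-1)$ Riemannian manifold; Theorem \ref{thm: enthdim} then gives $\delta(\Gamma) = \mathrm{H.dim}(\Lambda_\Gamma)$, where the Hausdorff dimension is computed in a Gromov metric on $\partial_\infty(\mathbb{H}^3)$. (That this agrees with the Hausdorff dimension with respect to the standard round metric on $\partial_\infty \mathbb{H}^3 = \widehat{\mathbb{C}}$, which is the classical statement, follows because the Gromov metric based at any point is bi-Lipschitz — indeed conformally — equivalent to the visual metric on the sphere at infinity.) Combining with the previous step and with Theorem \ref{thm: entbound},
\begin{align}
\frac{1}{\mathrm{Vol}(g)} \int_{\Sigma} \sqrt{1 + \tfrac12 \lVert B \rVert_g^2}\, dV_g \;\le\; E(g,B) \;\le\; \delta(\Gamma) \;=\; \mathrm{H.dim}(\Lambda_\Gamma).
\end{align}

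For the equality case: if equality holds throughout, then in particular the first inequality, which is the content of Theorem \ref{thm: entbound}, is an equality, and that theorem already asserts this forces $B \equiv 0$. Conversely, if $B \equiv 0$ then $(\Sigma, g)$ has constant curvature $-1$, the immersion is totally geodesic, $\Gamma$ stabilizes a totally geodesic $\mathbb{H}^2 \subset \mathbb{H}^3$, hence is Fuchsian, $\Lambda_\Gamma$ is a round circle of Hausdorff dimension $1$, and the left side is also $1$ — so equality holds. Finally, to see that $B \equiv 0$ is equivalent to $\rho$ being Fuchsian: the forward direction is the totally-geodesic argument just given; for the reverse, if $\rho$ is Fuchsian then its limit set is a round circle, $\mathrm{H.dim}(\Lambda_\Gamma) = 1$, and the chain of inequalities forces $\frac{1}{\mathrm{Vol}(g)}\int_\Sigma \sqrt{1 + \tfrac12\lVert B\rVert_g^2}\, dV_g = 1$, which by convexity of $\sqrt{1+x}$ (or simply since the integrand is $\ge 1$ pointwise with equality iff $B = 0$) gives $B \equiv 0$.

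The main obstacle I anticipate is the first step — justifying $E(g,B) \le \delta(\Gamma)$ cleanly. One must be careful that the minimal immersion $f$ is only an immersion, not an embedding, so "distance non-increasing" must be interpreted as: the $\pi_1(\Sigma)$-action on $(\widetilde\Sigma, g)$ and the $\Gamma$-action on $\mathbb{H}^3$ are intertwined by $f$, and $d_{\mathbb{H}^3}(f(x), f(y)) \le d_g(x,y)$ because any $g$-geodesic in $\widetilde\Sigma$ maps to a (generally non-geodesic) path in $\mathbb{H}^3$ of the same length. This is elementary but is where the minimality (or at least the sign of the extrinsic curvature) is really used, via the Gauss equation forcing $K_g \le -1$ so that the comparison geometry of Section \ref{sec: cat} applies to $(\widetilde\Sigma, g)$; one then gets the orbit-counting inequality directly from the $1$-Lipschitz property of $f$ with respect to the two invariant metrics, and passes to the entropy limit. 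I would also need to confirm that $\mathrm{H.dim}(\Lambda_\Gamma)$ in the statement is the classical one and reconcile it with Theorem \ref{thm: enthdim} via the bi-Lipschitz equivalence of Gromov and visual metrics, as noted above.
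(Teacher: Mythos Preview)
Your proposal is correct and follows essentially the same route as the paper: compare orbit counts via the $1$-Lipschitz property of the equivariant immersion to get $E(g,B)\le\delta(\Gamma)=\mathrm{H.dim}(\Lambda_\Gamma)$, then invoke Theorem~\ref{thm: entbound} for the integral lower bound and the equality case. One small clarification: the inequality $d_{\mathbb{H}^3}(f(x),f(y))\le d_g(x,y)$ holds for \emph{any} isometric immersion (a $g$-geodesic maps to a competitor curve of the same length in $\mathbb{H}^3$), so minimality is not used there---it enters only through the Gauss equation in Theorem~\ref{thm: entbound}, which is what produces the specific integrand $\sqrt{1+\tfrac12\lVert B\rVert_g^2}$.
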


\begin{proof}
Given $(g,B)\in \Phi^{-1}(\rho),$ let $\widetilde{\Sigma}\subset \mathbb{H}^3$ be the $\rho(\pi_1(\Sigma))=\Gamma$-invariant minimal disk with induced metric and second fundamental form $(g,B)$ and fix $x\in \widetilde{\Sigma}.$  Given $R>0,$ define,
\begin{align}
\widetilde{N}_{\Gamma}(R):=\{ \gamma\in \Gamma \ | \ d_{g}(x, \gamma(x))\leq R\},
\end{align}
and,
\begin{align}
N_{\Gamma}(R):=\{ \gamma\in \Gamma \ | \ d_{\mathbb{H}^3}(x, \rho(\gamma)(x))\leq R\},
\end{align}
which denote the number of $\Gamma$-orbits within distance $R$ from $x$ with the distance 
computed in the induced metric $g$ and the hyperbolic metric respectively.  Every point at distance $R$ from $x$ in the metric $g$ is distance less than or equal to $R$ in the hyperbolic metric. Thus,
\begin{align}
\widetilde{N}_{\Gamma}(R)\leq N_{\Gamma}(R).
\end{align}

Taking logarithms of each side, dividing by $R,$ and then letting $R\rightarrow \infty,$ the left hand side converges to the entropy $E(g,B).$ Since quasi-Fuchsian groups are convex-cocompact, Theorem \ref{thm: enthdim} implies that the right hand side converges to the Hausdorff dimension of the limit set of $\Gamma$ (in fact, this was proved much earlier by Sullivan \cite{SUL84}).  This proves:
\begin{align}
E(g,B)\leq \text{\textnormal{H}}.\text{\textnormal{dim}}(\Lambda_{\Gamma}).
\end{align}
Applying Theorem \ref{thm: entbound},
\begin{align}
\frac{1}{\text{Vol(g)}}\int_{\Sigma} \sqrt{1+\frac{1}{2}\lVert B\rVert_{g}^2}\ dV_{g}\leq E(g,B)\leq \text{\textnormal{H}}.\text{\textnormal{dim}}(\Lambda_{\Gamma}).
\end{align}
Furthermore, another appeal to Theorem \ref{thm: entbound} shows that the first inequality is an equality if and only if $B=0.$  But, this occurs if and only if the minimal surface $\widetilde{\Sigma}$ is totally geodesic, which exists if and only if $\Gamma$ is Fuchsian.  This completes the proof.
 \end{proof}
As a corollary we obtain a new proof of Bowen's theorem on the Hausdorff dimension of quasi-circles proved in \cite{BOW79}.
\begin{corollary}
A quasi-Fuchsian representation $\rho\in\mathcal{QF}$ is Fuchsian if and only if
$\text{\textnormal{H}}.\text{\textnormal{dim}}(\Lambda_{\Gamma})=1.$
\end{corollary}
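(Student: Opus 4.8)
The plan is to derive this immediately from Theorem \ref{thm: hdimbound}, so the argument will be short. First I would dispatch the easy direction: if $\rho$ is Fuchsian, then up to conjugation $\Gamma<\text{PSL}(2,\mathbb{R})$ and the limit set $\Lambda_{\Gamma}$ is a round circle in $\partial_{\infty}(\mathbb{H}^3)\simeq\mathbb{C}\cup\{\infty\}$, which has Hausdorff dimension $1$; alternatively, one may invoke Theorem \ref{thm: enthdim} together with the fact that the volume entropy of a cocompact Fuchsian group acting on $\mathbb{H}^2\subset\mathbb{H}^3$ is $1$.

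For the converse, suppose $\text{H.dim}(\Lambda_{\Gamma})=1$. By the general existence result (Theorem \ref{thm: exist minimal}) applied to the convex core of $\rho(\pi_1(\Sigma))\backslash\mathbb{H}^3$, the fiber $\Phi^{-1}(\rho)$ is non-empty, so we may pick $(g,B)\in\Phi^{-1}(\rho)$. Theorem \ref{thm: hdimbound} then gives
\begin{align}
\frac{1}{\text{Vol}(g)}\int_{\Sigma}\sqrt{1+\tfrac{1}{2}\lVert B\rVert_{g}^2}\ dV_{g}\leq \text{H.dim}(\Lambda_{\Gamma})=1.
\end{align}
Since the integrand is pointwise at least $1$, the left-hand side is at least $1$, so equality holds throughout. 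The equality clause of Theorem \ref{thm: hdimbound} then forces $B\equiv 0$, i.e.\ the invariant minimal surface is totally geodesic, and hence $\rho$ is Fuchsian.

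There is essentially no obstacle here: the content has all been front-loaded into Theorem \ref{thm: hdimbound} (which in turn rests on Manning's inequality, Katok's rigidity theorem, and the identification of entropy with Hausdorff dimension via Theorem \ref{thm: enthdim}). The only points worth stating carefully are that $\Phi^{-1}(\rho)\neq\varnothing$ so that the inequality is available, and that the "Fuchsian $\Rightarrow$ dimension $1$" direction is the classical one, so the corollary is a genuine two-line consequence of the preceding theorem.
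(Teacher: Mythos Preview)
Your proof is correct and follows essentially the same approach as the paper's own proof: both directions are handled by invoking Theorem \ref{thm: hdimbound}, noting that $\Phi^{-1}(\rho)$ is non-empty by the existence theory, and using the equality clause to force $B\equiv 0$ and hence conclude that $\rho$ is Fuchsian. Your write-up is slightly more explicit (spelling out why the integrand being pointwise $\geq 1$ forces equality), but the argument is identical in substance.
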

\begin{proof}
If $\rho$ is Fuchsian the result is immediate.  Meanwhile, if $\text{\textnormal{H}}.\text{\textnormal{dim}}(\Lambda_{\Gamma})=1,$
then Theorem \ref{thm: hdimbound} forces $B=0$ for any $(g,B)\in\Phi^{-1}(\rho).$  Since there must exist some $(g,B)\in\Phi^{-1}(\rho),$ this implies that $\rho$ leaves invariant a totally geodesic surface; thus $\rho$ is Fuchsian.
\end{proof}

The previous corollary can be quantified using the estimate in Theorem \ref{thm: hdimbound}.  The following result is a quantitative version of Bowen's rigidity for Hausdorff dimension of the limit set.  Using results in \cite{HW11}, \cite{GHW10} we can give bounds on many geometric quantities pertinent in the study of quasi-Fuchsian groups.  First we need a definition.

\begin{definition}
Fix $\varepsilon_0>0.$  Let $\mathcal{QF}_{\varepsilon_0}\subset \mathcal{QF}$ be the set of all quasi-Fuchsian groups such that there exists a $\pi_1$-injective, stable, immersed minimal surface with induced metric $g$ having injectivity radius at least $\varepsilon_0.$
\end{definition}

We remark that every quasi-Fuchsian group $\Gamma$ such that the injectivity radius of the quotient $\mathbb{H}^3/\Gamma$ is greater than $\varepsilon_0$ belongs to the subset $\mathcal{QF}_{\varepsilon_0}.$

The following is the aforementioned quantitative rigidity theorem.

\begin{theorem}\label{quantrigid}
Fix $\varepsilon_0 >0$ and suppose $\Gamma\in \mathcal{QF}_{\varepsilon_0}.$  Then for all $\varepsilon>0$ there exists $\delta=\delta(\varepsilon_0,\varepsilon)$ such that if $\text{H.dim}(\Lambda_{\Gamma})<1+\delta$ and,
\begin{align}
f:\Sigma\rightarrow \mathbb{H}^3/\Gamma,
\end{align}
is any $\pi_1$-injective, stable, immersed minimal surface with induced metric and second fundamental form $(g,B)$ then:
\begin{enumerate}
\item The second fundamental form satisfies $\lVert B\rVert_{g}^2\leq \varepsilon.$  In particular, $\Gamma$ is almost-Fuchsian.
\item $f:\Sigma\rightarrow \mathbb{H}^3/\Gamma$ is the unique closed minimal surface in $\mathbb{H}^3/\Gamma.$  Furthermore, $f$ is an embedding.
\item The diameter and volume of the convex core $\mathcal{C}(\Gamma)$ of $\mathbb{H}^3/\Gamma$ satisfy,
\begin{align}
\text{\textnormal{Diam}}(\mathcal{C}(\Gamma))\leq \frac{1}{2}\log\left(\frac{1+\sqrt{\varepsilon}}{1-\sqrt{\varepsilon}}\right),
\end{align}
and,
\begin{align}
\text{\textnormal{Vol}}(\mathcal{C}(\Gamma))\leq 2\pi\chi(\Sigma)\left(\frac{\sqrt{\varepsilon}}{1-\varepsilon}+\frac{1}{2}\log\left(\frac{1+\sqrt{\varepsilon}}{1-\sqrt{\varepsilon}}\right)\right).
\end{align}
\item The Teichm\"{u}ller distance between the conformal boundary components of $\mathbb{H}^3/\Gamma$ is at most,
\begin{align}
\log\left(\frac{1+\sqrt{\varepsilon}}{1-\sqrt{\varepsilon}}\right).
\end{align}
\item If $h$ is the hyperbolic metric in the conformal class of $g,$ then $\mathbb{H}^3/\Gamma$ is $(1+\varepsilon)$ bi-Lipschitz to the Fuchsian manifold which uniformizes $h.$  
\end{enumerate}
\end{theorem}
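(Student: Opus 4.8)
The proof separates into the pointwise estimate (1), which carries the new analytic content, and the consequences (2)--(5), which follow from (1) via the structure theory of almost-Fuchsian manifolds; I outline (1) in detail and then indicate how (2)--(5) drop out of Theorem~\ref{thm: uhl}. For (1), fix, using $\Gamma\in\mathcal{QF}_{\varepsilon_0}$, a $\pi_1$-injective stable minimal surface with induced data $(g,B)$ and $\mathrm{inj}(g)\ge\varepsilon_0$. The first step is an a priori bound $\sup_\Sigma\lVert B\rVert_g^2\le C_0(\varepsilon_0)$ that does not involve $\mathrm{H.dim}(\Lambda_\Gamma)$: the lift $\widetilde\Sigma\subset\mathbb{H}^3$ is a complete stable minimal surface, the bound on $\mathrm{inj}(g)$ forces the piece of $\widetilde\Sigma$ inside an ambient ball of a definite radius $r(\varepsilon_0)$ to be a disk, and Schoen's curvature estimate for stable minimal surfaces then bounds $\lVert B\rVert_g^2$ in terms of $r(\varepsilon_0)$. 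Second, Theorem~\ref{thm: hdimbound} together with $\mathrm{H.dim}(\Lambda_\Gamma)<1+\delta$ gives
\[\int_\Sigma\Bigl(\sqrt{1+\tfrac12\lVert B\rVert_g^2}-1\Bigr)\,dV_g<\delta\,\mathrm{Vol}(g)\le -2\pi\chi(\Sigma)\,\delta,\]
the last inequality by Gauss--Bonnet applied to the Gauss equation; since $\lVert B\rVert_g^2\le C_0$, concavity of $x\mapsto\sqrt{1+\tfrac12 x}$ converts this into an $L^1$ bound $\int_\Sigma\lVert B\rVert_g^2\,dV_g\le C_1(\varepsilon_0,\chi)\,\delta$.

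To promote this to a sup bound, observe that $\lVert B\rVert_g^2\le C_0$ puts the geometry of $g$ under control (sectional curvature in $[-1-\tfrac12 C_0,-1]$, injectivity radius $\ge\varepsilon_0$), so the standard derivative estimates for $\lVert B\rVert_g^2$ on minimal surfaces coming from Simons' identity yield a uniform gradient bound $\lvert\nabla_g\lVert B\rVert_g^2\rvert\le L(\varepsilon_0)$. At a point $p$ realizing $M:=\sup_\Sigma\lVert B\rVert_g^2$ one then has $\lVert B\rVert_g^2\ge M/2$ on the $g$-ball of radius $\rho:=\min(M/2L,\varepsilon_0/2)$ about $p$, whose area is at least $c\rho^2$, so $C_1\delta\ge\int_\Sigma\lVert B\rVert_g^2\ge\tfrac{c}{2}M\rho^2$; for $\delta$ small this forces $M\le C_2(\varepsilon_0,\chi)\,\delta^{1/3}$. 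Choosing $\delta=\delta(\varepsilon_0,\varepsilon)$ small enough gives $\sup_\Sigma\lVert B\rVert_g^2\le\varepsilon$, and once $\varepsilon<2$ the germ $(g,B)$ is almost-Fuchsian; this is (1).

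Granting (1), the remaining assertions are bookkeeping with the metric $G(g,B)$. Theorem~\ref{thm: uhl} identifies $\mathbb{H}^3/\Gamma$ isometrically with $\bigl(\Sigma\times\mathbb{R},G(g,B)\bigr)$ and states that $\Sigma\times\{0\}$ is its unique closed minimal surface and is embedded; hence every $\pi_1$-injective stable minimal surface in $\mathbb{H}^3/\Gamma$ coincides with it, which is (2) and propagates (1) to all such surfaces. The principal curvatures of the minimal slice are $\pm\sqrt{\tfrac12\lVert B\rVert_g^2}$, bounded by $\mu:=\sqrt{\varepsilon/2}$; the $\tanh$-addition formula for the shape operators of the equidistant surfaces $\Sigma\times\{t\}$ then shows $\Sigma\times\{t\}$ is convex once $|t|\ge\operatorname{arctanh}\sqrt{\tfrac12\sup_\Sigma\lVert B\rVert_g^2}$, so the convex core lies within distance $\operatorname{arctanh}\sqrt\varepsilon=\tfrac12\log\tfrac{1+\sqrt\varepsilon}{1-\sqrt\varepsilon}$ of $\Sigma\times\{0\}$; bounding the volume of the core by the integral of $\det(\cosh t\,\mathbb{I}+\sinh t\,\mathbb{S})$ over the resulting slab and using Gauss--Bonnet gives the stated volume estimate, cf.\ \cite{HW11}, \cite{GHW10}. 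For (4), the conformal classes of the two ends of $G(g,B)$ are those of $g\bigl((\mathbb{I}\pm\mathbb{S})(-),(\mathbb{I}\pm\mathbb{S})(-)\bigr)$, between which the identity has pointwise quasiconformal dilatation at most $\bigl(\tfrac{1+\mu}{1-\mu}\bigr)^2$, so the Teichm\"uller distance between the conformal boundary components is at most $\log\tfrac{1+\sqrt\varepsilon}{1-\sqrt\varepsilon}$. For (5), rewriting the Gauss equation as $\Delta_h u=e^{2u}+\tfrac12 e^{-2u}\lVert B\rVert_h^2-1$ for the conformal factor $g=e^{2u}h$ (with $h$ hyperbolic in the conformal class of $g$) and applying the maximum principle and elliptic estimates gives $\lVert u\rVert_{C^2}\le C\varepsilon$; together with $\lVert\mathbb{S}\rVert\le\mu$ this makes $G(g,B)$ bi-Lipschitz with constant $1+O(\varepsilon)$ to $G(h,0)=dt^2+\cosh^2(t)\,h$, the Fuchsian manifold uniformizing $h$.

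The main obstacle is the a priori bound $\sup_\Sigma\lVert B\rVert_g^2\le C_0(\varepsilon_0)$ of the first step: Schoen's estimate requires the relevant piece of the minimal surface inside an ambient ball to be a disk, and the delicate point is that a lower bound on $\mathrm{inj}(\Sigma,g)$ --- rather than on $\mathrm{inj}(\mathbb{H}^3/\Gamma)$ --- already forces this at a scale depending only on $\varepsilon_0$, together with the comparison of intrinsic and extrinsic scales needed to make the ball-volume step in (1) effective. I would relegate these points to the appendix; everything after (1) is either cited from \cite{HW11}, \cite{GHW10} or a direct computation with the explicit metric $G(g,B)$.
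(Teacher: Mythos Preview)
Your proposal is correct and follows the paper's overall architecture: reduce everything to the pointwise bound (1) via Schoen's estimate plus Theorem~\ref{thm: hdimbound}, then read off (2)--(5) from the almost-Fuchsian structure and the cited estimates in \cite{HW11}, \cite{GHW10}, \cite{UHL83}.

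The one genuine difference is in how you pass from the integral bound to $\sup_\Sigma\lVert B\rVert_g^2\le\varepsilon$. The paper argues by soft compactness: with $\lVert B\rVert_g^2\le C_1$ and $\mathrm{inj}(g)\ge\varepsilon_0$, the conformal factors $u$ in $g=e^{2u}h$ are uniformly bounded, so the underlying hyperbolic metrics $h$ lie in a Mumford-compact set and the associated quadratic differentials in a compact set of sections; one then simply defines $R(\varepsilon,\varepsilon_0)$ as the infimum over this compact family of the radius on which $\lVert B\rVert_g^2$ stays above $\varepsilon$ around any point where it exceeds $2\varepsilon$, and concludes by contradiction. Your route via Simons' identity and an explicit gradient bound $\lvert\nabla_g\lVert B\rVert_g^2\rvert\le L(\varepsilon_0)$ is more hands-on and buys you an explicit rate $\sup\lVert B\rVert_g^2\lesssim\delta^{1/3}$, which the paper's compactness argument does not give; on the other hand the compactness argument sidesteps the bootstrapping needed to justify the uniform $C^1$ control of $B$. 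Both are legitimate, and your flagging of the disk hypothesis in Schoen's estimate as the delicate appendix-level point matches exactly what the paper does.
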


\begin{proof}
Item $(2)$ is an immediate corollary of the fact that $\Gamma$ is almost-Fuchsian.  The estimates in $(3)$ and $(4)$ follow immediately from the estimate on $\lVert B\rVert_{g}$ using the estimates obtained by Huang-Wang and Huang-Guo-Gupta in \cite{HW11}, \cite{GHW10}.  Lastly, item $(5)$ follows from a theorem in \cite{UHL83}.  Hence, the only thing to prove is item $(1).$

An immediate application of a theorem of Schoen \cite{SCH83} implies that there exists a uniform constant $C_1>0,$ independent of $(g,B),$ such that the following $C^0$-estimate holds,
\begin{align}\label{C1Bestimate}
\lVert B\rVert_{C^0}^2\leq C_1.
\end{align}
By Theorem \ref{thm: hdimbound},
\begin{align}
\frac{1}{\text{Vol(g)}}\int_{\Sigma}\sqrt{1+\frac{1}{2}\lVert B\rVert_{g}^2}\ dV_{g}\leq \text{\textnormal{H}}.\text{\textnormal{dim}}(\Lambda_{\Gamma}).
\end{align}
Now let $\varepsilon>0,$ we shall find $\delta>0$ to prove the desired estimate in item $(1).$

Consider the subset of $\Sigma,$
\begin{align}
F_{\varepsilon}=\left\{p\in\Sigma \ | \ {\varepsilon}< \lVert B \rVert_{g}^2 \right\}.
\end{align}
Let $E_{\varepsilon}=\Sigma \backslash F_{\varepsilon}.$ Using \eqref{C1Bestimate}, the Gauss equation,
\begin{align}\label{geq}
K_{g}=-1-\frac{1}{2}\lVert B \rVert_{g}^2,
\end{align}
implies that there exists a uniform constant $C_2$ such that $\lvert K_{g}\rvert \leq C_2.$  In the appendix, we show that \eqref{C1Bestimate} implies that if there exists $p\in F_{2\varepsilon},$ then there exists an $R=R(\varepsilon, \varepsilon_0)>0,$ which depends only on $\varepsilon$ and $\varepsilon_0,$ such that $B_{g}(p,R)\subset F_{\varepsilon}.$  Next, bounded sectional curvature in addition to the assumption that the injectivity radius of $(\Sigma, g)$ is at least $\varepsilon_0$ implies local non-collapsing of volume.  Consequently,  there exists a uniform constant $\kappa=\kappa(\varepsilon,\varepsilon_0)>0$ such that whenever $F_{2\varepsilon}$ is non-empty,
\begin{align}
\text{Vol}(F_{\varepsilon})>\text{Vol}(B_{g}(p,R))>\kappa.
\end{align}
Hence,
\begin{align}
 \text{\textnormal{H}}.&\text{\textnormal{dim}}(\Lambda_{\Gamma})\geq\frac{1}{\text{Vol(g)}}\int_{\Sigma}\sqrt{1+\frac{1}{2}\lVert B\rVert_{g}^2}\ dV_{g} \\
&=\frac{1}{\text{Vol(g)}}\left(\int_{F_{\varepsilon}}\sqrt{1+\frac{1}{2}\lVert B\rVert_{g}^2}\ dV_{g}+\int_{E_{\varepsilon}}\sqrt{1+\frac{1}{2}\lVert B\rVert_{g}^2}\ dV_{g}\right) \\
&>\frac{1}{\text{Vol(g)}}\left(\text{Vol}(F_{\varepsilon})\sqrt{1+\frac{1}{2}\varepsilon}+\text{Vol}(E_{\varepsilon})\right) \\
&>\frac{1}{\text{Vol(g)}}\left(\text{Vol}(F_{\varepsilon})\left(1+\frac{\sqrt{\varepsilon}}{2\sqrt{6}}\right)+\text{Vol}(E_{\varepsilon})\right) \\
&>\frac{1}{\text{Vol(g)}}\left(\text{Vol(g)}+\frac{\kappa \sqrt{\varepsilon}}{2\sqrt{6}}\right)
\end{align}
Hence, we obtain the estimate,
\begin{align}\label{quantbound}
 \text{\textnormal{H}}.\text{\textnormal{dim}}(\Lambda_{\Gamma})>1+\frac{\kappa \sqrt{\varepsilon}}{2\sqrt{6}\text{Vol(g)}}.
\end{align}
Observe that the local non-collapsing of volume implies a uniform lower bound for $\text{Vol}(g).$
Thus, let $\delta$ be defined as,
\begin{align}
\delta=\frac{\kappa \sqrt{\varepsilon}}{2\sqrt{6}\text{Vol(g)}},
\end{align}
and assume the limit set satisfies,
\begin{align}
 \text{\textnormal{H}}.\text{\textnormal{dim}}(\Lambda_{\Gamma})<1+\delta.
\end{align}
Then, the estimate \eqref{quantbound} implies,
\begin{align}
1+\delta<  \text{\textnormal{H}}.\text{\textnormal{dim}}(\Lambda_{\Gamma})<1+\delta,
\end{align}
which is a contradiction.  Hence, we conclude that $F_{2\varepsilon}$ is empty and $\lVert B\rVert_{g}^2\leq 2\varepsilon.$
Replacing $\varepsilon$ with $\frac{\varepsilon}{2}$ completes the proof.
\end{proof}

\textbf{Remark:}  The assumption that the injectivity radius of $(\Sigma, g)$ is bounded below seems essential for the argument since it seems very possible to pack a lot of the mass of $\lVert B\rVert_{g}$ into a region of extremely small volume but controlled diameter.  Nonetheless, the veracity of the purely quasi-Fuchsian group statement that small Hausdorff dimension implies almost-Fuchsian is unclear from our approach.

\textbf{Remark:} This result can also be interpreted as a uniqueness result for the solutions to the boundary value problem for a minimal disk in $\mathbb{H}^3$ asymptotic to the very, fractal boundary curve which is the limit set of $\Gamma.$  Results of this type in minimal surface theory often rest on the hypothesis that the boundary curve is rather smooth but has controlled total geodesic curvature.  Here, the Hausdorff dimension being near $1$ seems to be playing the role of a geodesic curvature bound.

A final corollary is the following $L^1$-bound on the norm of the second fundamental form of a $\pi_1$-injective minimal surface in a quasi-Fuchsian manifold.
\begin{corollary}
If $\rho\in\mathcal{QF}$ is a quasi-Fuchsian representation and $(g,B)\in\Phi^{-1}(\rho),$ then
\begin{align}
\frac{1}{\text{\textnormal{Vol}(g)}}\int_{\Sigma}\sqrt{1+\frac{1}{2}\lVert B\rVert_{g}^2} \ dV_{g}<2.
\end{align}
\end{corollary}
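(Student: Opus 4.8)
The plan is to bound the left-hand side from above by the Hausdorff dimension of the limit set and then invoke the elementary fact that a quasi-circle limit set in $\partial_\infty(\mathbb{H}^3)\simeq S^2$ has Hausdorff dimension strictly less than $2$. The first ingredient is already in hand: Theorem \ref{thm: hdimbound} gives
\[
\frac{1}{\text{Vol}(g)}\int_{\Sigma}\sqrt{1+\tfrac{1}{2}\lVert B\rVert_{g}^2}\ dV_{g}\leq \text{H.dim}(\Lambda_{\Gamma}),
\]
for any $(g,B)\in\Phi^{-1}(\rho)$. So the corollary reduces entirely to the strict inequality $\text{H.dim}(\Lambda_{\Gamma})<2$ for a quasi-Fuchsian group $\Gamma$.

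Next I would justify that strict bound. Since $\rho$ is quasi-Fuchsian, $\Lambda_{\Gamma}$ is a Jordan curve in $S^2=\partial_\infty(\mathbb{H}^3)$, and its complement $\Omega(\Gamma)$ consists of two nonempty open sets on which $\Gamma$ acts properly discontinuously and cocompactly (the quotient is the pair of compact Riemann surfaces $X^+\cup\overline{X^-}$). Because $\Omega(\Gamma)$ is nonempty and open, $\Lambda_{\Gamma}$ is a proper closed subset of $S^2$, hence cannot have full $2$-dimensional Lebesgue measure — in fact a cleaner route is Sullivan's theorem (via Theorem \ref{thm: enthdim}, $\text{H.dim}(\Lambda_{\Gamma})=\delta(\Gamma)$) together with the Ahlfors–Sullivan fact that a geometrically finite Kleinian group with nonempty domain of discontinuity has limit set of dimension strictly below $2$; for a cocompact-on-$\Omega$ (hence geometrically finite without cusps) quasi-Fuchsian $\Gamma$ this is classical. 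Alternatively, one can quote directly that $\Gamma$ acts convex-cocompactly on $\mathbb{H}^3$ with nonempty domain of discontinuity, so $\delta(\Gamma)<2$ by the strict inequality in the Bishop–Jones / Sullivan dichotomy. I will phrase the argument using whichever of these is cited earlier in the paper; given the setup, appealing to convex-cocompactness plus $\Omega(\Gamma)\neq\emptyset$ is the most self-contained.

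Combining the two steps, for any $(g,B)\in\Phi^{-1}(\rho)$,
\[
\frac{1}{\text{Vol}(g)}\int_{\Sigma}\sqrt{1+\tfrac{1}{2}\lVert B\rVert_{g}^2}\ dV_{g}\leq \text{H.dim}(\Lambda_{\Gamma})<2,
\]
which is exactly the assertion. The main obstacle — really the only nontrivial point — is supplying a clean citation or short argument for $\text{H.dim}(\Lambda_{\Gamma})<2$; everything else is immediate from Theorem \ref{thm: hdimbound}. If one wanted to avoid external input entirely, a fallback would be to run the argument in the proof of Theorem \ref{quantrigid} in reverse: $\lVert B\rVert_g^2$ is not identically zero (else $\rho$ is Fuchsian and the limit set is a round circle of dimension $1<2$), and Sullivan's strict-monotonicity of $\delta$ under nontrivial quasiconformal deformation gives $\delta(\Gamma)<2$; but citing the standard Kleinian-groups fact is cleaner and is consistent with the paper's style of invoking Theorem \ref{thm: enthdim} and Sullivan's work throughout.
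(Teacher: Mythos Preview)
Your proposal is correct and follows exactly the paper's approach: apply Theorem \ref{thm: hdimbound} and then use the fact that $\text{H.dim}(\Lambda_{\Gamma})<2$ for a quasi-Fuchsian group. The paper's proof is a single sentence invoking this fact without further justification, so your additional discussion of why the strict inequality holds is more detailed than what the paper provides, but the logic is identical.
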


\begin{proof}
This follows from Theorem \ref{thm: hdimbound} since $\text{\textnormal{H}}.\text{\textnormal{dim}}(\Lambda_{\Gamma})<2.$
\end{proof}
This is a necessary condition for a minimal hyperbolic germ to arise as a closed minimal surface in a quasi-Fuchsian manifold; it is unknown to what extent this condition is sufficient.

\section{Comparing actions on $\mathcal{QF}$ and $\mathcal{H}.$} \label{sec: actions}
The space $\mathcal{QF}$ of quasi-Fuchsian representations possesses an anti-holomorphic involution $\iota$ which acts by,
\begin{align}
\iota(X,\overline{Y})=(Y,\overline{X})
\end{align}
where $(X,\overline{Y})\in\mathcal{T}\times\overline{\mathcal{T}}\simeq \mathcal{QF}.$  Meanwhile, the space of minimal germs $\mathcal{H}$ also carries an involution given by the restriction of the $\text{U}(1)$-action to multiplication by $-1$ sending a germ $(g,B)$ to $(g,-B).$  The following Theorem shows that these actions are actually intertwined, at least on the almost-Fuchsian germs.
\begin{theorem}
For all $(g,B)\in\mathcal{AF},$
\begin{align}
\Phi((g,-B))=\iota\circ\Phi((g,B)).
\end{align}
where $\Phi$ is the map defined in \ref{synthmaptorep}
\end{theorem}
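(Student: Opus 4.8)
The plan is to unwind the construction of the map $\Phi$ and show that replacing $B$ by $-B$ corresponds precisely to switching the two components of the domain of discontinuity. The starting observation is that the germ $(g,-B)$ is integrated, via Theorem \ref{thm: fundamental}, by the \emph{same} minimal disk $\widetilde{\Sigma}\subset\mathbb{H}^3$ used for $(g,B)$, but with the \emph{reversed} choice of unit normal $N\mapsto -N$; since the second fundamental form is defined by $B_{ij}=\langle\nabla_{df(\partial_i)}N,df(\partial_j)\rangle$, flipping $N$ flips $B$ and leaves $g$ unchanged. Concretely, if $f:\widetilde{\Sigma}\to\mathbb{H}^3$ is the immersion with data $(g,B)$ adapted to the frame $\{E_1,E_2,N\}$ at $O$, then $f$ itself (viewed with frame $\{E_1,E_2,-N\}$, or equivalently $\{E_2,E_1,N\}$ after an orientation bookkeeping) is an immersion realizing $(g,-B)$. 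In either case the \emph{geometric image} in $\mathbb{H}^3$ is the same surface, and so the equivariance relation $f\circ\gamma=\rho(\gamma)\circ f$ forces the holonomy representations $\Phi((g,B))$ and $\Phi((g,-B))$ to be conjugate \emph{in} $\mathrm{Isom}(\mathbb{H}^3)$ — but possibly by an orientation-reversing isometry, which is exactly the source of the involution $\iota$.

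The key steps, in order: First, I would invoke Theorem \ref{thm: uhl}: for $(g,B)\in\mathcal{AF}$ the associated equidistant foliation $G(g,B)=dt^2+g(\cosh t\,\mathbb{I}+\sinh t\,\mathbb{S},\,\cdot\,)$ is a complete quasi-Fuchsian hyperbolic metric on $\Sigma\times\mathbb{R}$, with $\Sigma\times\{0\}$ the unique closed minimal surface. Second, I would observe that the shape operator $\mathbb{S}$ associated to $-B$ is $-\mathbb{S}$, and that the reparametrization $t\mapsto -t$ carries $G(g,B)$ to $G(g,-B)$: indeed $\cosh(-t)\mathbb{I}+\sinh(-t)(-\mathbb{S})=\cosh t\,\mathbb{I}+\sinh t\,\mathbb{S}$, so the two metrics on $\Sigma\times\mathbb{R}$ are \emph{isometric} via the map $(x,t)\mapsto(x,-t)$. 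Third, this isometry is orientation-reversing on the $\mathbb{R}$-factor, hence on $\Sigma\times\mathbb{R}$ it swaps the two ends $t\to+\infty$ and $t\to-\infty$; passing to the conformal boundary at infinity of the quasi-Fuchsian manifold $\Gamma\backslash\mathbb{H}^3$ (where $\Gamma=\Phi((g,B))(\pi_1\Sigma)$), it interchanges the two components of $\Omega(\Gamma)/\Gamma$, i.e. it exchanges $X^+$ with $\overline{X^-}$. Fourth, translating this back through the Bers parametrization $\mathcal{QF}\simeq\mathcal{T}\times\overline{\mathcal{T}}$ and Theorem \ref{thm: bers}, the effect on the conjugacy class of the holonomy is precisely the action of $\iota(X,\overline Y)=(Y,\overline X)$. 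Since the minimal surface data of $G(g,-B)$ at the slice $\{t=0\}$ is $(g,-B)$ by construction, its holonomy is $\Phi((g,-B))$, and we conclude $\Phi((g,-B))=\iota\circ\Phi((g,B))$.

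The main obstacle I anticipate is the careful orientation/marking bookkeeping in steps three and four: one must check that the map $(x,t)\mapsto(x,-t)$ descends to the quotient compatibly with the $\pi_1(\Sigma)$-action in a way that realizes $\iota$ on the level of \emph{marked} Riemann surfaces (not merely up to an unmarked biholomorphism), and that the orientation conventions on $\Sigma$, on the normal bundle, and on $\overline{\mathcal{T}}$ all line up so that no spurious extra involution (e.g. complex conjugation on $\mathcal T$ alone) creeps in. Here one should use the defining property of the marking, $f_*^\pm=\rho$, together with the fact that reversing the normal direction and reversing the transverse parameter $t$ is a single coherent operation, so the homotopy class of the identification of each boundary surface with $\Sigma$ is transported correctly. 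The almost-Fuchsian hypothesis is essential precisely because it guarantees (via Theorem \ref{thm: uhl}) the \emph{global} tubular-neighborhood picture $\Sigma\times\mathbb{R}$ in which this reflection argument makes sense; outside $\mathcal{AF}$ the equidistant surfaces develop singularities and the argument — like the statement — breaks down. A routine check that $G(g,-B)$ as written is literally the pullback of $G(g,B)$ under $(x,t)\mapsto(x,-t)$, which I would state but not belabor, completes the verification.
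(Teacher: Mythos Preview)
Your proof is correct and follows the same overall strategy as the paper: use the explicit Uhlenbeck description of the almost-Fuchsian metric $G(g,B)$ on $\Sigma\times\mathbb{R}$ from Theorem~\ref{thm: uhl} to see that the passage $B\mapsto -B$ swaps the two conformal boundary components, hence realizes $\iota$ via Bers' parametrization.

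The one genuine difference is in \emph{how} you identify the swap. The paper invokes Fock's complex-analytic rewriting of the Uhlenbeck metric,
\[
G(g,B)=dt^2+e^{2u}\bigl|\cosh(t)\,dz+\sinh(t)\,e^{-2u}\overline{\alpha}\,d\overline{z}\bigr|^2,
\]
from which one reads off directly that the conformal structures at $t\to\pm\infty$ are $|dz\pm\mu\,d\overline{z}|^2$ with Beltrami differential $\mu=e^{-2u}\overline{\alpha}$; since $B\mapsto -B$ sends $\alpha\mapsto -\alpha$ and hence $\mu\mapsto-\mu$, the two boundary structures are visibly interchanged. Your argument instead observes the isometry $(x,t)\mapsto(x,-t)$ between $G(g,B)$ and $G(g,-B)$ and infers the swap of ends geometrically. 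Your route is cleaner in that it avoids introducing the Beltrami differential and the Fock form altogether; the paper's route is more explicit about \emph{which} conformal structures sit at the ends, and so sidesteps the marking/orientation bookkeeping you (rightly) flag as the delicate point. Either way the almost-Fuchsian hypothesis is used in exactly the same place, to guarantee the global product structure on which the argument lives.
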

\begin{proof}
Theorem \ref{thm: uhl} explicitly expresses the almost-Fuchsian metric on $\Sigma\times \mathbb{R}$ corresponding to the germ $(g,B)\in\mathcal{AF}.$  Fock \cite{FOC07} discovered a complex analytic way to express this metric.  Namely, write $g=e^{2u}\lvert dz\rvert^2$ in conformal coordinates and let $\alpha=\Psi((g,B))$ be the quadratic differential whose real part is $B.$  Then,
\begin{align}
G((g,B))=dt^2+e^{2u}\lvert \cosh(t)dz+\sinh(t)e^{-2u}\overline{\alpha}d\overline{z}\rvert^2
\end{align}
 expresses the almost-Fuchsian metric (expanding out this expression, it is equal to the one appearing in Theorem \ref{thm: uhl}).   The Beltrami differential,
\begin{align}
\mu=e^{-2u}\overline{\alpha},
\end{align}
has the property that the metrics,
\begin{align}
\lvert dz\pm \mu d\overline{z}\rvert^2,
\end{align}
furnish conformal metrics on the two components of the domain of discontinuity for the almost-Fuchsian group corresponding to $(g,B).$  Now, sending $(g,B)$ to $(g,-B)$ sends $\alpha$ to $-\alpha$
which changes $\mu$ to $-\mu.$  Hence, the mapping $\iota$ interchanges the conformal structures on the domain of discontinuity.  This completes the proof.
\end{proof}

If the mapping $\Phi$ were an immersion everywhere, then an analytic continuation argument would show that these actions intertwine on the whole quasi-Fuchsian space.  However, as we now explain this is likely not the case and is related to the bifurcation problem in minimal surface theory.

A minimal hyperbolic germ is non-degenerate if the second variation of area has zero nullity.  More precisely, given the minimal immersion $f:\widetilde{\Sigma}\rightarrow \mathbb{H}^3$ corresponding to a minimal hyperbolic germ $(g,B)\in\mathcal{H}$, let $\nu$ be a unit normal vector field to the image of $f$ and let $u\in C^{\infty}(\Sigma).$  Then lifting $u$ periodically to the universal cover $\widetilde{\Sigma},$ take a normal variation $f_t$ of $f$ such that,
\begin{align}
\frac{d}{dt}f_t\vert_{t=0}=u\nu.
\end{align}
Then the well known formula for the second variation of area (see \cite{CM11}) gives,
\begin{align}
\frac{d^2}{dt^2}Area(f_t^{*}G_{\mathbb{H}^3})\vert_{t=0}=\int_{\Sigma}-u\Delta_{g}u-(\lVert B\rVert_g^2-2)u^2 dV_g.
\end{align}
Here $G_{\mathbb{H}^3}$ is the Riemannain metric on $\mathbb{H}^3.$  A minimal hyperbolic germ is non-degenerate if and only if the \textit{Jacobi operator},
\begin{align}
L_{(g,B)}=-\Delta_{g}-(\lVert B\rVert_g^2 -2),
\end{align}
has no non-zero eigenfunctions with eigenvalue 0, that is there are no non-zero solutions to $L_{(g,B)}u=0.$  Solutions to $L_{(g,B)}u=0$ are called \textit{Jacobi fields.}

The following theorem of Taubes shows the special significance of degenerate minimal germs.

\begin{theorem}[\cite{TAU04}] \label{thm: jacobi}
The vector space of Jacobi fields for the operator $L_{(g,B)}$ is in bijection with the kernel of the differential of the map,
\[\Phi:\mathcal{H}\rightarrow \mathcal{R}(\pi_1(\Sigma),\text{PSL}(2,\mathbb{C})),\]
at the germ $(g,B).$
\end{theorem}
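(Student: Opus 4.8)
The plan is to set up the linearization of $\Phi$ at $(g,B)$ carefully and identify its kernel with the Jacobi fields by a dimension-count plus an explicit construction. First I would recall that, by Theorem \ref{thm: fundamental}, once a base point $O\in\mathbb{H}^3$ and a frame at $O$ are fixed, a minimal hyperbolic germ $(g,B)$ integrates to a \emph{unique} equivariant minimal immersion $f=f(g,B):\widetilde{\Sigma}\to\mathbb{H}^3$ with holonomy $\rho=\Phi(g,B)$; thus $\Phi$ factors as ``integrate the germ, then read off the holonomy.'' Differentiating the integration step, a tangent vector to $\mathcal{H}$ at $(g,B)$ — a pair $(\dot g,\dot B)$ solving the linearized Gauss–Codazzi system modulo the linearized $\mathrm{Diff}_0$-action — produces an equivariant normal vector field $V=u\nu$ along $f(\widetilde\Sigma)$ (the non-normal part can be absorbed into a reparametrization, hence killed in $\mathcal{H}$), and the requirement that the deformed surface remain minimal is exactly the linearized mean-curvature equation $L_{(g,B)}u=0$, i.e. $u$ is a Jacobi field. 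So every tangent vector to $\mathcal{H}$ at $(g,B)$ determines a periodic function $u$ on $\Sigma$, minimal-ness of the family forces $L_{(g,B)}u=0$, and the deformation is \emph{holonomy-trivial} — in $\ker d\Phi$ — precisely when the infinitesimal motion of the surface can be realized by an ambient Killing field, equivalently when $u$ arises from the zero deformation of the frame data, i.e. $u$ is the normal component of an ambient isometric motion restricted to the surface.

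Next I would make this correspondence a genuine bijection in both directions. Given a Jacobi field $u$ (a solution of $L_{(g,B)}u=0$ on $\Sigma$, lifted periodically), the second-variation formula quoted above shows the associated normal variation $f_t$ preserves area to second order; the Gauss and Codazzi equations linearize to a solvable first-order system for $(\dot g,\dot B)$ driven by $u$ (this is classical surface theory — the variation of the first and second fundamental forms of a normal deformation is algebraic–differential in $u$ and the existing data), so $u$ integrates to a curve $(g_t,B_t)\in\mathcal{H}$ with $\tfrac{d}{dt}\big|_0(g_t,B_t)$ a well-defined tangent vector. The holonomy $\rho_t=\Phi(g_t,B_t)$ then satisfies $\tfrac{d}{dt}\big|_0\rho_t=0$: indeed the deformed immersions $f_t$ all share the normalization $f_t(\tilde p)=O$, $df_t(\tilde p)(E_i)=F_i$ to first order, so the holonomy of $f_t$ agrees with that of $f$ to first order, giving $u\mapsto[\,\dot{(g,B)}\,]\in\ker d\Phi$. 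Conversely, a tangent vector in $\ker d\Phi$ is represented by a family $(g_t,B_t)$ whose immersions $f_t$ have holonomy constant to first order; subtracting the corresponding family of ambient isometries, one may assume the frame data is fixed, so the first-order normal displacement $u$ is a well-defined function on $\Sigma$, and minimality of each $f_t$ forces $L_{(g,B)}u=0$. These two assignments are inverse to each other and linear, giving the claimed vector-space isomorphism.

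The main obstacle is the bookkeeping of gauge: one must be scrupulous that quotienting by $\mathrm{Diff}_0(\Sigma)$ on the domain side and by conjugation (ambient isometries) on the target side is handled consistently, so that ``the normal component $u$ is well-defined'' really holds and no spurious tangential reparametrizations sneak in or out. Concretely, the subtlety is showing that the map from Jacobi fields to $\ker d\Phi$ is \emph{injective} — i.e. a nonzero Jacobi field cannot integrate to a curve in $\mathcal{H}$ that is constant modulo $\mathrm{Diff}_0$ — and that it is \emph{surjective} — every holonomy-trivial infinitesimal deformation is, after an isometry correction, purely normal with $L_{(g,B)}u=0$ and $u\neq 0$ unless the deformation itself is trivial in $\mathcal{H}$. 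Both points reduce to the uniqueness clause in Theorem \ref{thm: fundamental}: the immersion is rigidly determined by $(g,B)$ and the frame, so the only ambiguity in the integration is the ambient isometry group, which is exactly the freedom quotiented out in $\mathcal{R}(\pi_1(\Sigma),\mathrm{PSL}(2,\mathbb{C}))$. Once this is pinned down the result is immediate; I would attribute the underlying rigidity to Theorem \ref{thm: fundamental} and the second-variation computation to the formula already displayed, and cite \cite{TAU04} for the original argument.
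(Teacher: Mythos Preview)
The paper does not supply its own proof of this statement; it is quoted from Taubes \cite{TAU04} and used as a black box, so there is no in-paper argument to compare against.

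That said, your sketch contains a genuine conceptual error in the first paragraph that undermines the rest.  You assert that \emph{every} tangent vector to $\mathcal{H}$ at $(g,B)$ yields an equivariant normal field $u\nu$ with $u$ periodic on $\Sigma$ and $L_{(g,B)}u=0$, and that membership in $\ker d\Phi$ is then characterized by $u$ coming from an ambient Killing field.  This cannot be right on dimensional grounds alone: $\dim\mathcal{H}=12g-12$, while $\ker L_{(g,B)}$ is generically zero.  The mistake is in the claim that the variation field $\dot f$ is $\rho$-equivariant.  Differentiating the equivariance relation $f_t(\gamma x)=\rho_t(\gamma)f_t(x)$ at $t=0$ gives
\[
\dot f(\gamma x)=d\rho(\gamma)\cdot\dot f(x)+\dot\rho(\gamma)\cdot f(x),
\]
so $\dot f$ descends to the closed surface \emph{only when} $\dot\rho=0$, i.e.\ only for tangent vectors already in $\ker d\Phi$.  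For a general tangent vector to $\mathcal{H}$ the normal part of $\dot f$ is not periodic, and while it solves the Jacobi equation on $\widetilde\Sigma$, it is not a Jacobi field on $\Sigma$ in the sense of the theorem.  Your characterization of $\ker d\Phi$ as ``$u$ comes from a Killing field'' therefore says $\ker d\Phi=0$ always (since $\rho$ is irreducible there are no periodic Killing fields), which is the negation of what you are trying to prove.

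Your second paragraph is closer to the correct picture: a tangent vector in $\ker d\Phi$ has $\dot\rho=0$, hence (after adjusting by an infinitesimal isometry) the variation field is $\rho$-equivariant, its normal part $u$ is periodic, and minimality forces $L_{(g,B)}u=0$; conversely a periodic Jacobi field $u$ generates a $\rho$-equivariant normal deformation through minimal immersions, hence a curve in $\mathcal{H}$ with constant holonomy to first order.  If you rewrite the argument starting from this paragraph and discard the inverted logic of the first, you will have a correct outline, though the injectivity step (a nonzero periodic Jacobi field gives a nonzero tangent vector in $\mathcal{H}$) still needs the observation that a purely normal variation with $u\not\equiv 0$ genuinely changes $(g,B)$ modulo $\mathrm{Diff}_0$.
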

An argument using the continuity method in \cite{UHL83} shows that there do exist minimal germs for which the associated Jacobi operator admits nontrivial solutions.
Using the above theorem, an application of Sard's theorem shows that in terms of representations, this phenomena is non-generic.
\begin{theorem}
The set of conjugacy classes of representations $\rho:\pi_1(\Sigma)\rightarrow \text{Isom}^{+}(\mathbb{H}^3)$ which admit a degenerate, equivariant minimal immersion $f:\widetilde{\Sigma}\rightarrow \mathbb{H}^3$ has measure zero in the character variety.
\end{theorem}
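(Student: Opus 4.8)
The plan is to realize the set of "bad" representations as the image of the critical set of the map $\Phi$ and then apply Sard's theorem. First I would recall that, by the theorem of Taubes cited just above (Theorem \ref{thm: jacobi}), a minimal hyperbolic germ $(g,B)\in\mathcal{H}$ is degenerate — i.e. its Jacobi operator $L_{(g,B)}$ has a nontrivial kernel — if and only if $d\Phi$ fails to be injective at $(g,B)$. Since $\mathcal{H}$ is a smooth manifold of dimension $12g-12$ and (by the discussion in $\S$\ref{sec: hyp manifolds} and $\S$\ref{sec: limit set}, using Taubes' irreducibility result Theorem \ref{thm: smoothmap}) the map $\Phi$ takes values in the smooth manifold $\text{Hom}^{irr}(\pi_1(\Sigma),\text{Isom}^{+}(\mathbb{H}^3))/\text{Isom}^{+}(\mathbb{H}^3)$, which has real dimension $2(-3\chi(\Sigma))=12g-12$ as well, the two manifolds have equal dimension. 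Hence $d\Phi_{(g,B)}$ is injective if and only if it is an isomorphism, and the degenerate germs are \emph{exactly} the critical points of $\Phi$ in the usual sense.

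Next I would invoke Sard's theorem: the set of critical values of the smooth map $\Phi:\mathcal{H}\to \mathcal{R}(\pi_1(\Sigma),\text{Isom}^{+}(\mathbb{H}^3))$ has measure zero in the (smooth part of the) character variety. A representation $\rho$ admits a degenerate equivariant minimal immersion precisely when some $(g,B)\in\Phi^{-1}(\rho)$ is a critical point of $\Phi$, i.e. precisely when $\rho$ is a critical value. Therefore the set of such $\rho$ is contained in the critical value set, which is null, and the theorem follows. One should note that "measure zero in the character variety" is meant with respect to the smooth manifold structure on $\text{Hom}^{irr}/\text{Isom}^{+}(\mathbb{H}^3)$; representations with degenerate minimal immersions are automatically irreducible (they lie in the image of $\Phi$, which by Theorem \ref{thm: smoothmap} consists of irreducible representations), so there is no issue about the singular locus of the character variety.

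The main technical point — and the only place any real care is needed — is the dimension count and the verification that $\Phi$ is genuinely a smooth map between finite-dimensional smooth manifolds of the \emph{same} dimension, so that "non-injective differential" coincides with "critical point" in the sense required by Sard. The equality $\dim \mathcal{H}=12g-12=\dim_{\mathbb{R}}\big(\text{Hom}^{irr}(\pi_1(\Sigma),\text{Isom}^{+}(\mathbb{H}^3))/\text{Isom}^{+}(\mathbb{H}^3)\big)$ has already been established in the excerpt (Taubes' dimension theorem for $\mathcal{H}$, and the complex-dimension count $-3\chi(\Sigma)$ for the character variety), so this is a matter of assembling facts rather than proving anything new. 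A secondary, essentially cosmetic, point is that $\mathcal{H}$ is second countable, so Sard applies in the form giving a measure-zero (indeed meager) critical value set; this is immediate since $\mathcal{H}$ is a finite-dimensional manifold. With these in hand the argument is a one-line application of Sard's theorem combined with Taubes' identification of $\ker d\Phi$ with the space of Jacobi fields.
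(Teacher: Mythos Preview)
Your proposal is correct and follows essentially the same route as the paper: identify degenerate germs with critical points of $\Phi$ via Taubes' Theorem~\ref{thm: jacobi}, note that $\Phi$ lands in the smooth manifold of irreducible representations by Theorem~\ref{thm: smoothmap}, and apply Sard's theorem. You make explicit the dimension count $\dim\mathcal{H}=12g-12=\dim_{\mathbb{R}}\mathcal{R}^{irr}$, which is needed to pass from ``$d\Phi$ has nontrivial kernel'' to ``$d\Phi$ is not surjective'' (the hypothesis of Sard); the paper leaves this implicit.
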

\begin{proof}
By Theorem \ref{thm: jacobi}, a representation is a critical value of the map $\Phi$ if and only if it admits a degenerate, equivariant minimal immersion.   By Theorem \ref{thm: smoothmap}, $\Phi$ takes values in the smooth (even complex) manifold of irreducible representations.  Thus, by Sard's theorem the image of $\Phi$ is measure zero in the character variety which proves the theorem.  
\end{proof}

\textbf{Remark:}  Though we do not pursue this here, it is remarked by Uhlenbeck \cite{UHL83} that the map $\Phi$ is real analytic, although this would require a careful proof that the moduli space $\mathcal{H}$ is real analytic; this should be an application of the real analytic implicit function theorem in the banach manifolds which Taubes uses to construct $\mathcal{H}.$  With these technicalities assumed, the critical sets of $\Phi$ should then be analytic sets.  

\section{From $\mathbb{H}^3$ to CAT$(-1)$ Riemannian manifolds}\label{sec: CAT(-1)}

In this section, we will generalize Theorem \ref{thm: hdimbound} and the subsequent corollaries to the setting of a convex-cocompact representations $\rho:\pi_1(\Sigma)\rightarrow \text{Isom}(X)$ where $X$ is a CAT$(-1)$ Riemannian n-manifold and $n\geq 3.$  

\begin{definition}
An $n$-dimensional Riemannian manifold is CAT$(-1)$ if it is $1$-connected and complete with sectional curvature less than or equal to $-1.$
\end{definition}

Let $f:\widetilde{\Sigma}\rightarrow X$ be a minimal immersion of the universal cover of $\Sigma$ into a CAT$(-1)$ Riemannian $n$-manifold $X.$  We pause to review the basic submanifold theory.  Let $\nabla$ denote the Levi-Civita connection of $X$ and $\nabla^{T}$ the component of $\nabla$ tangential to the image of $f.$  Then the second fundamental form is the symmetric 2-tensor with values in the normal bundle given by,
\begin{align}
B(X,Y)=\nabla_{X} Y-\nabla_{X}^{T} Y,
\end{align}
where $X, Y\in \Gamma(f^{*}TX)$ are tangent to the image of $f.$  Thus, $B$ is a vector valued symmetric $2$-tensor, which we write in a local trivialization as $(B_1,...,B_{n-2}).$  Then each $B_i$ is an ordinary real valued $2$-tensor.  Denoting by $g$ the induced metric of the immersion $f,$ the immersion is \textit{minimal} if the trace of the $B_i$ with respect to $g$ simultaneously vanish:
\begin{align}
\text{tr}_g{B_i}=H_i=0.
\end{align}
For each $i,\ H_i$ is the i-th mean curvature of the immersion.  Denoting the Riemannian metric on $X$ by angled brackets $\langle \ ,\ \rangle,$ the Gauss equation reads:
\begin{align}
K_g=\text{Sec}(\partial_1, \partial_2)-\langle B(\partial_1,\partial_2), B(\partial_1,\partial_2)\rangle +\langle B(\partial_1,\partial_1), B(\partial_2,\partial_2)\rangle,
\end{align}
where $K_g$ is the sectional curvature of $g$ and $\text{Sec}(\partial_1,\partial_2)$ is the sectional curvature of the two plane spanned by $\{\partial_1,\partial_2\}$ computed in the Riemannian metric of $X$.
Choosing isothermal coordinates on $\widetilde{\Sigma}$ for the metric $g$ and writing the result with respect to an orthonormal framing of the normal bundle, the minimality of $f$ implies,
\begin{align}
B(\partial_1,\partial_1)=-B(\partial_2,\partial_2).
\end{align}
This verifies that the Gauss equation in this setting is,
\begin{align}
K_{g}=\text{Sec}(\partial_1,\partial_2)-\frac{1}{2}\lVert B\rVert_{g}^2.
\end{align}
Since the sectional curvature of $X$ is bounded above by $-1,$ as in the hyperbolic case the sectional curvature of $g$ is also bounded above by $-1.$

\begin{definition} Let $X$ be an $n$-dimensional CAT$(-1)$ Riemannian manifold.  A discrete, faithful representation $\rho:\pi_1(\Sigma)\rightarrow \text{\textnormal{Isom}}(X)$ with $\rho(\pi_1(\Sigma))=\Gamma$ is convex-cocompact if there is a geodesically convex, $\Gamma$-invariant subset $K\subset X$ upon which the action of $\Gamma$ is cocompact.
\end{definition}

Now let $\rho:\pi_1(\Sigma)\rightarrow \text{\textnormal{Isom}}(X)$ be a convex-cocompact representation.  Given a marked conformal structure $\sigma \in \mathcal{T}$ in the Teichm\"{u}ller space of $\Sigma,$ it is proved by Goldman-Wentworth \cite{GW07} that there exists a $\rho$-equivariant harmonic map,
\begin{align}
f_{\rho}:(\widetilde{\Sigma},\sigma)\rightarrow X.
\end{align}
Formally, $f_{\rho}$ is a minimizer for the Dirichlet energy, 
\begin{align}
E(u):=\frac{1}{2}\int_{F} \lVert du \rVert^2 dV_{g},
\end{align}
where $u:\widetilde{\Sigma}\rightarrow X$ is any smooth $\rho$-equivariant map, $F\subset \widetilde{\Sigma}$ is a fundamental domain for the action of $\pi_1(\Sigma),$ and the integrand is the Hilbert-Schmidt norm of the differential $du$ times the volume element constructed using any Riemannian metric $g$ in the conformal class of $\sigma.$  This is independent of such a choice since the energy with $2$-dimensional domain is conformally invariant.

Furthermore, results of Al'ber and Hartman \cite{HAR67} guarantee that the harmonic map $f_{\rho}$ is unique
unless $f_{\rho}$ maps onto a single geodesic.  In our situation this never occurs: by equivariance, this would imply that the image of $\rho$ consists of translations along a single geodesic.  Hence, $\rho$ maps $\pi_1(\Sigma)$ faithfully onto an abelian group which is impossible.  

Thus, we conclude that for each convex-cocompact $\rho: \pi_1(\Sigma)\rightarrow \text{Isom}(X),$ there exists a unique $\rho$-equivariant harmonic map.  The regularity theory for harmonic maps implies (see Tromba \cite{TRO92} for a careful proof in the $X=\mathbb{H}^2$ case) that this assignment defines a smooth function on Teichm\"{u}ller space, called the \textit{energy functional}:
\begin{align}
\mathcal{E}_{\rho}:\mathcal{T}\rightarrow \mathbb{R}_{\geq 0},
\end{align}
which records the energy of the unique $\rho$-equivariant harmonic map $f_{\rho}:(\widetilde{\Sigma},\sigma)\rightarrow X.$

Also due to Goldman-Wentworth is the following crucial theorem:
\begin{theorem}[\cite{GW07}]\label{thm: proper}
If $\rho:\pi_1(\Sigma)\rightarrow \text{\textnormal{Isom}}(X)$ is convex-cocompact, the the energy functional $\mathcal{E}_{\rho}$ is a proper function on Teichm\"{u}ller space.
\end{theorem}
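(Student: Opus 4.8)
The plan is to prove properness by showing that $\mathcal{E}_{\rho}(\sigma_n)\to\infty$ whenever $\sigma_n$ leaves every compact subset of $\mathcal{T}.$ Write $\Gamma=\rho(\pi_1(\Sigma)),$ let $C\subset X$ be the convex hull of the limit set $\Lambda_\Gamma,$ and set $Y=C/\Gamma,$ which by convex-cocompactness is a compact CAT$(-1)$ manifold. I would first record two consequences of convex-cocompactness. Since $\rho$ is discrete and faithful and $Y$ is compact, $Y$ contains only finitely many closed geodesics below any given length, so there is $\ell_0>0$ with $\ell_X(\rho(\gamma))\geq\ell_0$ for every $\gamma\in\pi_1(\Sigma)\setminus\{1\}.$ Moreover, for each $\sigma$ the $\rho$-equivariant harmonic map $f_\sigma\colon(\widetilde\Sigma,\sigma)\to X$ has image in $C$: nearest-point projection onto the convex set $C$ is $1$-Lipschitz and $\Gamma$-equivariant, so $\pi_C\circ f_\sigma$ is an equivariant competitor of no greater energy, and uniqueness of the energy minimizer (Al'ber and Hartman \cite{HAR67}) forces $f_\sigma=\pi_C\circ f_\sigma.$ Hence $f_\sigma$ descends to a map $\Sigma\to Y$ in a fixed homotopy class whose energy is $\mathcal{E}_\rho(\sigma).$

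I would then split according to the behaviour of the systole $\operatorname{sys}(\sigma_n)$ of the uniformizing hyperbolic metric. Suppose first $\operatorname{sys}(\sigma_n)\to 0,$ let $\gamma_n$ realize the systole, and let $\mathcal{C}_n$ be its standard embedded collar. In Fermi coordinates $(r,\xi)$ along $\gamma_n,$ with $\xi\in\mathbb{R}/\ell_n\mathbb{Z},$ $\ell_n:=\ell_{\sigma_n}(\gamma_n),$ and half-width $w_n$ determined by $\sinh(w_n)\sinh(\ell_n/2)=1,$ the collar metric is $dr^2+\cosh^2(r)\,d\xi^2.$ For each fixed $r$ the loop $\xi\mapsto f_{\sigma_n}(r,\xi)$ projects to a loop in $Y$ freely homotopic to the closed geodesic of $\rho(\gamma_n),$ hence of length at least $\ell_0,$ so Cauchy--Schwarz gives $\int_0^{\ell_n}|\partial_\xi f_{\sigma_n}|^2\,d\xi\geq\ell_0^2/\ell_n.$ Integrating the energy density $\tfrac12(|\partial_r f|^2+\cosh^{-2}(r)|\partial_\xi f|^2)$ against $\cosh(r)\,dr\,d\xi$ over $\mathcal{C}_n$ then yields
\[
\mathcal{E}_\rho(\sigma_n)\ \geq\ \frac{\ell_0^2}{2\ell_n}\int_{-w_n}^{w_n}\frac{dr}{\cosh r}\ =\ \frac{\ell_0^2}{\ell_n}\arctan\!\big(\sinh w_n\big),
\]
which tends to $\infty$ since $\ell_n\to 0$ and $\arctan(\sinh w_n)\to\pi/2.$

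The remaining case is $\operatorname{sys}(\sigma_n)\geq\varepsilon_0>0$ with $\sigma_n\to\infty$ in $\mathcal{T}.$ By Mumford's compactness criterion the projections of $\sigma_n$ to moduli space are precompact, so after passing to a subsequence there are mapping classes $\psi_n$ with $\psi_n\cdot\sigma_n\to\tau\in\mathcal{T},$ and $\sigma_n\to\infty$ forces the $\psi_n$ to leave every finite subset of the mapping class group. A bookkeeping computation identifies $\mathcal{E}_\rho(\sigma_n)$ with the energy of the harmonic map $g_n\colon(\widetilde\Sigma,\psi_n\cdot\sigma_n)\to X$ equivariant for $\rho$ precomposed with the automorphism $(\psi_n)_*$ of $\pi_1(\Sigma).$ Arguing by contradiction, suppose $\mathcal{E}_\rho(\sigma_n)$ stays bounded. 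The domains converge, hence have uniformly bounded geometry; the targets all equal $C,$ whose curvature is $\leq-1<0;$ the Eells--Sampson Bochner inequality then bounds the energy densities of the $g_n$ uniformly, so after recentering each $g_n$ by an element of $\Gamma$ so that a fixed basepoint lands in a fixed compact fundamental domain of $C,$ I would extract a $C^\infty_{\mathrm{loc}}$-convergent subsequence with harmonic limit $g_\infty.$ The limit is non-constant because the recentered representations carry a fixed generator of $\pi_1(\Sigma)$ into $\Gamma\setminus\{1\},$ which is closed and misses the identity; equivariance of $g_\infty$ then forces the recentered representations to converge. As they all have image the discrete group $\Gamma,$ the images of a fixed finite generating set stabilize, so for large $n$ the representation $\rho\circ(\psi_n)_*$ is conjugate in $\Gamma$ to $\rho\circ(\psi_N)_*;$ injectivity of $\rho$ together with the Dehn--Nielsen--Baer embedding of the mapping class group into $\operatorname{Out}(\pi_1(\Sigma))$ then forces $\psi_n=\psi_N$ for large $n,$ contradicting $\psi_n\to\infty.$ Hence $\mathcal{E}_\rho(\sigma_n)\to\infty$ in this case as well, and properness follows.

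The step I expect to be the main obstacle is the bounded-systole case: it requires a compactness theorem for equivariant harmonic maps of bounded energy out of domains of uniformly bounded geometry into the non-compact convex core $C,$ which rests on the a priori energy-density estimate, and it requires careful handling of the interplay of markings, basepoints, and the $\Gamma$-conjugation ambiguity so that the resulting convergence of representations can be played off against the discreteness of $\Gamma.$ The collar estimate in the pinching case, by contrast, is elementary once the uniform lower bound $\ell_0$ on translation lengths supplied by convex-cocompactness is in hand.
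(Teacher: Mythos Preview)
The paper does not prove this statement: it is quoted as a theorem of Goldman--Wentworth \cite{GW07} and used as a black box to produce equivariant branched minimal immersions. There is therefore no proof in the paper to compare your proposal against.

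That said, your outline is a correct and standard route to properness via the thin/thick dichotomy for degenerating hyperbolic surfaces. Two refinements are worth flagging. First, the assertion that $f_\sigma$ lands in the convex core $C$ is slightly delicate as written: nearest-point projection $\pi_C$ is $1$-Lipschitz but need not be smooth, so $\pi_C\circ f_\sigma$ is a priori only a Lipschitz competitor; invoking uniqueness then requires either the Korevaar--Schoen framework for harmonic maps into NPC targets (which is the setting of \cite{GW07}) or a smoothing argument. A cleaner alternative is to note that $d(f_\sigma(\cdot),C)$ is subharmonic on the closed surface, hence constant, and that a positive constant is ruled out because in a CAT$(-1)$ space $\pi_C$ is strictly contracting on the level set at distance $c>0$, yielding a competitor of strictly smaller energy. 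Second, in the thick case, the decisive step --- that the recentered representations take values in the \emph{discrete} group $\Gamma$ and have uniformly bounded displacement on a fixed generating set, hence must eventually stabilize --- is exactly right, and the finish via Dehn--Nielsen--Baer is clean. These are technical polishings rather than genuine gaps; your identification of the bounded-systole compactness step as the crux is accurate.
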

As remarked in \cite{GW07}, this implies the existence of a critical point (not necessarily unique!) $\sigma_{\rho}\in\mathcal{T}$ for $\mathcal{E}_{\rho}.$  By Sacks-Uhlenbeck \cite{SU82}, $\sigma_{\rho}$ is critical for $\mathcal{E}_{\rho}$ if and only if ,
\begin{align}
f_{\rho}:(\widetilde{\Sigma},\sigma_{\rho})\rightarrow X,
\end{align}
is a branched isometric immersion, which together with harmonicity implies that, 
\begin{align}
f_{\rho}:(\widetilde{\Sigma},\sigma_{\rho})\rightarrow X,
\end{align}
is a \textit{branched minimal immersion}.  Recall that a branched immersion from an oriented surface $\Sigma$ to any manifold $N$ is a $C^1$-mapping $f:\Sigma\rightarrow N$ which is an immersion on the complement of a finite set of points $\{p_i\},$ and such that the differential vanishes at each $p_i.$  The set $\{p_i\}$ is the set of branch points.

In the present set-up, a theorem of Gulliver-Tomi \cite{GT89} allows us to rule out particular types of branch points.  The question of whether branch points can be entirely avoided seems very difficult; in dimension three we can invoke Theorem \ref{thm: exist minimal} to exclude branch points for branched minimal immersions which are minima for the area functional.  

Before we state the theorem, we need a definition.  Let $f:\Sigma\rightarrow N$ be a branched immersion and let $p\in\Sigma$ be a branch point.  Then on every neighborhood of $p,$ the mapping $f$ is $\ell+1$ to one for some non-negative integer $\ell.$   The number $\ell$ is the \textit{order of ramification} of the branch point $p.$  If none of the branch points of $f$ are ramified, we say that $f$ is an unramified branched immersion.
\begin{theorem}[\cite{GT89}]\label{thm: nobranch}
Let $\Sigma$ be a smooth closed, oriented surface and $N$ a Riemannian manifold.  Suppose,
\begin{align}
f:\Sigma\rightarrow N,
\end{align}
is a branched minimal immersion which induces an isomorphism $f_{*}:\pi_1(\Sigma)\rightarrow \pi_1(N).$  Then $f$ is an unramified branched immersion.
\end{theorem}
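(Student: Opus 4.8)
The statement is the theorem of Gulliver and Tomi \cite{GT89}; the plan is to reduce the claim to the hypotheses of that theorem and to recall the mechanism of its proof. First I would note that, for a branched \emph{minimal} immersion, a branch point $p$ is ramified (of order $\ell\geq 1$) precisely when it is a \emph{false branch point}, i.e.\ when $f$ is, locally, a nontrivial branched covering onto its image: by the local normal form for a branch point of a branched minimal immersion, one can choose conformal coordinates $z$ centered at $p$ and a regular chart on $N$ centered at $f(p)$ in which $f|_{D}=g\circ\phi$, with $\phi\colon D\to D'$ the $(\ell+1)$-fold branched covering $z\mapsto z^{\ell+1}$ of a disk and $g\colon D'\to N$ again a branched minimal immersion, now with an \emph{unramified} branch point (or none) at the origin. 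Hence ``$f$ is unramified'' is literally ``$f$ has no false branch points,'' and since $f$ is a branched minimal immersion inducing a $\pi_1$-isomorphism it satisfies the hypotheses of \cite{GT89} verbatim. I would then recall why that theorem holds; assume for contradiction that $f$ has a false branch point $p$.

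The geometric core is Gulliver's reduction of topological type. Using the factorization $f|_{D}=g\circ\phi$ and the minimality of the disk $g(D')\subset N$, one excises the branched disk $D$ from $\Sigma$ and re-attaches along $\partial D$ a surface that ``unfolds'' the $\ell+1$ sheets, carrying on it a conformal reparametrization of $g(D')$. This produces a closed oriented surface $\Sigma'$ together with a branched minimal immersion $f'\colon\Sigma'\to N$ with $f'(\Sigma')=f(\Sigma)$, with strictly fewer false branch points counted with order, with $\chi(\Sigma')>\chi(\Sigma)$, and — since the surgery only disentangles loops already carried by the common image $f(\Sigma)$ — with $f'_{\ast}\pi_1(\Sigma')\supseteq f_{\ast}\pi_1(\Sigma)$ and with $f'_{\ast}$ still injective on the non-spherical part. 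Iterating, I obtain a branched minimal immersion $\widehat f\colon\widehat\Sigma\to N$ with no false branch points, the same image as $f$, $\chi(\widehat\Sigma)>\chi(\Sigma)$, and the same two $\pi_1$-properties.

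Finally I would extract the contradiction from $f_{\ast}$ being an isomorphism. Since $f_{\ast}\pi_1(\Sigma)=\pi_1(N)$, the inclusion $\widehat f_{\ast}\pi_1(\widehat\Sigma)\supseteq f_{\ast}\pi_1(\Sigma)$ makes $\widehat f_{\ast}$ surjective onto $\pi_1(N)\cong\pi_1(\Sigma)$; discarding any $2$-sphere components (on which $\widehat f$ is $\pi_1$-trivial) and using that a closed surface group is freely indecomposable, the surjection is carried by a single closed oriented component $\widehat\Sigma_0$, on which $\widehat f_{\ast}$ is both surjective and injective, hence an isomorphism of closed surface groups; as a $\pi_1$-surjection of closed oriented surfaces cannot lower the genus, this forces $\chi(\widehat\Sigma_0)=\chi(\Sigma)$, which together with $\chi(\widehat\Sigma)>\chi(\Sigma)$ is the desired contradiction once one checks that the reduction can be performed so as not to shed Euler characteristic into spurious spherical components. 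The main obstacle is precisely this middle step: arranging the cut-and-paste so that it remains a branched \emph{minimal} immersion with unchanged image, strictly simplifies the topology, and faithfully preserves the $\pi_1$-picture (surjectivity onto $\pi_1(N)$, injectivity, control of spherical components). This delicate surgery — in the spirit of Gulliver's earlier reduction-of-topological-type results — is exactly the content of \cite{GT89}, and in the present setting it suffices to invoke it.
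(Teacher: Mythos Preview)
Your proposal is closer to a citation than a proof: you sketch a cut-and-paste reduction of topological type, then concede that the central step---performing the surgery so that the result is still a branched \emph{minimal} immersion with the same image, strictly higher Euler characteristic, and the correct $\pi_1$-bookkeeping, without shedding spurious spherical components---is ``delicate'' and defer it entirely to \cite{GT89}. That is the whole content of the theorem, so at this point you have invoked it rather than proved it.

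The paper takes a genuinely different and much shorter route. Instead of iterated local surgery, it uses a single global input: the Gulliver--Osserman--Royden factorization theorem, which says that any branched immersion with the unique continuation property factors as $f=f^{\sim}\circ\pi$, where $\pi\colon\Sigma\to\Sigma^{\sim}$ is a branched covering whose branch points and ramification orders coincide exactly with those of $f$. No minimality of $f^{\sim}$ is needed, and nothing is iterated. From here the argument is pure Riemann--Hurwitz plus elementary group theory: since $f_{*}=f^{\sim}_{*}\circ\pi_{*}$ is an isomorphism, $\pi_{*}$ is injective (hence $\chi(\Sigma^{\sim})<0$) and $f^{\sim}_{*}\colon\pi_1(\Sigma^{\sim})\to\pi_1(N)\cong\pi_1(\Sigma)$ is surjective (hence $\mathrm{genus}(\Sigma^{\sim})\geq\mathrm{genus}(\Sigma)$, by comparing minimal cardinalities of generating sets). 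Riemann--Hurwitz then gives
\[
0\;\leq\;\mathrm{Deg}(D_f)\;\leq\;2\bigl(g-\widetilde{g}\bigr)\;\leq\;0,
\]
so the ramification divisor is trivial and $f$ is unramified.

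The gain from the paper's approach is that all of the analysis is absorbed into one clean black box (the GOR factorization, which is about branched immersions in general, not minimal ones), and the remainder is a five-line computation with no delicate surgery, no preservation of minimality, and no worry about spherical components. Your sketch, by contrast, leaves precisely the analytic core unaddressed.
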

\textbf{Remark:}  This Theorem is not stated exactly this way in \cite{GT89}.  First, they restrict to surfaces with boundary as they have applications to the Plateau problem in mind.  Nonetheless, the proof they present works, and is substantially simplified, in the closed case.  For completeness, we present this simplified proof, assuming a certain factorization theorem for branched immersions, in an Appendix at the end of this paper.  Second, they prove the theorem more generally for any branched immersion which has the \textit{unique continuation property}, although they note that branched minimal immersions are premiere examples of this phenomena.  The unique continuation property guarantees that the branched immersion is uniquely determined by it's value restricted to any open subset of $\Sigma.$  

We can finally prove:
\begin{theorem}\label{thm: minexist}
Let $X$ be an $n$-dimensional CAT$(-1)$ Riemannian manifold.  Let $\rho:\pi_1(\Sigma)\rightarrow \text{\textnormal{Isom}}(X)$ be a convex-cocompact representation.  Then there exists a $\rho$-equivariant unramified branched minimal immersion.
\begin{align}
f:\widetilde{\Sigma}\rightarrow X.
\end{align}
\end{theorem}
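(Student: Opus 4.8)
The plan is to assemble the ingredients developed in the paragraphs immediately preceding the statement; very little new work is required. First I would invoke Goldman--Wentworth to produce, for each marked conformal structure $\sigma\in\mathcal{T}$, the unique $\rho$-equivariant harmonic map $f_{\rho}:(\widetilde{\Sigma},\sigma)\to X$, together with the smooth energy functional $\mathcal{E}_{\rho}:\mathcal{T}\to\mathbb{R}_{\geq 0}$. By Theorem \ref{thm: proper} this functional is proper, and since it is bounded below it attains a minimum at some $\sigma_{\rho}\in\mathcal{T}$; in particular $\sigma_{\rho}$ is a critical point. By the Sacks--Uhlenbeck characterization of critical points of $\mathcal{E}_{\rho}$ recalled above --- the differential of $\mathcal{E}_{\rho}$ at $\sigma$ is represented by the Hopf differential of $f_{\rho}$, so at a critical point $f_{\rho}$ is weakly conformal, hence a branched minimal immersion --- the map $f_{\rho}:(\widetilde{\Sigma},\sigma_{\rho})\to X$ is a $\rho$-equivariant branched minimal immersion.

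It then remains only to upgrade "branched" to "unramified branched", and here I would pass to the quotient. Since $\Gamma=\rho(\pi_1(\Sigma))$ is discrete and torsion-free, it acts freely and properly discontinuously on all of $X$ (convex-cocompactness is not needed for this), so $N:=X/\Gamma$ is a Riemannian manifold; the $\rho$-equivariant $C^{1}$ map $f_{\rho}$ descends to a $C^{1}$ map $\bar f_{\rho}:\Sigma\to N$ which is again a branched minimal immersion, and whose induced homomorphism $(\bar f_{\rho})_{*}=\rho:\pi_1(\Sigma)\to\Gamma=\pi_1(N)$ is an isomorphism. Theorem \ref{thm: nobranch} of Gulliver--Tomi applies verbatim to $\bar f_{\rho}$ and shows that it is an unramified branched immersion. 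Since the order of ramification at a branch point is a purely local invariant, the lift $f_{\rho}:\widetilde{\Sigma}\to X$ is unramified as well, which is the assertion of the theorem.

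I do not expect a serious obstacle at this stage: the genuinely delicate analytic input --- namely that a critical point of $\mathcal{E}_{\rho}$ really is weakly conformal and that a weakly conformal harmonic map from a surface is, with the appropriate regularity, a branched minimal immersion --- is precisely the Sacks--Uhlenbeck machinery that is cited rather than reproved here, and the properness that furnishes the critical point is Theorem \ref{thm: proper}. The one point worth emphasizing in the write-up is the descent step: because it requires only discreteness and torsion-freeness of $\Gamma$, one indeed has $\pi_1(N)=\Gamma$ with $(\bar f_{\rho})_{*}$ an isomorphism, so that the hypotheses of Theorem \ref{thm: nobranch} are met and the conclusion transfers back to the universal cover.
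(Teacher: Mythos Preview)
Your proposal is correct and follows essentially the same approach as the paper's own proof: invoke properness of $\mathcal{E}_{\rho}$ to obtain a critical point, use the Sacks--Uhlenbeck characterization to conclude the corresponding harmonic map is a branched minimal immersion, descend to $\Sigma\to X/\Gamma$, and apply Gulliver--Tomi (Theorem \ref{thm: nobranch}) using that $(\bar f_{\rho})_{*}=\rho$ is an isomorphism. If anything, you are slightly more explicit than the paper about why a minimum exists and why the conclusion lifts back upstairs.
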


\begin{proof}
By Theorem \ref{thm: proper}, the energy functional $\mathcal{E}_{\rho}:\mathcal{T}\rightarrow \mathbb{R}$ is proper, and by the discussion that follows Theorem \ref{thm: proper} this implies the existence of a $\rho$-equivariant branched minimal immersion,
\begin{align}
f:\widetilde{\Sigma}\rightarrow X.
\end{align}
As $\rho$ is discrete and faithful, the mapping $f$ descends to the quotient as a branched minimal immersion,
\begin{align}
f':\Sigma\rightarrow X/\Gamma,
\end{align}
where $\Gamma=\rho(\pi_1(\Sigma)).$  Since $\rho$ is faithful and $f'_{*}=\rho,$ it follows that $f'$ induces an isomorphism on the level of fundamental group, 
\begin{align}
f'_{*}:\pi_1(\Sigma)\rightarrow \Gamma.
\end{align}
Thus, the hypotheses of Theorem \ref{thm: nobranch} are satisfied which implies that $f'$ is an unramified branched immersion, hence so is $f.$ This completes the proof.  
\end{proof}

With the above discussion in place, the following theorem is a generalization of Theorem \ref{thm: hdimbound}.
\begin{theorem}\label{thm: gendimbound}
Let $X$ be an $n$-dimensional \text{CAT}$(-1)$ Riemannian manifold.  Let $\rho:\pi_1(\Sigma)\rightarrow \text{Isom}(X)$ be a convex-cocompact representation with $\rho(\pi_1(\Sigma))=\Gamma.$  Let $f:\Sigma\rightarrow X/\Gamma$ be a $\pi_1$-injective branched minimal immersion with induced metric $g$ and second fundamental form $B.$  Let $\hat{\Sigma}=\Sigma\backslash \{p_1,...,p_k\}$ where $\{p_1,...,p_k\}$ is the locus of branch points.  Then,
\begin{align}
\frac{1}{\text{Vol(g)}}\int_{\hat{\Sigma}}\sqrt{-\text{Sec}(\partial_1,\partial_2)+\frac{1}{2}\lVert B\rVert_{g}^2}\ dV_{g}\leq \text{\textnormal{H}}.\text{\textnormal{dim}}(\Lambda_{\Gamma}).
\end{align}
\end{theorem}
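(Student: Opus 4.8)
The plan is to run the proof of Theorem \ref{thm: hdimbound} with $\mathbb{H}^3$ replaced by $X$ and the smooth minimal surface replaced by the (now possibly singular) induced metric of $f$, and then to reduce the resulting intrinsic curvature estimate back to Theorem \ref{thm: entbound} by smoothing away the branch points. First I would lift $f$ to a $\rho$-equivariant branched minimal immersion $\widetilde{f}\colon\widetilde{\Sigma}\to X$ and record the structure of the induced metric $g$: in a suitable conformal coordinate $z$ about a branch point one has $g=|z|^{2k}h$ with $h$ a smooth Riemannian metric and $k\ge 1$, so $g$ is a genuine smooth metric on $\hat{\Sigma}$, its curvature is $\le -1$ there by the Gauss equation $K_g=\mathrm{Sec}(\partial_1,\partial_2)-\tfrac{1}{2}\lVert B\rVert_g^2$ (using $\mathrm{Sec}_X\le -1$), and $g$ extends across each $p_i$ as a conical singularity of angle $2\pi(k+1)\ge 2\pi$. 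In particular $-K_g$ and $dV_g$ are integrable over $\hat{\Sigma}$, and --- because the cone angles are at least $2\pi$ --- $(\widetilde{\Sigma},d_g)$ is a proper geodesic space on which $\Gamma=\rho(\pi_1(\Sigma))$ acts cocompactly by isometries.

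The second step is the orbit-counting comparison, exactly as in Theorem \ref{thm: hdimbound}. Since $\widetilde{f}$ is an isometric immersion it is distance non-increasing: a $g$-geodesic from $x$ to $y$ maps to a path in $X$ of the same length, so $d_X(\widetilde{f}(x),\widetilde{f}(y))\le d_g(x,y)$; by equivariance, $\{\gamma\in\Gamma:d_g(x,\gamma x)\le R\}\subseteq\{\gamma:d_X(\widetilde{f}(x),\rho(\gamma)\widetilde{f}(x))\le R\}$. As the $\Gamma$-action on $(\widetilde{\Sigma},g)$ is cocompact with a fundamental domain of finite diameter and positive volume, comparing metric balls with translates of the fundamental domain shows that the exponential growth rate of the left-hand counting function is the volume entropy $E(g):=\limsup_{R\to\infty}R^{-1}\log\lvert B_g(p,R)\rvert$. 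Taking logarithms, dividing by $R$ and letting $R\to\infty$ therefore gives $E(g)\le\delta(\Gamma)$, which by convex-cocompactness and Theorem \ref{thm: enthdim} equals $\text{\textnormal{H.dim}}(\Lambda_{\Gamma})$.

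It remains to prove $\mathrm{Vol}(g)^{-1}\int_{\hat{\Sigma}}\sqrt{-K_g}\,dV_g\le E(g)$, the analogue of Theorem \ref{thm: entbound} for the cone metric $g$; here I would approximate. For small $\varepsilon>0$ construct a smooth metric $g_\varepsilon$ on $\Sigma$ that equals $g$ outside the $\varepsilon$-disks about the $p_i$ and has $K_{g_\varepsilon}\le -1$ everywhere, by replacing each conical tip with a rotationally symmetric smooth cap $dr^2+\varphi_\varepsilon(r)^2\,d\theta^2$ with $\varphi_\varepsilon(0)=0$, $\varphi_\varepsilon'(0)=1$, $\varphi_\varepsilon''\ge\varphi_\varepsilon$, matched $C^2$ to $g$ at $r=\varepsilon$; one arranges each cap to have diameter $O(\varepsilon)$ and volume $o(1)$. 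Each $(\Sigma,g_\varepsilon)$ is smooth with negative curvature, so the chain of inequalities in the proof of Theorem \ref{thm: entbound} (Manning's Theorem \ref{thm: manningbound}, the variational principle, and the scaling of entropy) applies verbatim and gives $\mathrm{Vol}(g_\varepsilon)^{-1}\int_\Sigma\sqrt{-K_{g_\varepsilon}}\,dV_{g_\varepsilon}\le E(g_\varepsilon)$. Letting $\varepsilon\to 0$: $\mathrm{Vol}(g_\varepsilon)\to\mathrm{Vol}(g)$; the cap contributions to the curvature integral are $O\bigl(\mathrm{Vol}_{g_\varepsilon}(\mathrm{cap})^{1/2}\bigr)\to 0$ by Cauchy--Schwarz (the total curvature of each cap being bounded, by Gauss--Bonnet), so $\int_\Sigma\sqrt{-K_{g_\varepsilon}}\,dV_{g_\varepsilon}\to\int_{\hat{\Sigma}}\sqrt{-K_g}\,dV_g$; and because the finitely many cone points are uniformly separated and, in negative curvature, a geodesic meets each small convex cap in a single subinterval, a $g_\varepsilon$-geodesic of length $R$ passes through only $O(R)$ caps, each altering length by $O(\varepsilon)$, so $d_g\le(1+O(\varepsilon))d_{g_\varepsilon}$ and hence $E(g_\varepsilon)\le(1+O(\varepsilon))E(g)$. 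Combining with the second step, $\mathrm{Vol}(g_\varepsilon)^{-1}\int_\Sigma\sqrt{-K_{g_\varepsilon}}\,dV_{g_\varepsilon}\le(1+O(\varepsilon))\,\text{\textnormal{H.dim}}(\Lambda_{\Gamma})$, and passing to the limit yields $\mathrm{Vol}(g)^{-1}\int_{\hat{\Sigma}}\sqrt{-K_g}\,dV_g\le\text{\textnormal{H.dim}}(\Lambda_{\Gamma})$; finally the Gauss equation rewrites $-K_g=-\mathrm{Sec}(\partial_1,\partial_2)+\tfrac12\lVert B\rVert_g^2$, which is the form stated in the theorem.

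I expect the smoothing-and-limiting step to be the main obstacle. One must check that a conical tip of angle $\ge 2\pi$ can be capped off smoothly keeping $K\le -1$ (i.e.\ solve $\varphi''\ge\varphi$ with the prescribed boundary data, which is easy here since the tip of a hyperbolic cone is nearly Euclidean and the change of slope required across the cap is large compared to $\varepsilon$), and --- more delicately --- that the volume entropy is continuous under this degeneration, which is where the uniform separation of cone points and the single-subinterval property of geodesics in negative curvature enter. An alternative route would be to prove Manning's inequality directly for hyperbolic cone surfaces of cone angle $\ge 2\pi$, avoiding the approximation altogether, at the cost of redoing Manning's Jacobi-field argument in the presence of geodesics that run into the singular locus.
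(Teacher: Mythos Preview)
Your sketch is sound and reaches the conclusion, but it takes a genuinely different route from the paper. The paper perturbs \emph{extrinsically}: using that $\dim X\geq 4$ (treating $\dim X=3$ separately via Theorem~\ref{thm: exist minimal}), it deforms the map $f$ itself to smooth immersions $f_{\varepsilon}\colon\Sigma\to X/\Gamma$ that agree with $f$ outside $3\varepsilon$-balls about the branch points and whose induced metrics $g_{\varepsilon}$ have sectional curvature bounded above by some $-\kappa<0$. Because each $f_{\varepsilon}$ is still a genuine map into $X/\Gamma$, the orbit-counting comparison of Theorem~\ref{thm: hdimbound} applies directly to $g_{\varepsilon}$, giving $E(g_{\varepsilon})\leq\text{H.dim}(\Lambda_{\Gamma})$ for every $\varepsilon$ without any need to compare $E(g_{\varepsilon})$ with $E(g)$; one then passes to the limit only on the integral side via dominated convergence. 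Your approach instead smooths the cone metric $g$ \emph{intrinsically} on $\Sigma$, which has the advantage of being dimension-independent and conceptually cleaner, but forces you to prove the entropy continuity $E(g_{\varepsilon})\leq (1+O(\varepsilon))E(g)$ --- exactly the step you flag as the main obstacle, and which the paper's extrinsic perturbation sidesteps entirely. Both methods need a limiting argument for the curvature integral, and both are valid; the paper's buys simplicity on the entropy side at the cost of the codimension-two hypothesis and a somewhat ad hoc perturbation (relegated to the appendix as Proposition~\ref{pert branched}), while yours is more self-contained but leaves more analytic work in the entropy comparison.
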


\textbf{Remark:}  In the above theorem the Hausdorff dimension of $\Lambda_{\Gamma}$ is being computed with respect to the Gromov metric on $\partial_{\infty}(X).$

\begin{proof}
First note that by Theorem \ref{thm: minexist}, there exists a $\pi_1$-injective branched minimal immersion in the quotient $X/\Gamma,$
\begin{align}
f:\Sigma\rightarrow X/\Gamma.
\end{align}
If the dimension of $X$ is equal to $3,$ then Theorem \ref{thm: exist minimal} implies that $f$ can be taken to be an immersion and the proof of the theorem follows exactly as in Theorem \ref{thm: hdimbound}.  So assume the dimension of $X$ is greater than $3.$

The strategy is as follows: let $\mathfrak{B}:=\{p_1,...p_k\}$ be the branching locus.  For any small $\varepsilon>0,$ we will construct an immersion which is equal to $f$ away from $\varepsilon$-neighborhoods of the $p_i$ and whose induced metric has negative sectional curvature.  We then apply the argument of Theorem \ref{thm: hdimbound} to these perturbed surfaces; a simple limiting argument will complete the proof.  We state the exact requirements for the perturbation in the following claim, relegating the tangential proof to the appendix.

\textbf{Claim:}  Let $(M,h)$ be a Riemannian manifold with sectional curvature at most $-1$ and of dimension at least $4,$ and suppose,
\begin{align}
f:\Sigma\rightarrow M,
\end{align}
is a branched minimal immersion.  Then there exists $\varepsilon_0> 0$ and smooth maps,
\begin{align}
f_{\varepsilon}:\Sigma\rightarrow M,
\end{align}
for all $\varepsilon\in [0,\varepsilon_{0}]$ satisfying the following properties:
\begin{enumerate}
\item The maps $f_{\varepsilon}$ are immersions for $\varepsilon>0$ and $f_0=f.$  Denote the induced metrics by 
$f_{\varepsilon}^{*}h=g_{\varepsilon}.$  Also, let $f^{*}h=g$ be the induced metric via $f$ on the complement of the branch points $\mathcal{B}.$
\item \label{imm equal} The maps $f_{\varepsilon}$ satisfy $f_{\varepsilon}=f$ on $\Sigma(\varepsilon)$ where, 
\begin{align}
\Sigma(\varepsilon)=\Sigma\backslash \{ \cup B_{g}(p_i, 3\varepsilon) \},
\end{align}
with the union taken over the set of branch points $\mathcal{B}.$ 
\item Let $K_{\varepsilon}$ be the Gauss curvature of the metric $g_{\varepsilon}$ and $K_g$ be the Gauss curvature of the metric $g$ on the complement of the branch points $\mathcal{B}.$  Then 
$K_{\varepsilon}\rightarrow K_g$ pointwise on the complement of $\mathcal{B}$ in $\Sigma.$  Note that this formally follows from the previous property.
\item  There exists $\kappa(\varepsilon_0)>0$ such that $K_{\varepsilon}(p)<-\kappa(\varepsilon_0)$ for all $\varepsilon\in (0,\varepsilon_{0}]$ and for all $p\in\Sigma.$
\end{enumerate} 

Assuming the claim, the proof of the theorem is as follows.
 
The exact same argument as in Theorem \ref{thm: hdimbound} implies,
\begin{align}\label{epsbound}
\frac{1}{\text{\textnormal{Vol}}(g_{\varepsilon})}\int_{\Sigma} \sqrt{-K_{{\varepsilon}}} dV_{g_{\varepsilon}}\leq E(g_{\varepsilon})\leq \text{\textnormal{H.dim}}(\Lambda_{\Gamma}).
\end{align}
where $E(g_{\varepsilon})$ is the volume entropy of the induced metric $g_{\varepsilon}$ from the immersion $f_{\varepsilon}.$

Now recall the definition,
\begin{align}
\Sigma(\varepsilon)=\Sigma\backslash \{ \cup B_{g}(p_i, 3\varepsilon) \},
\end{align}
where the union is taken over all the branch points.  By \eqref{imm equal}, $g_{\varepsilon}=g$ on $\Sigma(\varepsilon),$ thus, 
\begin{align}
\frac{1}{\text{\textnormal{Vol}}(g_{\varepsilon})}\int_{\Sigma(\varepsilon)}\sqrt{-K_{g}} dV_{g}\leq 
\frac{1}{\text{\textnormal{Vol}}(g_{\varepsilon})}\int_{\Sigma} \sqrt{-K_{{\varepsilon}}} dV_{g_{\varepsilon}}.
\end{align}
Applying \eqref{epsbound} yields,
\begin{align} \label{epsbound2}
\frac{1}{\text{\textnormal{Vol}}(g_{\varepsilon})}\int_{\Sigma}\chi_{\varepsilon}\sqrt{-K_{g}} dV_{g}\leq \text{\textnormal{H.dim}}(\Lambda_{\Gamma}).
\end{align}
where $\chi_{\varepsilon}$ is the characteristic function of $\Sigma(\varepsilon).$

For $\varepsilon$ varying in any compact set including $0,$ the volume satisfies $\text{Vol}(g_{\varepsilon})>C$ for some $C>0$ independent of $\varepsilon.$  Hence, the bound,
\begin{align}
\frac{1}{\text{\textnormal{Vol}}(g_{\varepsilon})}\chi_{\varepsilon}\sqrt{-K_g}\leq \frac{1}{C}\sqrt{-K_g},
\end{align}
is valid on all of $\Sigma.$  Applying the Cauchy-Schwarz inequality reveals,
\begin{align}
\left(\int_{\Sigma}\sqrt{-K_g}dV_{g}\right)^2\leq \text{\textnormal{Vol}}(g)\int_{\Sigma} -K_g dV_{g}.
\end{align}
Furthermore, the Gauss equation implies a uniform upper bound on $\textnormal{Vol}(g).$
Additionally, the Gauss-Bonnet theorem (for surfaces with cone singularities) implies,
\begin{align}
\int_{\Sigma} -K_g dV_{g}< \infty,
\end{align}
which ensures that $\sqrt{-K_g}$ is integrable with respect to $dV_{g}.$  Since,
\begin{align}
\frac{1}{\text{\textnormal{Vol}}(g_{\varepsilon})}\chi_{\varepsilon}\sqrt{-K_g}\rightarrow \frac{1}{\text{\textnormal{Vol}}(g)}\sqrt{-K_g},
\end{align}
pointwise on the complement of the branch points, the dominated convergence theorem implies,
\begin{align}
\frac{1}{\text{\textnormal{Vol}}(g_{\varepsilon})}\int_{\Sigma}\chi_{\varepsilon} \sqrt{-K_{g}} dV_{g}\rightarrow \frac{1}{\text{\textnormal{Vol}}(g)}\int_{\Sigma} \sqrt{-K_g} dV_{g},
\end{align}
as $\varepsilon\rightarrow 0.$
Hence, letting $\varepsilon\rightarrow 0$ in $\eqref{epsbound2}$ yields,
\begin{align}
\frac{1}{\text{\textnormal{Vol}}(g)}\int_{\hat{\Sigma}} \sqrt{-K_{g}} dV_{g}\leq \text{\textnormal{H.dim}}(\Lambda_{\Gamma}).
\end{align}
This completes the proof.
\end{proof}

\begin{corollary}
Let $\rho:\pi_1(\Sigma)\rightarrow \text{\textnormal{Isom}}(X)$ be a convex-cocompact representation with $\rho(\pi_1(\Sigma))=\Gamma.$  Then $\text{\textnormal{H.dim}}(\Lambda_{\Gamma})=1$ if and only if there exists a $\Gamma$-invariant totally geodesic embedding,
\begin{align}
f:\mathbb{H}^2\rightarrow X.
\end{align}
\end{corollary}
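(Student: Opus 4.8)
The plan is to deduce both implications from Theorem~\ref{thm: gendimbound} together with some soft $\text{CAT}(-1)$ geometry, with the only delicate point being the behavior at branch points.

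For the implication ``$\Leftarrow$'', suppose $f:\mathbb{H}^2\rightarrow X$ is a totally geodesic embedding whose image is $\Gamma$-invariant, and let $\rho_0$ denote the induced action of $\pi_1(\Sigma)$ on $\mathbb{H}^2$. I would first observe that a totally geodesic submanifold of a $\text{CAT}(0)$ space is convex and that $f$ is a global isometry onto its image: a geodesic segment of $\mathbb{H}^2$ maps to a geodesic segment of $X$ of the same length, which by uniqueness of geodesics in $\text{CAT}(-1)$ must be the geodesic joining its endpoints. Consequently $d_X(x,\rho(\gamma)x)=d_{\mathbb{H}^2}(p,\rho_0(\gamma)p)$ for $x=f(p)$, so the orbit counting functions agree and $\delta(\Gamma)=\delta_{\mathbb{H}^2}(\rho_0(\pi_1(\Sigma)))$. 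Since $\rho_0(\pi_1(\Sigma))$ is a discrete, torsion-free subgroup of $\text{Isom}(\mathbb{H}^2)$ isomorphic to a closed surface group, and noncompact surfaces have free fundamental group, its quotient is a closed hyperbolic surface; hence it is a cocompact lattice with $\delta_{\mathbb{H}^2}=1$, and Theorem~\ref{thm: enthdim} gives $\text{H.dim}(\Lambda_{\Gamma})=1$.

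For ``$\Rightarrow$'', assume $\text{H.dim}(\Lambda_{\Gamma})=1$. Theorem~\ref{thm: minexist} furnishes a $\pi_1$-injective branched minimal immersion $f:\Sigma\rightarrow X/\Gamma$ with induced data $(g,B)$ and finite branch locus $\mathfrak{B}$; write $\hat{\Sigma}=\Sigma\setminus\mathfrak{B}$. Plugging into Theorem~\ref{thm: gendimbound} and using that $X$ is $\text{CAT}(-1)$, so $-\text{Sec}(\partial_1,\partial_2)\geq 1$, the integrand $\sqrt{-\text{Sec}(\partial_1,\partial_2)+\tfrac12\lVert B\rVert_g^2}$ is $\geq 1$ pointwise; as $\hat{\Sigma}$ has full measure in $\Sigma$, the normalized integral is then $\geq 1$, forcing equality everywhere. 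By continuity of the integrand on $\hat{\Sigma}$ this gives $\text{Sec}(\partial_1,\partial_2)\equiv -1$ and $B\equiv 0$ on $\hat{\Sigma}$, so $f$ is totally geodesic off $\mathfrak{B}$ and, by the Gauss equation, $g$ has constant curvature $-1$ there.

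The main obstacle is to show $\mathfrak{B}$ is empty and then to upgrade $f$ to an embedding of $\mathbb{H}^2$. For the first, I would use that near a branch point the induced metric of a branched minimal immersion has the classical local form $e^{2\phi}\lvert z\rvert^{2m}\lvert dz\rvert^2$ with $\phi$ bounded and $m\geq 1$ an integer; the curvature identity $K_g=-1$ reads $\Delta\bigl(\phi+m\log\lvert z\rvert\bigr)=e^{2\phi}\lvert z\rvert^{2m}$, and comparing atomic parts at the branch point, the left side carries a term $2\pi m\,\delta_0$ while the right side is a bounded function, forcing $m=0$, a contradiction. (An alternative I would mention: on a punctured disk $f$ is a conformal harmonic map into a piece of a totally geodesic $\mathbb{H}^2\subset X$, hence $\pm$holomorphic and removable across the puncture, and a nonconstant holomorphic map with vanishing derivative is ramified there, contradicting Theorem~\ref{thm: nobranch}.) With $\mathfrak{B}$ empty, $(\Sigma,g)$ is a closed hyperbolic surface, so the lift $\widetilde{\Sigma}\rightarrow X$ is a $\rho$-equivariant totally geodesic isometric immersion of $\mathbb{H}^2$; as in the first paragraph, a totally geodesic immersion of $\mathbb{H}^2$ into the $\text{CAT}(-1)$ space $X$ carries each geodesic isometrically onto the unique geodesic joining its endpoints, hence is an isometric embedding. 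This produces the desired $\Gamma$-invariant totally geodesic embedding $\mathbb{H}^2\rightarrow X$ and finishes the argument.
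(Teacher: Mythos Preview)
Your overall architecture matches the paper's, and both the ``$\Leftarrow$'' direction (where you use orbit counting while the paper extends $f$ to a bi-Lipschitz boundary map) and the final upgrade to an embedding are fine. The gap is in your primary argument for ruling out branch points. Writing $u=\phi+m\log|z|$ with $\phi$ smooth, the identity $K_g=-1$ is only known \emph{pointwise on the punctured disk}; there it reads $\Delta\phi=e^{2\phi}|z|^{2m}$, since $\log|z|$ is harmonic away from $0$. Both sides are smooth on the full disk, so this extends across the origin with no obstruction and carries no information about $m$. If instead you pass to distributions on the full disk, then $\Delta u=\Delta\phi+2\pi m\,\delta_0=e^{2\phi}|z|^{2m}+2\pi m\,\delta_0$, i.e.\ the curvature \emph{measure} is $-dV_g-2\pi m\,\delta_0$: this is exactly the Gauss--Bonnet identity for a hyperbolic cone point of angle $2\pi(m+1)$, and is perfectly consistent for every $m\geq 0$. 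So ``comparing atomic parts'' yields no contradiction; constant curvature $-1$ off a point does not force the cone angle to be $2\pi$.

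Your alternative route does work, and is precisely what the paper records in the remark following the corollary: since $B\equiv 0$, near a branch point the image lies in a single $2$-dimensional totally geodesic leaf of curvature $-1$, the restriction of $f$ is conformal harmonic between surfaces and hence $\pm$holomorphic, and a nonconstant holomorphic map with vanishing differential is ramified there, contradicting Theorem~\ref{thm: nobranch}. The paper's \emph{main} argument is different and worth knowing: the data give $\Sigma$ a branched hyperbolic structure with holonomy $\widetilde\rho:\pi_1(\Sigma)\to\text{Isom}(\mathbb{H}^2)$ satisfying $\iota\circ\widetilde\rho=\rho$, and Goldman's theorem says the monodromy of a genuinely branched hyperbolic structure on a closed surface is never both discrete and faithful, contradicting the hypothesis on $\rho$. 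Either of these global arguments is needed; the local curvature computation alone is not enough.
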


\begin{proof}
If there exists a $\rho$-equivariant totally geodesic embedding,
\begin{align}
f:\mathbb{H}^2\rightarrow X,
\end{align}
then $f$ extends to a bi-lipschitz map,
\begin{align}
\overline{f}:\partial_{\infty}(\mathbb{H}^2)\rightarrow \Lambda_{\Gamma},
\end{align}
equipped with their natural (bi-Lipschitz equivalence classes of) Gromov metrics.  Since the Hausdorff dimension of $\partial_{\infty}(\mathbb{H}^2)$ in any of these metrics is $1$ and Hausdorff dimension is a bi-Lipschitz invariant, 
\begin{align}
\text{\textnormal{H.dim}}(\Lambda_{\Gamma})=1.
\end{align}
In the other direction, assume $\text{\textnormal{H.dim}}(\Lambda_{\Gamma})=1.$  Then Theorem \ref{thm: gendimbound} implies the estimate,
\begin{align}
\frac{1}{\text{Vol(g)}}\int_{\hat{\Sigma}}\sqrt{-\text{Sec}(\partial_1,\partial_2)+\frac{1}{2}\lVert B\rVert_{g}^2}\ dV_{g}\leq \text{\textnormal{H}}.\text{\textnormal{dim}}(\Lambda_{\Gamma})=1,
\end{align}
where $(g,B)$ are the induced metric and second fundamental form of any $\pi_1$-injective, branched minimal immersion,
\begin{align}
f:\Sigma \rightarrow X/\Gamma.
\end{align}
We claim that $f$ is actually an immersion.  Suppose to the contrary that $f$ has branch points.

Since $X$ is CAT$(-1),\ \text{\textnormal{Sec}}(\partial_1,\partial_2)\leq -1.$  The only possibility is $\text{\textnormal{Sec}}(\partial_1,\partial_2)=-1$ and $\lVert B \rVert_g=0.$  Then, the Gauss equation becomes,
\begin{align}
K_g= \text{\textnormal{Sec}}(\partial_1,\partial_2)-\frac{1}{2}\lVert B\rVert_g^2=-1,
\end{align}
which is valid away from the branching locus of $f.$
Hence, there is an isometry $\widetilde{f}:\widetilde{\Sigma}\rightarrow \mathbb{H}^2$ which is equivariant for a representation $\widetilde{\rho}:\pi_1(\Sigma)\rightarrow \text{Isom}(\mathbb{H}^2).$  The representation $\widetilde{\rho}$ is the monodromy of a branched hyperbolic structure, thus, by a theorem of Goldman \cite{GOL80} the representation $\widetilde{\rho}$ is not simultaneously discrete and faithful; otherwise it would be the monodromy of an unbranched hyperbolic structure.  But, $\iota\circ \widetilde{\rho}=\rho$ where $\iota$ is the inclusion of $\rho(\pi_1(\Sigma))$ into $\text{Isom}(X).$  This contradicts the fact that $\rho$ is discrete and faithful, hence $f$ has no branch points and it is an immersion.  

Next, pick $p\in\widetilde{\Sigma}$ and consider the $2$-plane $P\subset T_{f(p)}X$ tangent to $f(\widetilde{\Sigma})$ at $f(p),$ where here we use the same name for the lifted map,
\begin{align}
f:\widetilde{\Sigma}\rightarrow X.
\end{align}
Since $X$ has negative sectional curvature and $f$ is totally geodesic, the exponential map from $f(p)$ in the directions spanned by $P$ gives a diffeomorphism between $P$ and $f(\widetilde{\Sigma})$ which proves that $f$ is actually an embedding.  Thus, $f:\widetilde{\Sigma}\rightarrow X$ is a $\rho$-equivariant, totally geodesic embedding of the hyperbolic plane into $X$.  This completes the proof.  
\end{proof}
We close the paper with a series of remarks about the results we have obtained.

\textbf{Remark:}  We could also use the fact that our totally geodesic branched immersion is unramified to prove that it is actually an embedding.  In terms of the proof we give, this follows readily from the fact that the map uniformizing a branched hyperbolic structure on $\widetilde{\Sigma}$ is ramified at the branch points.

\textbf{Remark:} We emphasize that these results give a new proof of rigidity of Hausdorff dimension for convex-cocompact closed surface subgroups of rank-$1$ Lie groups of non-compact type.  This includes quasi-Fuchsian groups in real hyperbolic space $\mathbb{H}^n$ and complex quasi-Fuchsian groups in complex hyperbolic space $\mathbb{CH}^n.$  In addition, the lower bounds we obtain give a geometric explanation for why the Hausdorff dimension of the limit set grows as the lattice of orbits becomes more geometrically distorted in $X.$

\textbf{Remark:}  The applications of the techniques here have not been extinguished: given a discrete, faithful surface group representation into the isometry group of some manifold $X$ for which the associated energy functional on Teichmuller space is proper, one obtains an equivariant unramified branched minimal surface.  For example, if $X=G/K$ is a higher rank symmetric space, the situation is more complicated due to the existence of flats, and we would no longer make a statement about the Hausdorff dimension of the limit set, but rather about the growth of orbits directly.  In any case, there are a wealth of examples (Hitchin representations, maximal representations) of this type due to the work of Lauborie \cite{LAB08}; we plan to study these problems in a future paper.

\textbf{Remark:} Lastly, it is interesting to note that the Hausdorff dimension of the limit set controls the average norm of the second fundamental form of any $\pi_1$-injective, negatively curved surface in the quotient manifold.  It seems likely that this fact can be exploited in other circumstances than those investigated here.

\section{Appendix}

\subsection{Gradient estimate at scale epsilon}
In the proof of the quantitative Bowen rigidity Theorem \ref{quantrigid}, we promised the following proposition.  We refer back to the proof for the notation.
\begin{proposition}
Fix an $\varepsilon>0$ and assume there exists $p\in F_{2\varepsilon}.$  Then there exists $R=R(\varepsilon)>0$ such that $B_{g}(p,R)\subset F_{\varepsilon}.$
\end{proposition}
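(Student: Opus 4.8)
The plan is to use a gradient estimate for $\lVert B\rVert_g^2$ that follows from the $C^0$-bound \eqref{C1Bestimate} together with the standard elliptic theory for the second fundamental form of a minimal surface. First I would recall that for a minimal immersion into a space of curvature bounded between two constants, the function $\lVert B\rVert_g^2$ satisfies a Simons-type equation of the form $\Delta_g \lVert B\rVert_g^2 = 2\lVert \nabla B\rVert_g^2 + (\text{lower order terms in } \lVert B\rVert_g^2)$; combined with the $C^0$-bound $\lVert B\rVert_{C^0}^2\le C_1$ and the resulting bound $\lvert K_g\rvert\le C_2$, this gives uniform control on the geometry at a definite scale. More precisely, I would invoke interior Schauder estimates on balls of a fixed radius (using the injectivity radius lower bound $\varepsilon_0$ and the curvature bound to get harmonic-type coordinates in which the metric is uniformly controlled in $C^{1,\alpha}$) to conclude that there is a uniform constant $L=L(C_1,\varepsilon_0)$ with
\begin{align}
\lVert \nabla \lVert B\rVert_g^2\rVert_{C^0}\le L.
\end{align}

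Given this Lipschitz bound, the proposition is immediate: if $p\in F_{2\varepsilon}$, so that $\lVert B\rVert_g^2(p) > 2\varepsilon$, then for any $q$ with $d_g(p,q) < R := \varepsilon/L$ we have
\begin{align}
\lVert B\rVert_g^2(q) \ge \lVert B\rVert_g^2(p) - L\, d_g(p,q) > 2\varepsilon - L\cdot\frac{\varepsilon}{L} = \varepsilon,
\end{align}
so $q\in F_\varepsilon$, i.e. $B_g(p,R)\subset F_\varepsilon$. Note that $R$ depends only on $\varepsilon$ (and the fixed constants $C_1$ and $\varepsilon_0$), as asserted.

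The main obstacle is establishing the uniform gradient bound $L$ independent of the particular germ $(g,B)$. The subtlety is that a priori we only control $\lVert B\rVert_{C^0}$, not $\lVert \nabla B\rVert_{C^0}$, and naively differentiating the Simons equation requires controlling one more derivative of the ambient and intrinsic geometry. The clean way around this is a Schoen-type $\varepsilon$-regularity / scaling argument: around any point, rescale the induced metric so the ball of radius $\varepsilon_0$ becomes unit size, apply elliptic estimates to the (now uniformly bounded) data on the rescaled surface to bound $\lVert\nabla B\rVert$ there, and scale back; the injectivity radius lower bound is exactly what makes this rescaling uniform across all $(g,B)$ under consideration. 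Since this is the same circle of ideas as the Schoen estimate already cited for \eqref{C1Bestimate}, I expect it to go through without essential difficulty, though care is needed to track that every constant depends only on $C_1$ and $\varepsilon_0$.
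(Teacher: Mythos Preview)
Your approach is correct but takes a genuinely different route from the paper. You go for a direct uniform Lipschitz bound on $\lVert B\rVert_g^2$ via the Simons identity and interior Schauder estimates on balls of scale $\varepsilon_0$, then set $R=\varepsilon/L$. The paper instead argues by compactness: writing $g=e^{2u}h$ with $h$ the hyperbolic metric in the conformal class, the Schoen $C^0$-bound and the Gauss equation give uniform two-sided bounds on $u$, hence a uniform bound on $\lVert\alpha\rVert_h$ where $\alpha$ is the holomorphic quadratic differential with $\mathrm{Re}(\alpha)=B$; combined with the injectivity radius lower bound and Mumford compactness, the pairs $(h,\alpha)$ vary over a compact set, and the paper simply defines $R$ as the infimum over this compact family of the radius that works at each $(g,B,p)$, concluding $R>0$ because the infimum is attained.

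Your argument is more quantitative and self-contained---it produces an explicit $R$ and avoids invoking Mumford compactness---at the cost of having to justify the elliptic bootstrap carefully (which, as you note, is the same circle of ideas as the Schoen estimate already in play). The paper's argument is softer but exploits the special structure available here: $B$ is the real part of a holomorphic object. In fact that structure could shortcut your route as well: once $\lVert\alpha\rVert_h$ is uniformly bounded and the injectivity radius of $h$ is bounded below, Cauchy estimates on disks of fixed radius give the gradient bound on $\alpha$ (hence on $\lVert B\rVert_g^2$) directly, bypassing the Simons equation and Schauder theory entirely.
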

\begin{proof}
First, recall that by the theorem of Scheon \cite{SCH83}, there exists $C_1>0$ such that,
\begin{align}\label{schoenbound}
\lVert B\rVert_{g}^2<C_1.
\end{align}
Then the Gauss equation,
\begin{align}
K_{g}=-1-\frac{1}{2}\lVert B\rVert_{g}^2,
\end{align}
implies that there exists $C_2>0$ such that $-C_{2}< K_{g}  \leq -1.$  Writing $g$ as a conformal deformation of the hyperbolic metric $h$ in the same conformal class $g=e^{2u}h,$ we may express the Gauss equation with respect to $h$ via,
\begin{align} \label{gaussconformal}
K_{g}=-e^{2u}(\Delta_{h}u+1).
\end{align}
Using the uniform bounds on $K_g$ and applying the maximum and minimum principle to \eqref{gaussconformal} implies there exists $C_3>0$ and the following uniform bound,
\begin{align}
-C_3<u\leq 0.
\end{align}
Hence, the injectivity radius of $(\Sigma, h)$ is at least that of $(\Sigma, g).$  Recall that there exists a holomorphic quadratic differential $\alpha$ such that $\frac{1}{2}\lVert B\rVert_{g}^2=\lVert \alpha \rVert_{g}^2.$  The uniform bound on the conformal factor $u$ in tandem with \eqref{schoenbound} implies that there exists a uniform bound on $\lVert \alpha \rVert_{h}^2.$  Hence, over the hyperbolic surface $(\Sigma, h),$ the set of holomorphic quadratic differentials whose real part appears as the second fundamental form $B$ of a stable, immersed minimal surface is compact.  Since the injectivity radius of $(\Sigma, h)$ is uniformly bounded below, the Mumford compactness theorem implies that the metrics $h$ live in a compact set in the moduli space of all hyperbolic metrics on $\Sigma.$  Hence, we may define,
\begin{align} \label{Rdef}
R:=\min_{(g,B)} \{R>0 \ |\ \lVert B \rVert_{g}(x)>\varepsilon \ \text{for all}\ x\in B_{g}(p,R)\}.
\end{align}
Here, the minimum is taken over all $(g,B)$ first and second fundamental forms of stable, immersed minimal surfaces in quasi-Fuchsian 3-manifolds such that the injectivity radius of $g$ has a uniform lower bound, and points $p\in \Sigma$ such that $p\in F_{2\varepsilon},$ namely that $\lVert B\rVert_{g}(p)> 2\varepsilon.$
  Certainly, at any pair $(g,B)$ which is the first and second fundamental form of a stable, immersed minimal surface in a quasi-Fuchsian $3$-manifold,  there exists such an $R>0$ simply by continuity.  By our previous discussion, such $(g,B)$ vary over a compact set, hence we conclude that the minimum in \eqref{Rdef} is attained and hence $R>0.$  This completes the proof.
\end{proof}

\subsection{Gulliver-Tomi theorem}

In this section of the appendix we provide a simplified proof of the Gulliver-Tomi theorem.  Let $\Sigma$ be a closed, oriented surface of genus greater than one and $M$ any $n$-dimensional manifold for $n\geq 2.$  
\begin{definition}\label{def: branch}
A $C^1$-mapping $f:\Sigma\rightarrow M$ is a branched immersion if there exists a finite set of points $p_1,...p_k\in \Sigma$ such that $f|_{\Sigma\backslash \{p_1,...,p_k\}}$ is an immersion.  Furthermore, for each $i$ there exists positive integers $q_i,$ open sets $U_i\subset \Sigma, V_i\subset M$ containing $p_i, f(p_i)$ respectively, and coordinate charts $\phi^i:U_i \rightarrow \mathbb{C}$ and $\eta^i: V_i\rightarrow \mathbb{R}^n$ such that in these coordinates:
\begin{align}
f^1(x+iy)&=\text{\textnormal{Re}}((x+iy)^{q_i})+o(\lvert x+iy\rvert^{q_i}), \\
f^2(x+iy)&=\text{\textnormal{Im}}((x+iy)^{q_i})+o(\lvert x+iy\rvert^{q_i}), 
\end{align}
\begin{center}
$f^j=o(\lvert x+iy \rvert^{q_i}), \ \frac{\partial f^j}{\partial x}=o(\lvert x+iy\rvert^{{q_i}-1}), \ \frac{\partial f^j}{\partial y}=o(\lvert x+iy\rvert^{{q_i}-1}), \ 3\leq j\leq n.$
\end{center}
The points $p_i$ are the branch points of the immersion $f.$  Each number $q_i -1$ is the order, or index, of the branch point $p_i.$
\end{definition}
Examples of branched immersions include holomorphic maps between Riemann surfaces, or more generally the critical points of energy functionals which we consider in this paper, which are minimal surfaces on the complement of the branch points.

Let $f:\Sigma\rightarrow M$ be a branched immersion and let $p\in\Sigma$ be a branch point.  Then on every neighborhood of $p,$ the mapping $f$ is $\ell+1$ to one for some non-negative integer $\ell.$   The number $\ell$ is the \textit{order of ramification} of the branch point $p.$  If none of the branch points of $f$ are ramified, we say that $f$ is an unramified branched immersion.

A remarkable theory of branched immersions has been developed by many mathematicians, most notably Gulliver, Osserman and Royden \cite{GOR73}.  One of the key elements is a factorization theorem for branched immersions with the unique continuation property.  Firstly recall that $f$ has the unique continuation property is $f$ is uniquely determined by it's value on any open set $U\subset \Sigma.$   

Define an equivalence relation on non-branch points of $\Sigma$ as follows: $x\sim y$ for $x,y\in \Sigma$ if and only if there exists open sets $U,V\subset \Sigma$ containing $x$ and $y$ respectively and an orientation preserving homeomorphism $h: U\rightarrow V$ such that $f|_{U}=f\circ h.$  The following theorem is proved by Gulliver, Osserman and Royden \cite{GOR73}.
\begin{theorem}\label{thm: gullform}
Let $f:\Sigma\rightarrow M$ be a branched immersion with the unique continuation property.  The quotient, 
\begin{align}
\Sigma^{\sim}:=\Sigma/\sim
\end{align}
is a closed, oriented surface, the quotient map $\pi:\Sigma\rightarrow \Sigma^{\sim}$ is a branched covering, and there exists a unique $f^{\sim}:\Sigma^{\sim}\rightarrow M$ such that $f=f^{\sim}\circ \pi.$  Furthermore, the branch points  of $\pi$ coincide with those of $f$ and the order of ramification at each branch point is also equal.
\end{theorem}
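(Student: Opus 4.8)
The plan is to manufacture $\Sigma^{\sim}$ and $\pi$ from local models, check that these glue into a surface, and then push $f$ down. Write $\mathfrak{B}=\{p_1,\dots,p_k\}$ for the branch locus. First I would verify that $\sim$ is an equivalence relation: reflexivity uses the identity, symmetry uses the inverse of an orientation-preserving homeomorphism, and transitivity comes from composing germs, $f\circ(h_2\circ h_1)=f\circ h_1=f$. The key preliminary point is that on $\Sigma\setminus\mathfrak{B}$, where $f$ is an honest immersion, hence locally an embedding onto a submanifold germ, any homeomorphism $h$ witnessing $x\sim y$ is forced to equal $(f|_V)^{-1}\circ (f|_U)$ on small enough charts; so $x\sim y$ off $\mathfrak{B}$ simply records that the submanifold germs $f(U)$, $f(V)$ agree and the induced local inverses are orientation-compatible, and the witnessing germ is unique. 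The unique continuation hypothesis is what guarantees that these germs propagate coherently, and it becomes essential at the branch points below.

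Next I would study $\pi\colon\Sigma\setminus\mathfrak{B}\to(\Sigma\setminus\mathfrak{B})/\!\sim$. Openness of $\pi$ is formal: if $h$ witnesses $x\sim y$ on $U$, then every $x'\in U$ is $\sim h(x')$, so saturations of open sets are open. And $\pi$ is a local homeomorphism: on a disk $D$ with $f|_D$ injective, $x_1\sim x_2$ with $x_1,x_2\in D$ forces $f(x_1)=f(x_2)$, hence $x_1=x_2$, so $\pi|_D$ is injective and, being open, a homeomorphism onto an open set. The delicate step — and the one I expect to be the main obstacle — is Hausdorffness of the quotient. If $\pi(x)\neq\pi(y)$ admitted no separating neighborhoods, there would be $x_n\to x$, $y_n\to y$ with $x_n\sim y_n$ via homeomorphisms $h_n=(f|_{V_n})^{-1}\circ(f|_{U_n})$, and I would need to extract a limit homeomorphism $h$ with $h(x)=y$ and $f\circ h=f$, contradicting $\pi(x)\neq\pi(y)$. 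This requires uniform control on the domains $U_n$ and on $\|dh_n\|$, which one obtains from compactness of $\Sigma$ together with $x,y\notin\mathfrak{B}$ (so $df$ is bounded above and below near $x$ and $y$); an Arzel\`{a}--Ascoli argument produces the limit $h$, and one must check that a limit of orientation-preserving local homeomorphisms with these uniform bounds is again an orientation-preserving homeomorphism — here $df\cdot dh=df$ forces $dh=\mathrm{id}$ at $x$, so $h$ is a local diffeomorphism equal to $(f|_V)^{-1}\circ(f|_U)$, whence $x\sim y$.

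Then I would treat the branch points. Using the normal form of Definition~\ref{def: branch}, on a small coordinate disk $D_\varepsilon$ about $p_i$ the map $f$ is, to leading order, $z\mapsto z^{q_i}$ in the first two coordinates with $o(|z|^{q_i})$ errors in all coordinates; the local structure theory of branched immersions (Gulliver--Osserman--Royden) shows that for $\varepsilon$ small $f|_{D_\varepsilon^{*}}$ is a covering onto its image whose deck group is cyclic of order $d_i=\ell_i+1$, generated by a homeomorphism close to $z\mapsto e^{2\pi i/d_i}z$, where $\ell_i$ is the order of ramification of $p_i$. Unique continuation is precisely what forces $\sim$ on $D_\varepsilon$ to be exactly this deck-group action — no accidental identifications, no extra symmetry. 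Consequently the quotient near $p_i$ is again a disk and $\pi$ is topologically $w\mapsto w^{d_i}$, a branched covering with a branch point of order $\ell_i$ at $p_i$ (and with no branch point there when $\ell_i=0$). Assembling: $\Sigma^{\sim}=\Sigma/\!\sim$ is covered by disk charts — images of the local homeomorphisms off $\mathfrak{B}$ and of the local branched covers at $\mathfrak{B}$ — it is Hausdorff (the previous argument extends to pairs involving branch points), compact as a continuous image of $\Sigma$, connected, and oriented since every transition is orientation-preserving; so $\Sigma^{\sim}$ is a closed oriented surface, and $\pi$ is proper with finite fibers, hence a branched covering whose branch points are exactly the ramified branch points of $f$ with the same ramification orders. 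Finally $x\sim y\Rightarrow f(x)=f(y)$ — directly off $\mathfrak{B}$, and by continuity on $\mathfrak{B}$ — so $f$ descends to a continuous map $f^{\sim}\colon\Sigma^{\sim}\to M$ with $f=f^{\sim}\circ\pi$, unique because $\pi$ is surjective. This completes the plan.
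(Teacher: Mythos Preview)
The paper does not prove this theorem. It is stated there as a result of Gulliver--Osserman--Royden \cite{GOR73} and invoked as a black box in the appendix to deduce the Gulliver--Tomi theorem on absence of ramified branch points. So there is no ``paper's own proof'' to compare against; your proposal is an attempt to reconstruct an argument that the paper deliberately outsources to the literature.

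As a sketch of the [GOR73] argument your outline is broadly on the right track: one does build $\Sigma^{\sim}$ by showing $\pi$ is a local homeomorphism off $\mathfrak{B}$, establishing Hausdorffness by a compactness/limiting argument, and inserting the cyclic local model $z\mapsto z^{d_i}$ at each branch point via the normal form. Two points deserve more care. First, the equivalence relation in the paper (and in \cite{GOR73}) is defined only on non-branch points; you need to say precisely how $\sim$ is extended over $\mathfrak{B}$ --- typically one takes the quotient of $\Sigma\setminus\mathfrak{B}$ first and then compactifies by adjoining one point per branch-point orbit, rather than quotienting $\Sigma$ directly. Second, your Hausdorffness step asserts that a $C^0$-limit of the witnessing homeomorphisms $h_n$ is again an orientation-preserving homeomorphism because ``$df\cdot dh=df$ forces $dh=\mathrm{id}$''; but you only have $C^0$-convergence from Arzel\`a--Ascoli, so you cannot speak of $dh$ without first upgrading regularity. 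The actual argument in \cite{GOR73} uses that each $h_n$ is the composition of two local inverses of $f$, hence uniformly controlled in $C^1$, and one extracts a $C^1$-limit; alternatively one argues topologically with the image germs. Finally, note a small tension with the theorem as stated: the paper says ``the branch points of $\pi$ coincide with those of $f$,'' whereas your construction correctly produces a branch point of $\pi$ only when $\ell_i>0$; unramified branch points of $f$ become regular points of $\pi$. This is a harmless imprecision in the paper's phrasing, not an error in your outline.
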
 
\textbf{Remark:}  If $f:\Sigma\rightarrow \Sigma'$ is a branched covering, then $\Sigma^{\sim}=\Sigma',\ f=\pi,$ and $f^{\sim}=\text{Id.}$

Now, the theorem we wish to prove follows quite rapidly from the Riemann-Hurwitz formula:
\begin{theorem}
Let $f:\Sigma\rightarrow M$ be a branched immersion with the unique continuation property such that $f_{*}:\pi_1(\Sigma)\rightarrow \pi_1(M)$ is an isomorphism.  Then $f:\Sigma\rightarrow M$ has no ramified branch points, thus $f$ is an unramified branched immersion.
\end{theorem}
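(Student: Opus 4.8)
The plan is to combine the factorization result (Theorem \ref{thm: gullform}) with the Riemann-Hurwitz formula and the hypothesis that $f_*$ is an isomorphism. First I would invoke Theorem \ref{thm: gullform} to factor $f = f^{\sim}\circ\pi$, where $\pi:\Sigma\rightarrow\Sigma^{\sim}$ is a branched covering of closed oriented surfaces whose branch points and ramification orders coincide with those of $f$. The strategy is to show that $\pi$ is in fact an unramified covering; since the ramification data of $f$ and $\pi$ agree, this immediately forces $f$ to be an unramified branched immersion.

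The key step is a degree and genus count. Let $d$ be the degree of the branched covering $\pi$. On fundamental groups, $\pi_*:\pi_1(\Sigma)\rightarrow\pi_1(\Sigma^{\sim})$ has image of index equal to $d$ (for a connected branched covering of surfaces, the unbranched part is an honest $d$-fold covering, and the induced map on $\pi_1$ has image of index $d$). Now $f_* = f^{\sim}_*\circ\pi_*$ is an isomorphism onto $\pi_1(M)$ by hypothesis; in particular $\pi_*$ is injective. Combined with the fact that its image has finite index $d$ in $\pi_1(\Sigma^{\sim})$, and that subgroups of surface groups of index $d$ correspond under Euler characteristic to $\chi(\Sigma) = d\cdot\chi(\Sigma^{\sim})$ --- while Riemann-Hurwitz gives
\begin{align}
\chi(\Sigma) = d\cdot\chi(\Sigma^{\sim}) - \sum_{i}(q_i - 1),
\end{align}
where the sum runs over branch points with ramification indices $q_i - 1 \geq 0$ --- I would argue that the total ramification $\sum_i (q_i - 1)$ must vanish.

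To close this I would pin down the relationship between $\chi(\Sigma)$ and $\chi(\Sigma^{\sim})$ using the fact that $f_*$ is an \emph{isomorphism} (not merely injective). Since $f^{\sim}_*\circ\pi_*$ is onto $\pi_1(M)$, and $\pi_*$ has image of index $d$ in $\pi_1(\Sigma^{\sim})$, surjectivity of the composite forces $f^{\sim}_*$ to be surjective from a subgroup structure viewpoint; more directly, because $f^{\sim}$ itself is a branched immersion of a closed surface, a parallel analysis (or the observation that $f^{\sim}$ can have no branch points where $\pi$ already absorbs all ramification, together with $\pi_1$-surjectivity of $f^{\sim}$) shows $f^{\sim}_*$ is an isomorphism and $\pi_*$ must therefore be an isomorphism, i.e. $d = 1$. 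Alternatively, one observes $\pi_1(\Sigma)$ and $\pi_1(M)\cong\pi_1(\Sigma)$ both inject with finite index into $\pi_1(\Sigma^{\sim})$ via $\pi_*$ and $(f^{\sim}_*)^{-1}\!$-type considerations, and since surface groups are Hopfian and cannot contain themselves as proper finite-index subgroups with the same Euler characteristic unless the index is $1$, we get $d=1$. Feeding $d=1$ into Riemann-Hurwitz gives $\chi(\Sigma) = \chi(\Sigma^{\sim}) - \sum_i(q_i-1)$, and since $\pi$ is then a degree-one branched covering --- hence a homeomorphism off the branch locus --- a degree-one branched cover of closed surfaces is forced to have no branching at all, so every $q_i = 1$. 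Therefore $f$ has no ramified branch points.

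The main obstacle I expect is the bookkeeping around the degree of $\pi$ and the precise statement that a degree-$d$ branched covering of surfaces induces a $\pi_1$-image of index exactly $d$, together with ensuring the Euler characteristic comparison rules out $d > 1$; the subtlety is that a priori $\Sigma^{\sim}$ could have smaller genus, so one must use that $\pi_1(\Sigma)\hookrightarrow\pi_1(\Sigma^{\sim})$ as a finite-index subgroup \emph{and} that this subgroup is the kernel-free image of an isomorphism onto $\pi_1(M)$, which by the classification of surface groups (or a direct Euler characteristic inequality $|\chi(\Sigma)| \geq |\chi(\Sigma^{\sim})|$ contradicting $\chi(\Sigma) = d\,\chi(\Sigma^{\sim}) - (\text{ramification})$ when $d \geq 2$) pins $d = 1$. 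Once $d=1$ the geometric conclusion is immediate.
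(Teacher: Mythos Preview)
Your strategy---factor $f=f^{\sim}\circ\pi$ via Theorem \ref{thm: gullform}, then combine Riemann--Hurwitz with surface-group facts---is exactly the paper's strategy. The difference is in execution: you aim to prove $d=1$ first and then read off the vanishing of ramification, whereas the paper never isolates the degree. Instead it simply uses $N\geq 1$ together with $\chi(\Sigma^{\sim})<0$ to turn Riemann--Hurwitz into the inequality
\[
\chi(\Sigma)=N\chi(\Sigma^{\sim})-\mathrm{Deg}(D_f)\leq \chi(\Sigma^{\sim})-\mathrm{Deg}(D_f),
\]
giving $0\leq \mathrm{Deg}(D_f)\leq 2(g-\tilde g)$. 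Then surjectivity of $f^{\sim}_*$ onto $\pi_1(M)\cong\pi_1(\Sigma)$ forces $\tilde g\geq g$ (via minimal generating sets), and the two inequalities squeeze $\mathrm{Deg}(D_f)=0$. The conclusion $N=1$ falls out afterwards, but is never used as an input. This is shorter and avoids the bookkeeping you flag as the main obstacle.

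One concrete issue in your write-up: the assertion that for a degree-$d$ branched covering the image of $\pi_*$ has index \emph{exactly} $d$ is not correct in general---passing from the punctured to the closed surface can shrink the index (think of $z\mapsto z^2$ on $S^2$). What is true, and sufficient, is that the image has \emph{finite} index, since it is the image of a finite-index subgroup of $\pi_1(\Sigma^{\sim}\setminus B)$ under the surjection to $\pi_1(\Sigma^{\sim})$. Once you know the index $k$ is finite you get $\chi(\Sigma)=k\,\chi(\Sigma^{\sim})$ from covering space theory, and comparing this with Riemann--Hurwitz plus $\tilde g\geq g$ does pin down the ramification---but at that point you have essentially reproduced the paper's argument with an extra step. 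Your Hopfian/``cannot contain themselves'' remark is in the right spirit but not needed once the inequalities are set up cleanly.
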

\begin{proof}
Let ${p_i}\subset \Sigma$ be the branch points of $f$ with order of ramification $\ell_{i} -1.$  Form the ramification divisor 
\begin{align}
D_{f}=\sum (\ell_{i}-1)[p_i].
\end{align}
Then $\text{Deg}(D_{f})=\sum (\ell_i-1)$ and $f$ is unramified if and only if $\text{Deg}(D_{f})=0.$  If $\pi:\Sigma\rightarrow \Sigma^{\sim}$ is the branched covering provided by Theorem \ref{thm: gullform}, then the Riemann-Hurwitz formula implies there exists $N>0$ such that
\begin{align}
\chi(\Sigma)=N\chi(\Sigma^{\sim})-\text{\textnormal{Deg}}(D_{f}).
\end{align}
Since $f$ is an isomorphism on fundamental group and $f=f^{\sim}\circ \pi,$ it follows that $f^{\sim}_{*}:\pi_1(\Sigma^{\sim})\rightarrow \pi_1(M)$ is surjective and $\pi_*:\pi_1(\Sigma)\rightarrow \pi_1(\Sigma^{\sim})$ is injective.  The injectivity of $\pi_{*}$ implies that $\chi(\Sigma^{\sim})<0,$ since a closed surface group of genus greater than one can not surject onto the trivial group or onto $\mathbb{Z}\oplus \mathbb{Z}.$  Thus, the Riemann-Hurwitz formula implies that 
\begin{align}
2-2g&=N(2-2\widetilde{g})-\text{\textnormal{Deg}}(D_{f}) \\
&\leq (2-2\widetilde{g})-\text{\textnormal{Deg}}(D_{f}),
\end{align}
where $\widetilde{g}$ is the genus of $\Sigma^{\sim}.$
This inequality implies that
\begin{align}\label{eqn: pos}
0\leq \text{\textnormal{Deg}}(D_{f})\leq 2(g-\widetilde{g}).
\end{align}
Lastly, since $\pi_1(\Sigma^{\sim})$ surjects onto $\pi_1(M)\simeq \pi_1(\Sigma),$ the minimal cardinality of a generating set of $\pi_1(\Sigma^{\sim}),$ which is $2\widetilde{g},$ is at least as large as the minimal cardinality of a generating set of $\pi_1(\Sigma),$ which is $2g.$ Thus,  
\begin{align}
2\widetilde{g}\geq 2g.
\end{align}
Combining this with \eqref{eqn: pos} implies that $g=\widetilde{g}.$  Hence, \eqref{eqn: pos} implies that $\text{Deg}(D_{f})=0,$ which, as stated previously, implies that $f$ has no ramification points.  This proves the theorem.
\end{proof}

\subsection{Perturbing branched immersions}

In this section of the appendix, we give a complete proof of the claim contained in the proof of Theorem \ref{thm: gendimbound}.  We restate the result here as a proposition.

\begin{proposition}\label{pert branched}
 Let $(M,h)$ be a Riemannian manifold with sectional curvature at most $-1$ and of dimension at least $4,$ and suppose
\begin{align}
f:\Sigma\rightarrow M,
\end{align}
is a branched minimal immersion.  Then there exists $\varepsilon_0> 0$ and smooth maps,
\begin{align}
f_{\varepsilon}:\Sigma\rightarrow M,
\end{align}
for all $\varepsilon\in [0,\varepsilon_{0}]$ satisfying the following properties:
\begin{enumerate}
\item The maps $f_{\varepsilon}$ are immersions for $\varepsilon>0$ and $f_0=f.$  Denote the induced metrics by 
$f_{\varepsilon}^{*}h=g_{\varepsilon}.$  Also, let $f^{*}h=g$ be the induced metric via $f$ on the complement of the branch points $\mathcal{B}.$
\item \label{imm equal} The maps $f_{\varepsilon}$ satisfy $f_{\varepsilon}=f$ on $\Sigma(\varepsilon)$ where, 
\begin{align}
\Sigma(\varepsilon)=\Sigma\backslash \{ \cup B_{g}(p_i, 3\varepsilon) \},
\end{align}
with the union taken over the set of branch points $\mathcal{B}.$ 
\item Let $K_{\varepsilon}$ be the sectional curvature of the metric $g_{\varepsilon}$ and $K_g$ be the sectional curvature of the metric $g$ on the complement of the branch points $\mathcal{B}.$  Then 
$K_{\varepsilon}\rightarrow K_g$ pointwise on the complement of $\mathcal{B}$ in $\Sigma.$  Note that this formally follows from the previous property.
\item  There exists $\kappa(\varepsilon_0)>0$ such that $K_{\varepsilon}(p)<-\kappa(\varepsilon_0)$ for all $\varepsilon\in (0,\varepsilon_{0}]$ and for all $p\in\Sigma.$
\end{enumerate} 
\end{proposition}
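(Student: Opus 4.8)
The branch locus $\mathcal{B}=\{p_1,\dots,p_k\}$ is finite, so for $\varepsilon$ small the balls $B_g(p_i,3\varepsilon)$ are pairwise disjoint and the plan is to modify $f$ only inside each of them and leave $f$ untouched elsewhere; property $(2)$ then holds by fiat, and $(3)$ follows from $(2)$, since for fixed $p\notin\mathcal B$ one has $f_\varepsilon=f$ near $p$ once $\varepsilon$ is small. So everything reduces to a local construction near a single branch point $p=p_i$, of order $q-1$ say. In the coordinates of Definition~\ref{def: branch} (a complex coordinate $z=x_1+ix_2$ on $\Sigma$ centered at $p$, geodesic normal coordinates $y=(y_1,\dots,y_n)$ on $M$ centered at $f(p)$) the map has normal form $f(z)=(\operatorname{Re}(z^q),\operatorname{Im}(z^q),o(|z|^q),\dots,o(|z|^q))$, with leading part the holomorphic curve $z\mapsto(z^q,0,\dots,0)$, and $h$ differs from the Euclidean metric by $O(|y|^2)$. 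Because $\dim M=n\ge 4$ there are at least two coordinate directions $e_3,e_4$ transverse to the $(y_1,y_2)$-plane, which is the limit of the image tangent planes of $f$ as $z\to p$; I will treat $\operatorname{span}(e_3,e_4)$ as a complex line via $e_3\mapsto e_4$.

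I would then fix a smooth cutoff $\chi\colon[0,\infty)\to[0,1]$ with $\chi\equiv1$ on $[0,1]$, $\chi\equiv0$ on $[3,\infty)$, and set
\[
f_\varepsilon(z)=f(z)+\varepsilon^{q-1}\,\chi\!\left(\tfrac{|z|}{\varepsilon}\right)\bigl(x_1e_3+x_2e_4\bigr),
\]
read back into $M$ through the chart. This is smooth, equals $f$ off $B_g(p,3\varepsilon)$, fixes $f_\varepsilon(p)=f(p)$, and has $df_\varepsilon(p)=\varepsilon^{q-1}(e_3\,dx_1+e_4\,dx_2)$, of rank $2$. The exponent $\varepsilon^{q-1}$ is chosen so that under the isotropic blow-up $z=\varepsilon\zeta$, $y=\varepsilon^{q}y'$ the maps $f_\varepsilon$ converge --- in $C^\infty_{\mathrm{loc}}$ on $0<|\zeta|\le3$ and in $C^\infty$ across $|\zeta|=1$ and $|\zeta|=3$ --- to the fixed, $\varepsilon$-independent map $F(\zeta)=\bigl(\zeta^q,\chi(|\zeta|)(\zeta_1e_3+\zeta_2e_4)\bigr)$ into flat $\mathbb{R}^n$, while the rescaled ambient metrics $\varepsilon^{-2q}h(\varepsilon^q\cdot)$ converge to the Euclidean metric. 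One checks directly that $F$ is an immersion everywhere on $\mathbb{R}^2$ (its differential has rank $2$: the $e_3,e_4$-block carries it for $|\zeta|\le1$, the $\zeta^q$-block for $|\zeta|\ge1$). Since being an immersion is $C^1$-open, $f_\varepsilon$ is an immersion for all small $\varepsilon$; together with $f_\varepsilon=f$ on $\Sigma(\varepsilon)$, where $f$ is an immersion, this gives property $(1)$.

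For the Gauss curvature [property $(4)$] I would argue region by region. On $\Sigma(\varepsilon)$ one has $g_\varepsilon=g$ and hence $K_{g_\varepsilon}=K_g\le-1$ by the Gauss equation for the \emph{minimal} $f$. On the inner ball $B_g(p,\varepsilon)$ the perturbation is linear, so adding it to the holomorphic leading part keeps that part holomorphic, hence flat-minimal; consequently $f_\varepsilon$ there is $C^2$-close (in the blown-up scale) to a holomorphic model, its induced metric is, to leading order, conformal to $|dz|^2$, and since the second fundamental form of the minimal $f$ is trace-free with respect to every metric conformal to $g$, the mean-curvature term in the Gauss equation is negligible; thus $K_{g_\varepsilon}=\operatorname{Sec}_h-\tfrac12\|B_\varepsilon\|^2+o(1)\le-\tfrac12$ on $B_g(p,\varepsilon)$ (in particular $K_{g_\varepsilon}(p)\le-1$, using $d^2f_\varepsilon(p)=0$ when $q\ge3$). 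The one place where negativity is not automatic is the transition annulus $\varepsilon\le|z|\le3\varepsilon$, where the radial cutoff destroys holomorphicity: there the blown-up induced metric of $F$ is the rotationally symmetric metric $[\,q^2r^{2q-2}+((\chi r)')^2\,]dr^2+r^2[\,q^2r^{2q-2}+\chi^2\,]d\theta^2$ with $r=|\zeta|$, and writing it as $ds^2+\psi(s)^2d\theta^2$ one has $K=-\psi_{ss}/\psi$, so $K<0$ is exactly strict convexity of $\psi$ in arclength. The endpoint data is convexity-compatible (the arclength slope of $\psi$ is $(q^3+1)/(q^2+1)$ at $r=1$ and $q$ at $r=3$, and $(q^3+1)/(q^2+1)<q$ since $q>1$), so I would fix the profile $\chi$ once and for all, decreasing gently enough, so that $\psi$ is strictly convex on the annulus, forcing $K<0$ there. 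Finally, unraveling the blow-up multiplies curvatures by $\varepsilon^{-(2q-2)}\ge\varepsilon_0^{-(2q-2)}$, so the uniform negative bound on the model annulus, the bound $K_{g_\varepsilon}\le-\tfrac12$ near $p$, and $K_{g_\varepsilon}=K_g\le-1$ off $B_g(p,3\varepsilon)$ combine, through a routine compactness argument on the compact parameter set $[0,\varepsilon_0]$ that absorbs the lower-order ambient and higher-order corrections, into a single $\kappa(\varepsilon_0)>0$ with $K_{g_\varepsilon}\le-\kappa(\varepsilon_0)$ for all $\varepsilon\in(0,\varepsilon_0]$.

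The main obstacle is precisely this curvature bookkeeping in the transition annulus: one must verify that the perturbation can be switched off while keeping the induced metric negatively curved, and it is this requirement that pins down the convexity (gentleness) condition on the cutoff. A related point, which rules out any shortcut by trying to keep $f_\varepsilon$ minimal, is that $f_\varepsilon$ \emph{cannot} be taken minimal: a minimal immersion agreeing with $f$ to infinite order along $\partial B_g(p,3\varepsilon)$ would, by unique continuation for minimal surfaces, coincide with the branched $f$. This is why the argument proceeds through the explicit blown-up model $F$ rather than by perturbing off the minimal $f$. Everything else --- the immersion property, $(2)$, $(3)$, and the uniformity of $\kappa$ --- is a direct rescaling-and-compactness argument.
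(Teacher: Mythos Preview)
Your approach is genuinely different from the paper's and considerably more explicit. The paper takes a softer route: it uses a perturbation with a much smaller amplitude, $q(\varepsilon)=e^{-1/\varepsilon}$, chosen precisely so that $\lVert q(\varepsilon)\eta_\varepsilon\rVert_{C^3(\mathbb{R})}\to 0$ as $\varepsilon\to 0$; hence $f_\varepsilon\to f$ in $C^3$ and $g_\varepsilon\to g$ in $C^2$ on the punctured surface $\hat\Sigma$. The curvature bound (4) is then argued indirectly via the Taylor expansion of the volume of small geodesic balls: since $K_g\le -1$, small $g$-balls satisfy $\mathrm{Vol}_g(B_g(p,r))\ge \pi r^2+\beta r^4$, and $C^2$-closeness passes this to $g_\varepsilon$, whence the leading coefficient in the Taylor expansion forces $K_{g_\varepsilon}<-\kappa$. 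There is no blow-up, no explicit computation of a model metric on the transition annulus, and no convexity analysis.

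Your scaling $\varepsilon^{q-1}$ is natural from the blow-up viewpoint and gives a clean $\varepsilon$-independent model $F$; this buys you an immediate proof of the immersion property and a concrete object on which to compute curvature. The cost is that your perturbation is \emph{not} $C^2$-small (for $q=2$ the correction to $g_\varepsilon$ on the annulus is of order one), so you cannot simply say ``$K_{g_\varepsilon}$ is close to $K_g$'' and must instead control the curvature of the non-minimal model $F$ on the annulus directly. That is where your convexity argument enters, and it is the one place where your proposal leaves a genuine verification undone: you assert that $\chi$ can be chosen so that $\psi$ is strictly convex in arclength on $1\le r\le 3$, but only check the compatibility of the endpoint slopes. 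Note also that at $r=3$ the model curvature is exactly zero (the map is $\zeta\mapsto\zeta^q$, which is conformal onto its image), so strict convexity must degenerate at the outer edge; the uniform negativity of $K_{g_\varepsilon}$ there has to come from the ambient $\mathrm{Sec}_h\le -1$ term rather than from the model, and your ``routine compactness argument'' has to handle the interaction between the vanishing model curvature and the $O(1)$ ambient contribution carefully. The paper sidesteps all of this by making the perturbation decay faster than any power of $\varepsilon$, so that the extrinsic contribution is negligible and $K_{g_\varepsilon}$ inherits negativity directly from $K_g$.
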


\begin{proof}
Let $\varepsilon>0$ and select $p_i\in \mathfrak{B}.$  First we need a bump function:
\begin{align}
\eta_{\varepsilon}(\lvert t \rvert)=\left\{
     \begin{array}{lr}
     1 &: \lvert t\rvert <\varepsilon \\
       0 &:  \lvert t \rvert >2\varepsilon 
     \end{array}
\right\}
   \end{align}
where also $0\leq \eta_{\varepsilon}\leq 1.$  We claim that there exists a function $q(t):\mathbb{R}\rightarrow \mathbb{R}$ such that $q(0)=0, \ q(t)>0$ if $t>0,$ and furthermore as $\varepsilon\rightarrow 0,$
\begin{align}\label{boundbump}
\lVert q(\varepsilon)\eta_{\varepsilon}(t)\rVert_{C^{3}(\mathbb{R})}\rightarrow 0.
\end{align}
This can be achieved by choosing,
\begin{align}
q(t)=e^{-\frac{1}{t}}.
\end{align}
Pick a coordinate chart on the ball of radius $3\varepsilon$ (choosing $\varepsilon$ small enough so that $p_i$ is the only branch point in the chart) about $p_i$ sending $p_i$ to $0.$  Also, choose a coordinate chart of small radius about the image $f(p_i)$ sending $f(p_i)$ to $0.$  In these coordinates write 
\begin{align}
f:B_{\mathbb{R}^2}(0,r)&\rightarrow B_{\mathbb{R}^n}(0, r)\\
f(u,v)&\mapsto(f^1(u,v),...,f^n(u,v)).
\end{align}
where $r>0$ is some small number on which the coordinate chart is defined.
By the normal form for a branched immersion near a branched point (see definition \ref{def: branch}) we may assume that the first derivatives of $f^1$ and $f^2$ have an isolated zero at $(0,0).$
Next, define a perturbation of $f$ via,
\begin{align}
f_{\varepsilon}(u,v)=f(u,v)+\big(0,0,...,q(\varepsilon)\cdot\eta_{\varepsilon}(\lvert (u,v) \rvert)\cdot u,\ q(\varepsilon)\cdot \eta_{\varepsilon}(\lvert (u,v) \rvert)\cdot v\big).
\end{align}
Note that this is where we use that the dimension of $X$ is at least $4.$
Here, we equip the image of a small ball about $p_i$ with the induced Riemannian metric so that the coordinate chart is a local isometry, and $\lvert (u,v)\rvert$ is the distance from $0$ to $(u,v).$  Observe that $f_{0}=f.$  Furthermore, since $f$ has a branch point at $p_i,$
\begin{align}
\frac{\partial f_\varepsilon}{\partial u}\vert_{(0,0)}&=(0,0,...,q(\varepsilon),0), \\
\frac{\partial f_\varepsilon}{\partial v}\vert_{(0,0)}&=(0,0,...,0,q(\varepsilon)).
\end{align}
The first thing to observe is that $f_{\varepsilon}$ is now an immersion at $(0,0).$  Next, again by the normal form for branched immersions, the projection of $f$ onto the first two factors,
\begin{align}
\pi\circ f(u,v)=(f^1(u,v),f^2(u,v)),
\end{align}
is an immersion on $B(0,r')\backslash \{0\}$ for some $0<r'<r.$  Choosing $2\varepsilon=\frac{r'}{2},$ it follows from the definition of our bump function $\eta_{\varepsilon}$ that $f_{\varepsilon}=f$ on the complement of $B\left(0,\frac{r'}{2}\right).$  But, $f$ is already known to be an immersion on $B(0,r')\backslash \{0\},$ and thus $f_{\varepsilon}$ is an immersion everywhere.  Repeating this process at each branch point produces an immersion $f_{\varepsilon}:\Sigma\rightarrow X/\Gamma$ such that $f_{\varepsilon}=f$ outside of the union of the $2\varepsilon$-neighborhoods of the branch points.  This takes care of all the points except the last.

By \eqref{boundbump}, the convergence
\begin{align}
\lVert f_{\varepsilon}-f\rVert_{C^3(\Sigma)}\rightarrow 0
\end{align}
as $\varepsilon\rightarrow 0$ is immediate.  Let $g_{\varepsilon}$ be the induced metric via the immersion $f_{\varepsilon}.$  On the punctured surface $\hat{\Sigma},$ there is $\text{C}^2$-convergence of the Riemannian metrics
\begin{align}
(\hat{\Sigma},g_{\varepsilon})\rightarrow (\hat{\Sigma},g).
\end{align}
and hence $C^2$-convergence of the associated volume elements,
\begin{align}
dV_{g_{\varepsilon}}\rightarrow dV_{g}.
\end{align}
Since $g$ has sectional curvature less than $-1,$ for all $p\in \hat{\Sigma}$ and for all $r>0$ small enough,
\begin{align}
\text{Vol}_{g}(B_{g}(p,r))\geq \pi r^2+\beta r^4,
\end{align}
for $\beta>0$ some constant.
Since the volume elements converge, it follows that for $\varepsilon>0$ small enough, and for all $p\in\hat{\Sigma},$ and all $r>0$ small enough,
\begin{align}
\text{Vol}_{g_{\varepsilon}}(B_{g_{\varepsilon}}(p,r))\geq \pi r^2 +\beta' r^4,
\end{align}
for some smaller constant $\beta'>0.$
The Taylor expansion of the volume of balls (see \cite{GRA04}) implies that there exists some $\kappa>0$ such that the scalar curvature satisfies $R_{g_{\varepsilon}}(p)<-\kappa$ for all $\varepsilon>0$ small enough and all $p\in \hat{\Sigma}.$  Since we are on a surface, this implies that the sectional curvature of $g_{\varepsilon}$ satisfy,
\begin{align}
K_{g_{\varepsilon}}<-\kappa',
\end{align}
for some $\kappa'>0.$  This completes the proof.
\end{proof}

\bibliography{DDBib}{}
\bibliographystyle{alpha}

\end{document}